\newtheorem{theorem}{Theorem}[section]
\newtheorem{lemma}[theorem]{Lemma}
\newtheorem{proposition}[theorem]{Proposition}
\newtheorem{corollary}[theorem]{Corollary}
\theoremstyle{definition}
\theoremstyle{remark}
\newtheorem{remark}[theorem]{Remark}
\newtheorem{example}[theorem]{Example}
\newcommand\pf{\begin{proof}}
\newcommand\epf{\end{proof}}
\newcommand\C{\mathbb{C}}
\newcommand\yd{\mathcal{YD}}
\newcommand\ext{\mathrm{Ext}}
\newcommand\co{\operatorname{co}}
\DeclareMathOperator{\id}{id}
\numberwithin{equation}{section}
\title{Cohomological dimensions of   universal  \\ cosovereign Hopf algebras}
\author{Julien Bichon}
\address{
Laboratoire de Math\'ematiques,
Universit\'e Blaise Pascal,
Campus universitaire des C\'ezeaux,
3 place Vasarely,
63178~Aubi\`ere Cedex, France}
\email{Julien.Bichon@math.univ-bpclermont.fr}
\subjclass[2010]{16T05, 16E40, 16E10}
\begin{document}

\begin{abstract}
We compute the Hochschild and Gerstenhaber-Schack cohomological dimensions of the universal cosovereign Hopf algebras,  when the matrix of parameters is a generic asymmetry. Our main tools are considerations on the cohomologies of free product of Hopf algebras, and on the invariance of the cohomological dimensions under graded twisting by a finite abelian group.
\end{abstract}

\maketitle


\section{Introduction}

Given an algebra $A$, a well-known and important homological invariant of $A$ is its 
Hochschild cohomological dimension, which serves as a noncommutative analogue of the dimension of an algebraic variety, and is defined by
\begin{align*}
{\rm cd}(A)&= {\rm sup}\{n : \ H^n(A, M) \not=0 \ {\rm for} \ {\rm some} \ A-{\rm bimodule} \ M\}\in \mathbb N \cup \{\infty\} \\
& = {\rm min}\{n : \ H^{n+1}(A, M) =0 \ {\rm for} \ {\rm any} \ A-{\rm bimodule} \ M\}  \\
&={\rm pd}_{_A\mathcal M_A}(A) 
\end{align*}
where $H^*(A,-)$ denotes Hochschild cohomology and ${\rm pd}_{_A\mathcal M_A}(A)$ is the projective dimension of $A$ in the category of $A$-bimodules.

In this paper we will be interested in the case when $A$ is a Hopf algebra, in which case we have as well
$${\rm cd}(A)={\rm pd}_{A}(\mathbb C_\varepsilon)$$
where  ${\rm pd}_{A}(\mathbb C_\varepsilon)$ is the projective dimension of the trivial object $\mathbb C_\varepsilon$ in the category of (say right) $A$-modules (we work throughout the paper over the field of complex numbers). Hopf algebras simultaneously generalize, among other things, discrete groups and linear algebraic groups, and in the classical situations of of Hopf algebras associated to algebraic and discrete groups, the Hochschild cohomological dimensions are as follows.

\begin{enumerate}
\item  If $A=\mathcal O(G)$, the coordinate algebra on a linear algebraic group $G$, it is well-known that ${\rm cd}(\mathcal O(G))= \dim G$, the usual dimension of $G$.
\item If $A= \C\Gamma$, the group algebra of a discrete group $\Gamma$, then ${\rm cd}(\C\Gamma)={\rm cd}_\C(\Gamma)$, the cohomological dimension of $\Gamma$ with coefficients $\C$. This dimension of high importance in geometric group theory, see \cite{br,dw}. We have ${\rm cd}(\C\Gamma)=0$  if and only if $\Gamma$ is finite, and if $\Gamma$ is finitely generated, then ${\rm cd}(\C\Gamma)=1$ if and only if $\Gamma$ contains a free normal subgroup of finite index, see \cite{dun,di80,dw}.
\end{enumerate}

There is also another cohomology theory specific to Hopf algebras, Gerstenhaber-Schack cohomology \cite{gs1,gs2} (the coefficients are Hopf bimodules or Yetter-Drinfeld modules), which has been useful in proving some fundamental results in Hopf algebra theory \cite{ste2, eg}, and serves, similarly as above, to define another cohomological dimension, denoted ${\rm cd}_{\rm GS}(A)$. In \cite{bic,bi16} we proposed to study Gerstenhaber-Schack cohomology in order to get informations on Hochschild cohomology itself. For example it is proved in \cite{bi16} that 
${\rm cd}(A) \leq {\rm cd}_{\rm GS}(A)$ for any Hopf algebra, and it is asked there (Question 1.2) whether the equality always holds, at least in the cosemisimple case (a positive answer is provided in the cosemisimple Kac type case, i.e. when $S^2={\rm id}$). A positive answer would lead to the interesting fact that two Hopf algebras having equivalent monoidal  
categories of comodules have the same Hochschild cohomological dimension (the Gerstenhaber-Schack cohomological dimension being an invariant in such a situation).

The aim of this paper is to discuss the cohomological dimensions of a class of Hopf algebras that we believe to be of particular interest, the universal cosovereign Hopf algebras $H(F)$, $F \in {\rm GL}_n(\mathbb C)$, \cite{bi01} (see Section \ref{sec:ucha} for the precise definition). The universal property of these Hopf algebras make them play, in Hopf algebra theory, a role similar to that of the general linear groups in algebraic group theory, and to that of the free groups in discrete group theory. In particular any finitely generated cosemisimple Hopf algebra is a quotient of one $H(F)$.  

The Hopf algebras $H(F)$ are the algebraic counterparts of the universal compact quantum groups of Van Daele and Wang \cite{vdw}, which have been widely studied in the operator algebraic context of quantum group theory, starting with \cite{ba97}, see e.g. \cite{bra,dec,dfy,vaevan, vervoi}. However, so far, the algebraic properties of the general $H(F)$ have only been analyzed through the study of its category of comodules 
\cite{ba97, bi07, chi12, biri}. 
We provide here a full computation of the cohomological dimensions of $H(F)$, when the matrix $F$ is a generic asymmetry (see Section \ref{sec:ucha} for the definition of these notions): in that case we show that ${\rm cd}(H(F))=3={\rm cd}_{\rm GS}(H(F))$. To prove this result, our starting point is the recent observation \cite{bny} that for $F=E^tE^{-1}$, then $H(F)$ is a graded twisting of the free product $\mathcal B(E)*\mathcal B(E)$, where $\mathcal B(E)$ is the universal Hopf algebra of the bilinear form associated to $E$, introduced by Dubois-Violette and Launer \cite{dvl}. Since the cohomological dimensions of $\mathcal B(E)$ are known \cite{bic,bi16}, the computation is then achieved thanks to the two main general contributions of this paper: on one hand the invariance (for cosemisimple Hopf algebras) of the cohomological dimensions under graded twisting by a finite abelian group, and on the other hand the description of the cohomologies of a free product in terms of the cohomologies of the factors.

Note that for $F=I_n$, we have $H(I_n)=\mathcal O(U_n^+)$, the coordinate algebra on the free unitary quantum group $U_n^+$ \cite{wa95}. Thus, similarly to the case of the free orthogonal quantum group $O_n^+$ studied in \cite{cht, bic} and of the quantum permutation group $S_n^+$ studied in \cite{bi16}, we get that all the cohomological dimensions for $U_n^+$ equal $3$. Therefore the ``free'' quantum groups $O_n^+$, $S_n^+$, $U_n^+$ (see e.g. \cite{basp, fre} for the meaning of free) all have dimension $3$. It would be interesting to know is there is a conceptual reason (maybe representation theoretic, in the spirit of \cite{frwe}) for that.

The paper is organized as follows. In Section \ref{sec:ucha} we recall some basic facts on the universal cosovereign Hopf algebras, and we state our main result on the computation of their cohomological dimensions. Section \ref{sec:ydgs} provides the necessary material on Yetter-Drinfeld modules and Gerstenhaber-Schack cohomology.
In Section \ref{sec:gradim}, we recall, after some considerations on cocentral exact sequences of Hopf algebras and Yetter-Drinfeld modules over them, the construction of the graded twisting of a Hopf algebra, and prove the invariance of the cohomological dimensions under this construction (under suitable assumptions). Section \ref{sec:free} is devoted to the description of Hochschild and Gerstenhaber-Schack cohomologies of free products in terms of the cohomologies of the factors, and finishes the proof of Theorem \ref{thm:comput}. In the final Section \ref{sec:compare}, we come back to the problem of comparing the two cohomological dimensions for cosemisimple Hopf algebras, and provide a slight extension to the positive result in \cite{bi16}, proving that equality holds if $S^4={\rm id}$.

\medskip

\textbf{Notations and conventions.}
We work over $\mathbb C$ (or over any algebraically closed field of characteristic zero). 
We assume that the reader is familiar with the theory of Hopf algebras and their tensor categories of comodules, as e.g. in \cite{kas,ks,mon}.
If $A$ is a Hopf algebra, as usual, $\Delta$, $\varepsilon$ and $S$ stand respectively for the comultiplication, counit and antipode of $A$. We use Sweedler's notations in the standard way. The category of right $A$-comodules is denoted $\mathcal M^A$, the category of right $A$-modules is denoted $\mathcal M_A$, etc...  
The trivial (right) $A$-module is denoted $\C_\varepsilon$.
The set of $A$-module morphisms (resp. $A$-comodule morphisms) between two $A$-modules (resp. two $A$-comodules) $V$ and $W$ is denoted ${\rm Hom}_A(V,W)$ (resp. ${\rm Hom}^A(V,W)$).

\section{Universal cosovereign Hopf algebras}\label{sec:ucha}

We fix $n \geq 2$, and let $F \in {\rm GL}_n(\mathbb C)$.
Recall \cite{bi01} that the algebra $H(F)$ is the algebra
generated by
 $(u_{ij})_{1 \leq i,j \leq n}$ and
 $(v_{ij})_{1 \leq i,j \leq n}$, with relations:
$$ {u} {v^t} = { v^t} u = I_n ; \quad {vF} {u^t} F^{-1} = 
{F} {u^t} F^{-1}v = I_n,$$
where $u= (u_{ij})$, $v = (v_{ij})$ and $I_n$ is
the identity $n \times n$ matrix. The algebra
$H(F)$ has a  Hopf algebra structure
defined by
\begin{gather*}
\Delta(u_{ij}) = \sum_k u_{ik} \otimes u_{kj}, \quad
\Delta(v_{ij}) = \sum_k v_{ik} \otimes v_{kj}, \\
\varepsilon (u_{ij}) = \varepsilon (v_{ij}) = \delta_{ij}, \quad 
S(u) = {v^t}, \quad S(v) = F { u^t} F^{-1}.
\end{gather*}

The universal property of the Hopf algebras $H(F)$ \cite{bi01} shows that they
play, in the category of Hopf algebras,
a role that is similar to the one of $\mathcal O({\rm GL}_n(\C))$  
in the category of commutative Hopf algebras:  any 
finitely generated Hopf algebra having all its finite-dimensional comodules
isomorphic to their bidual (in particular any finitely generated cosemisimple Hopf algebra) is a quotient of $H(F)$ for some $F$.
Hence one might say that they correspond to ``universal'' quantum groups.

When  $F$ is a positive matrix, the Hopf algebra 
$H(F)$ is the canonical  Hopf $*$-algebra associated to  Van Daele and Wang's universal compact quantum groups \cite{vdw}. 

The category of comodules over $H(F)$ has been studied in \cite{ba97,bi07,chi12,biri}. 
In order to recall the characterization of  the cosemisimplicity of $H(F)$, we need some vocabulary.
A matrix $F \in {\rm GL}_n(\mathbb C)$ is said to be

$\bullet$ normalizable if ${\rm tr}(F) \not= 0$ and  $ {\rm tr} (F^{-1})\not=0$ or ${\rm tr}(F)=0={\rm tr} (F^{-1})$;

$\bullet$ generic if it is normalizable and the solutions of the equation
$q^2 -\sqrt{{\rm tr}(F){\rm tr}(F^{-1})}q +1 = 0$ are generic, i.e. are not roots of unity of order $\geq 3$ (this property does not depend on the choice of the square root);

$\bullet$ an asymmetry if there exists $E \in {\rm GL}_n(\mathbb C)$ such that $F=E^tE^{-1}$ (the terminology comes from the theory of bilinear forms, see \cite{rie}).

For $q \in \C^*$, we denote by $F_q$ the matrix 
$$F_q =  \begin{pmatrix} q^{-1} & 0 \\
                          0 & q \\
       \end{pmatrix}$$
and by $H(q)$ the Hopf algebra $H(F_q)$. The matrix $F_q$ is an asymmetry.
 
The following results are shown in \cite{bi07}.

$\bullet$ If $F$ is normalizable, we have an equivalence between the tensor categories of comodules
$$\mathcal M^{H(F)} \simeq^\otimes \mathcal M^{H(q)}$$
where $q$ is any solution of the equation $q^2 -\sqrt{{\rm tr}(F){\rm tr}(F^{-1})}q +1 = 0$.

$\bullet$ The Hopf algebra $H(F)$ is cosemisimple if and only if $F$ is generic.

Moreover, the simple comodules can be naturally labeled by the free monoid $\mathbb N * \mathbb N$ \cite{ba97, bi07, chi12}, with an explicit model for these comodules given in \cite{biri}.

The aim of this paper is to prove the following result.

\begin{theorem}\label{thm:comput}
 Let $F \in {\rm GL}_n(\C)$, $n\geq 2$.
\begin{enumerate}
 \item If $F$ is an asymmetry, then  ${\rm cd}(H(F))=3$.
\item If $F$ is generic, then ${\rm cd}_{\rm GS}(H(F))=3$.
\end{enumerate}
In particular, if $F$ is a generic asymmetry, we have ${\rm cd}(H(F))=3={\rm cd}_{\rm GS}(H(F))$.
\end{theorem}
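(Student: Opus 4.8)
The plan is to reduce the computation to the Dubois-Violette--Launer algebra $\mathcal{B}(E)$, whose cohomological dimensions are known to satisfy ${\rm cd}(\mathcal{B}(E)) = 3 = {\rm cd}_{\rm GS}(\mathcal{B}(E))$ for every $E \in {\rm GL}_n(\C)$ with $n \geq 2$ by \cite{bic, bi16} (the upper bound coming from an explicit length-$3$ free resolution of $\C_\varepsilon$, both in the category of modules and in that of Yetter--Drinfeld modules). The bridge is the recent observation of \cite{bny}: writing $F = E^t E^{-1}$, the Hopf algebra $H(F)$ is a graded twisting of the free product $\mathcal{B}(E) * \mathcal{B}(E)$ by a finite abelian group $\Gamma$ (one may take $\Gamma = \mathbb{Z}/2\mathbb{Z}$). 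Beyond these, the proof rests on the two general results to be proved in Sections \ref{sec:gradim} and \ref{sec:free}: (i) graded twisting by a finite abelian group leaves ${\rm cd}$ unchanged, and leaves ${\rm cd}_{\rm GS}$ unchanged provided the Hopf algebra is cosemisimple; and (ii) for nontrivial Hopf algebras $A$ and $B$ with ${\rm cd}(A), {\rm cd}(B) \geq 1$ one has ${\rm cd}(A * B) = \max({\rm cd}(A), {\rm cd}(B))$, with the analogous identity for ${\rm cd}_{\rm GS}$.

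Granting these, part (1) is immediate: for $F = E^t E^{-1}$ an asymmetry, putting $L = \mathcal{B}(E) * \mathcal{B}(E)$ we get ${\rm cd}(H(F)) = {\rm cd}(L) = \max({\rm cd}(\mathcal{B}(E)), {\rm cd}(\mathcal{B}(E))) = 3$, the first equality being the invariance under graded twisting (for ${\rm cd}$ only the finiteness of $\Gamma$ is needed), the second the free product formula, the last \cite{bic, bi16}. For part (2), I would first reduce a generic $F$ to a generic asymmetry. A generic matrix is normalizable, so the result of \cite{bi07} recalled above gives $\mathcal{M}^{H(F)} \simeq^\otimes \mathcal{M}^{H(q)} = \mathcal{M}^{H(F_q)}$, where $q$ solves $q^2 - \sqrt{{\rm tr}(F){\rm tr}(F^{-1})}\,q + 1 = 0$; this $q$ is not a root of unity of order $\geq 3$, and a direct computation with the defining equation (using ${\rm tr}(F_q) = {\rm tr}(F_q^{-1}) = q + q^{-1}$) then shows that $F_q$ is itself generic, while $F_q$ is an asymmetry by construction. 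Since ${\rm cd}_{\rm GS}$ depends only on the monoidal category of comodules, ${\rm cd}_{\rm GS}(H(F)) = {\rm cd}_{\rm GS}(H(F_q))$, so it suffices to handle a generic asymmetry. Writing again $F_q = E^t E^{-1}$ and $L = \mathcal{B}(E) * \mathcal{B}(E)$: genericity of $F_q$ makes $H(F_q)$ cosemisimple, hence so is $L$ (graded twisting preserves cosemisimplicity), so the cosemisimple case of the graded-twist invariance applies, and ${\rm cd}_{\rm GS}(H(F_q)) = {\rm cd}_{\rm GS}(L) = \max({\rm cd}_{\rm GS}(\mathcal{B}(E)), {\rm cd}_{\rm GS}(\mathcal{B}(E))) = 3$ by the free product formula and \cite{bic, bi16}. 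The last assertion is then just the conjunction of (1) and (2).

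The technical heart --- and the main obstacle --- is the pair of general results (i) and (ii). For (ii), the Hochschild statement can be reached by exploiting that the underlying algebra of a free product of Hopf algebras is the free product (coproduct) of the underlying augmented algebras, which yields a Mayer--Vietoris-type long exact sequence computing ${\rm Ext}^\bullet(\C_\varepsilon, -)$ from the short exact sequence $0 \to A*B \to (A*B)\otimes_A \C_\varepsilon \oplus (A*B)\otimes_B \C_\varepsilon \to \C_\varepsilon \to 0$ together with faithful flatness of $A*B$ over each factor; the genuinely delicate point is the Gerstenhaber--Schack analogue, where, there being no naive ``free product'' of the ambient tensor categories, one must build directly a suitable Yetter--Drinfeld resolution of the counit over $A*B$ and relate its homology to those over $A$ and over $B$. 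For (i), the Hochschild invariance should come from the fact that a graded twist and the original Hopf algebra share the same $\Gamma$-degree-zero Hopf subalgebra and fit into cocentral exact sequences whose cokernel is the cosemisimple Hopf algebra $\C\Gamma$, so that a Hochschild--Serre-type spectral sequence forces the dimensions to coincide; cosemisimplicity is needed precisely to run this argument with Yetter--Drinfeld modules for the Gerstenhaber--Schack version --- which is why part (1) holds for all asymmetries whereas part (2) is stated for generic $F$. No separate lower bound for $H(F)$ is required: the invariance statements yield equalities, and the value $3$ (not merely an upper bound) is already carried by $\mathcal{B}(E)$.
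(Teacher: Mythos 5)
Your proposal follows essentially the same route as the paper: reduce to the free product $\mathcal B(E)*\mathcal B(E)$ via the graded-twisting isomorphism of \cite{bny} for $F=E^tE^{-1}$, invoke the invariance of the cohomological dimensions under graded twisting (Section \ref{sec:gradim}) and the free product formula (Section \ref{sec:free}) together with ${\rm cd}(\mathcal B(E))=3={\rm cd}_{\rm GS}(\mathcal B(E))$ from \cite{bic,bi16}, and treat a generic non-asymmetric $F$ by passing to the monoidally equivalent $H(F_q)$ with $F_q$ a generic asymmetry and using the monoidal invariance of ${\rm cd}_{\rm GS}$. Your Mayer--Vietoris formulation of the free product step is just a repackaging of the paper's decomposition $(A*B)^+\simeq (A^+\otimes_A(A*B))\oplus(B^+\otimes_B(A*B))$, so the argument is correct and matches the paper.
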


See the next section for the definition of ${\rm cd}_{\rm GS}$. We will proceed by using the fact, from  \cite{bny}, that when $F$ is an asymmetry, it is a graded twisting of a free product of Hopf algebras whose cohomological dimension are known. We therefore have two main tasks:
\begin{enumerate}
 \item relate the cohomological dimensions of  (cosemisimple) Hopf algebras that are graded twisting of each other (this is done in Section \ref{sec:gradim});
\item describe the cohomological dimensions of a free product of Hopf algebras in terms of the cohomological dimensions of the factors (this is done in Section \ref{sec:free}).
\end{enumerate}




\section{Yetter-Drinfeld modules and Gerstenhaber-Schack cohomology}\label{sec:ydgs}

In this section we recollect the basic facts on Yetter-Drinfeld modules and Gerstenhaber-Schack cohomology, and discuss restriction and induction for Yetter-Drinfeld modules.
 Let $A$ be a Hopf algebra.

\subsection{Yetter-Drinfeld modules} Recall that a (right-right) Yetter-Drinfeld
module over $A$ is a right $A$-comodule and right $A$-module $V$
satisfying the condition, $\forall v \in V$, $\forall a \in A$, 
$$(v \cdot a)_{(0)} \otimes  (v \cdot a)_{(1)} =
v_{(0)} \cdot a_{(2)} \otimes S(a_{(1)}) v_{(1)} a_{(3)}$$
The category of Yetter-Drinfeld modules over $A$ is denoted $\yd_A^A$:
the morphisms are the $A$-linear $A$-colinear maps.
Endowed with the usual tensor product of 
modules and comodules, it is a tensor category, with unit the trivial Yetter-Drinfeld module, denoted $\C$.

We now discuss some important constructions of Yetter-Drinfeld modules (left-right versions of these constructions were first given in \cite{camizh97}, see \cite{shst} as well, in the context of Hopf bimodules).

Let $V$ be a right $A$-comodule. The Yetter-Drinfeld module $V \boxtimes A$ is defined as follows \cite{bic}. 
As a vector space $V \boxtimes A=V \otimes A$, the right module structure is given by multiplication on the right, and the right coaction $\alpha_{V \boxtimes A}$ is defined by
$$ \alpha_{V \boxtimes A}( v \otimes a)=v_{(0)}  \otimes a_{(2)} \otimes S(a_{(1)})v_{(1)}a_{(3)}$$
The coadjoint Yetter-Drinfeld module is $A_{\rm coad} = \mathbb C \boxtimes A$.

A Yetter-Drinfeld module is said to be free if it is isomorphic to $V\boxtimes A$ for some comodule $V$, and
 is said to be relative projective if it is a direct summand of a free Yetter-Drinfeld module.
If $A$ is cosemisimple, then the projective objects in the category $\yd_A^A$ are precisely the relative projective Yetter-Drinfeld modules, see \cite[Proposition 4.2]{bi16}, the abelian category $\yd_A^A$ has enough projectives, each object having a resolution by free Yetter-Drinfeld modules \cite[Corollary 3.4]{bic}.

\subsection{Gerstenhaber-Schack cohomology}
Let $V$ be a Yetter-Drinfeld module over $A$.
The Gerstenhaber-Schack cohomology of $A$ with coefficients in $V$, that we denote $H_{\rm GS}^*(A,V)$, was introduced in \cite{gs1,gs2} by using an explicit bicomplex. In fact Gerstenhaber-Schack used Hopf bimodules instead of Yetter-Drinfeld modules to define their cohomology, but in view of the equivalence between Hopf bimodules and Yetter-Drinfeld modules \cite{sc94}, we  work with the simpler framework of Yetter-Drinfeld modules.
A special instance of Gerstenhaber-Schack cohomology is bialgebra cohomology, given by 
$H_b^*(A)=H_{\rm GS}^*(A,\mathbb C)$.

As examples, the bialgebra cohomologies of $\C\Gamma$ (for a discrete group $\Gamma$) and 
of $\mathcal O(G)$ (for a connected reductive algebraic group $G$) are described in \cite{pw}. Some finite-dimensional examples are also computed in \cite{tai07}.

A key result, due to Taillefer \cite{tai04}, characterizes Gerstenhaber-Schack cohomology as  an $\ext$-functor:
$$H_{\rm GS}^*(A,V)\simeq \ext^*_{\yd_A^A}(\mathbb C, V)$$
We will use this description as a definition. Note that the category $\yd_A^A$ has enough injective objects \cite{camizh97, tai04}, so the above ${\rm Ext}$ spaces can be studied using injective resolutions of $V$. 

The Gerstenhaber-Schack cohomological dimension of a Hopf  algebra $A$ is
defined to be 
$${\rm cd}_{\rm GS}(A)= {\rm sup}\{n : \ H_{\rm GS}^n(A, V) \not=0 \ {\rm for} \ {\rm some} \ V \in \yd_A^A\}\in \mathbb N \cup \{\infty\}$$  
The following facts were established in \cite{bic,bi16}.

$\bullet$ ${\rm cd}(A) \leq {\rm cd}_{\rm GS}(A)$, with equality if $A$ is cosemisimple of Kac type (i.e. $S^2={\rm id}$).
 
$\bullet$ If $A$, $B$ are Hopf algebras with $\mathcal M^A \simeq^\otimes \mathcal M^B$ (the tensor categories of comodules are equivalent), then $\max({\rm cd}(A),{\rm cd}(B))\leq {\rm cd}_{\rm GS}(A)={\rm cd}_{\rm GS}(B)$.

$\bullet$ If $A$ is co-Frobenius (in particular if $A$ is cosemisimple), so that $\yd_A^A$ has enough projective objects, then ${\rm cd}_{\rm GS}(A)= {\rm pd}_{\yd_A^A}(\C)$.

\subsection{Restriction and induction for Yetter-Drinfeld modules}\label{subsec:res}
We now discuss the restriction and induction process for Yetter-Drinfeld modules, having in mind applications to Gerstenhaber-Schack cohomology. The considerations in this subsection (construction of a pair of adjoint functors) are special instances of those in  \cite[Section 2.5]{camizh}, but we give the detailed construction, on one hand for the sake of completeness, and on the other hand because it is probably quicker to write them down directly than to translate from the language of entwined modules of \cite{camizh}. 

Let $B \subset A$ be a Hopf subalgebra. For an $A$-comodule $X$, we put 
$$X^{(B)}=\{x \in X \ | \ x_{(0)} \otimes x_{(1)} \in X \otimes B\}$$
It is clear that the $A$-comodule structure on $X$ induces a $B$-comodule structure on $X^{(B)}$, and that this construction produces a functor
\begin{align*}
 \mathcal M^A & \longrightarrow \mathcal M^B \\
X & \longmapsto X^{(B)}
\end{align*}
This functor is left exact, and
we will say that $B \subset A$ is a coflat if this functor is exact (this agrees with the usual terminology, since the above functor is isomorphic with the functor $-\square_A B$). For example $B \subset A$ is coflat when $A$ is cosemisimple. 

\begin{proposition}\label{prop:funyd2}
 Let $B\subset A$ be a Hopf subalgebra, 
and  let $X$ be an object in $\yd_A^A$. Then $X^{(B)}$ is a sub-$B$-module of $X$, so that  $X^{(B)}$ is an object in $\yd_B^B$. The assignment 
\begin{align*}
 \yd^A_A & \longrightarrow \yd^B_B \\
X & \longmapsto X^{(B)}
\end{align*}
defines a linear functor, that we call the restriction functor, which is exact if $B \subset A$ is  coflat.
\end{proposition}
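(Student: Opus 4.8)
The plan is to verify the three assertions of the statement in turn: stability of $X^{(B)}$ under the $B$-action together with the Yetter-Drinfeld axiom over $B$, functoriality and linearity of the assignment, and finally exactness under the coflatness hypothesis. Recall from the discussion preceding the proposition that $X^{(B)}$ is already a $B$-subcomodule of $X$, the $B$-coaction being the restriction of the $A$-coaction; in particular, for $x \in X^{(B)}$ one has $x_{(0)} \otimes x_{(1)} \in X^{(B)} \otimes B$.

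First I would show that $X^{(B)}$ is a sub-$B$-module of $X$. Take $x \in X^{(B)}$ and $b \in B$. The Yetter-Drinfeld axiom for $X$ over $A$ gives
$$(x \cdot b)_{(0)} \otimes (x \cdot b)_{(1)} = x_{(0)} \cdot b_{(2)} \otimes S(b_{(1)}) x_{(1)} b_{(3)}.$$
Since $B$ is a Hopf subalgebra, it is stable under $\Delta$ and under $S$, so $b_{(1)}, b_{(2)}, b_{(3)} \in B$; combined with $x_{(1)} \in B$ this shows $S(b_{(1)}) x_{(1)} b_{(3)} \in B$, whence the right-hand side lies in $X \otimes B$ and $x \cdot b \in X^{(B)}$. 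Thus $X^{(B)}$ is a right $B$-submodule of $X$, equipped with the restricted $B$-action and $B$-coaction. The Yetter-Drinfeld axiom for $X^{(B)}$ over $B$ is then exactly the displayed identity read inside $X^{(B)} \otimes B$, so it holds automatically, and $X^{(B)} \in \yd_B^B$.

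Next, functoriality and linearity are straightforward: if $f \colon X \to Y$ is a morphism in $\yd_A^A$, then for $x \in X^{(B)}$ one has $f(x)_{(0)} \otimes f(x)_{(1)} = f(x_{(0)}) \otimes x_{(1)} \in Y \otimes B$ by $A$-colinearity of $f$, so $f$ restricts to a $B$-linear $B$-colinear map $X^{(B)} \to Y^{(B)}$; this assignment evidently preserves identities, composition, and the additive structure on Hom-spaces. For exactness when $B \subset A$ is coflat, I would observe that the forgetful functor $\yd_B^B \to \mathcal M^B$ is faithful and exact (kernels and cokernels in $\yd_B^B$ are computed on underlying comodules), hence reflects exact sequences, and that the restriction functor followed by this forgetful functor coincides with the forgetful functor $\yd_A^A \to \mathcal M^A$ followed by $X \mapsto X^{(B)} \colon \mathcal M^A \to \mathcal M^B$. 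Given an exact sequence in $\yd_A^A$, forget it to an exact sequence in $\mathcal M^A$, apply the exact functor $X \mapsto X^{(B)}$ to get an exact sequence in $\mathcal M^B$, and conclude that the corresponding sequence in $\yd_B^B$ is exact.

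None of these steps presents a serious difficulty; the only point requiring a little care is the first one, where one must use precisely that $B$ is a sub-Hopf-algebra — closed under comultiplication \emph{and} antipode — to keep the right-hand side of the Yetter-Drinfeld identity inside $X \otimes B$, together with the already-established fact that the $B$-coaction on $X^{(B)}$ takes values in $X^{(B)} \otimes B$.
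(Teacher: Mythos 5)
Your proposal is correct and follows essentially the same route as the paper: the key step is the identical computation $(x\cdot b)_{(0)} \otimes (x\cdot b)_{(1)} = x_{(0)}\cdot b_{(2)} \otimes S(b_{(1)})x_{(1)}b_{(3)} \in X \otimes B$, and the paper dismisses functoriality and exactness as immediate, which you simply spell out. Nothing to correct.
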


\begin{proof}
 For $x \in X^{(B)}$ and $b \in B$, we have
$$(x\cdot b)_{(0)} \otimes (x\cdot b)_{(1)}= x_{(0)} \cdot b_{(2)} \otimes S(b_{(1)})x_{(1)}b_{(3)} \in X \otimes B$$
and hence $X^{(B)}$ is a sub-$B$-module of $X$. The other assertions are immediate.
\end{proof}

We have an induction functor as well.

\begin{proposition}\label{prop:indyd}
 Let $B \subset A$ be a Hopf subalgebra. Then for any $V \in \yd_B^B$, the vector space $V\otimes_B A$ admits a natural Yetter-Drinfeld module structure over $A$, whose $A$-module structure is given by multiplication on the right, and whose $A$-comodule structure is given by the map
$$v \otimes_B a \mapsto v_{(0)} \otimes_B a_{(2)} \otimes S(a_{(1)}) v_{(1)} a_{(3)}$$
This construction defines a linear functor
\begin{align*}
 \yd_B^B &\longrightarrow \yd_A^A \\
V &\longmapsto V\otimes_B A
\end{align*}
that we call the induction functor.
\end{proposition}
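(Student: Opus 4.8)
The plan is a hands-on verification in Sweedler notation. The one genuinely delicate point — and the step I expect to be the main obstacle — is that the proposed coaction
$\alpha\colon v\otimes_B a \mapsto v_{(0)}\otimes_B a_{(2)}\otimes S(a_{(1)})v_{(1)}a_{(3)}$
is \emph{well defined} on the quotient $V\otimes_B A$ of $V\otimes A$, i.e.\ that $\alpha\big((v\cdot b)\otimes_B a\big)=\alpha\big(v\otimes_B ba\big)$ for all $b\in B$, $v\in V$, $a\in A$. First I would expand the left-hand side using the Yetter-Drinfeld condition for $V$ over $B$, namely $(v\cdot b)_{(0)}\otimes (v\cdot b)_{(1)}=v_{(0)}\cdot b_{(2)}\otimes S(b_{(1)})v_{(1)}b_{(3)}$, to obtain $v_{(0)}\cdot b_{(2)}\otimes_B a_{(2)}\otimes S(a_{(1)})S(b_{(1)})v_{(1)}b_{(3)}a_{(3)}$, and then move $b_{(2)}\in B$ across $\otimes_B$. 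For the right-hand side I would use that $B$ is a subcoalgebra to write $\Delta^2(ba)=b_{(1)}a_{(1)}\otimes b_{(2)}a_{(2)}\otimes b_{(3)}a_{(3)}$, together with $S(b_{(1)}a_{(1)})=S(a_{(1)})S(b_{(1)})$; this yields exactly $v_{(0)}\otimes_B b_{(2)}a_{(2)}\otimes S(a_{(1)})S(b_{(1)})v_{(1)}b_{(3)}a_{(3)}$, so the two expressions coincide. Here one uses crucially that $v_{(1)}\in B\subset A$ and $S(B)\subseteq B$, so that every term appearing is a legitimate element of $A$; this is the only place where all the hypotheses on the pair $B\subset A$ and on $V$ enter in an essential way.

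Once well-definedness is secured, the remaining verifications are routine computations of exactly the same shape as those showing that $V\boxtimes A$ is a Yetter-Drinfeld module. Counitality is immediate from $\varepsilon(S(a_{(1)})v_{(1)}a_{(3)})=\varepsilon(a_{(1)})\varepsilon(v_{(1)})\varepsilon(a_{(3)})$ and the counit axiom for the $B$-comodule $V$. Coassociativity, $(\alpha\otimes\id)\alpha=(\id\otimes\Delta)\alpha$, follows from coassociativity of $\Delta$ and the $B$-comodule axiom for $V$, the only care being to track the reversal of the antipode legs when $\Delta$ is applied to $S(a_{(1)})$. For the Yetter-Drinfeld compatibility, writing $x=v\otimes_B a$ and using $x\cdot a'=v\otimes_B aa'$, $\Delta^2(aa')=a_{(1)}a'_{(1)}\otimes a_{(2)}a'_{(2)}\otimes a_{(3)}a'_{(3)}$ and $S(a_{(1)}a'_{(1)})=S(a'_{(1)})S(a_{(1)})$, both sides of the required identity collapse to $v_{(0)}\otimes_B a_{(2)}a'_{(2)}\otimes S(a'_{(1)})S(a_{(1)})v_{(1)}a_{(3)}a'_{(3)}$.

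Finally, the right $A$-module structure on $V\otimes_B A$ is the standard one induced by right multiplication on $A$ (well defined since $A$ is a $(B,A)$-bimodule and $V$ a right $B$-module), and its compatibility with $\alpha$ is precisely the Yetter-Drinfeld condition just checked. Functoriality and linearity are then immediate: given a morphism $f\colon V\to W$ in $\yd_B^B$, the map $f\otimes_B\id_A$ is visibly right $A$-linear, and it is $A$-colinear because $f$ enters the coaction formula only through $f(v)_{(0)}=f(v_{(0)})$ and $f(v)_{(1)}=v_{(1)}$, which hold since $f$ is $B$-colinear. This produces the claimed functor $\yd_B^B\to\yd_A^A$.
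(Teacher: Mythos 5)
Your proposal is correct and follows essentially the same route as the paper: the paper's proof likewise isolates the well-definedness of the coaction on $V\otimes_B A$ as the only nontrivial point, verifies it by the identical computation (Yetter-Drinfeld condition for $V$ over $B$, sliding $b_{(2)}$ across $\otimes_B$, and anti-multiplicativity of $S$), and leaves the remaining comodule, Yetter-Drinfeld and functoriality checks as straightforward. Your additional sketch of those routine verifications is accurate and consistent with what the paper omits.
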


\begin{proof}
 We have, for $v \in V$, $b \in B$ and $a \in A$,
\begin{align*}
(v\cdot b)_{(0)} \otimes_B a_{(2)} \otimes S(a_{(1)}) (v\cdot b)_{(1)} a_{(3)}& =
v_{(0)}\cdot b_{(2)} \otimes_B a_{(2)} \otimes S(a_{(1)})S(b_{(1)}) v_{(1)} b_{(3)}a_{(3)}\\
& = v_{(0)} \otimes_B b_{(2)}a_{(2)} \otimes S(b_{(1)}a_{(1)}) v_{(1)} b_{(3)}a_{(3)}
\end{align*}
and this shows that above map is well-defined. It is then straightforward to check that this indeed defines a comodule structure on $V\otimes_B A$, and a Yetter-Drinfeld module structure, and that we get the announced functor.
\end{proof}

We now observe that the functors of Propositions \ref{prop:funyd2} and \ref{prop:indyd} form a pair of adjoint functors, see \cite[Section 2.5]{camizh}.

\begin{proposition}\label{prop:adjyd}
 Let $B \subset A$ be a Hopf subalgebra. We have for any $V \in \yd_B^B$ and any $X \in \yd_A^A$, natural isomorphisms
$${\rm Hom}_{\yd_A^A}(V \otimes_B A, X) \simeq {\rm Hom}_{\yd_B^B}(V, X^{(B)})$$ 
If moreover $B \subset A$ is coflat and $A$ is flat as a left $B$-module, we have, for any $n\geq 0$, natural isomorphisms
$${\rm Ext}_{\yd_A^A}^n(V \otimes_B A, X) \simeq {\rm Ext}_{\yd_B^B}^n(V, X^{(B)})$$ 
\end{proposition}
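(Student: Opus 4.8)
The plan is to establish the adjunction at the level of $\Hom$ first and then derive the $\Ext$ statement by a standard derived-functor argument, using that both functors are exact on the relevant categories under the stated hypotheses. For the $\Hom$-level isomorphism, I would proceed exactly as one does for the classical tensor--hom adjunction between $-\otimes_B A$ and the restriction functor for modules, and then check that the extra structure (the $A$-coaction on one side, the $B$-coaction on the other) is matched up correctly. Concretely, given an $A$-linear $A$-colinear map $f\colon V\otimes_B A\to X$, one restricts along $v\mapsto v\otimes_B 1$ to get a map $\bar f\colon V\to X$; the colinearity of $f$ together with the formula for the coaction on $V\otimes_B A$ (Proposition \ref{prop:indyd}) forces $\bar f(v)_{(0)}\otimes \bar f(v)_{(1)}=f(v_{(0)}\otimes_B 1)\otimes v_{(1)}\in X\otimes B$, so that $\bar f$ lands in $X^{(B)}$ and is $B$-colinear; $B$-linearity of $\bar f$ is immediate from $A$-linearity of $f$. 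Conversely, given a morphism $g\colon V\to X^{(B)}$ in $\yd_B^B$, one sets $\tilde g(v\otimes_B a)=g(v)\cdot a$; this is well defined because $g$ is $B$-linear, it is $A$-linear by construction, and it is $A$-colinear because $g(v)\in X^{(B)}$ so $g(v)_{(1)}\in B$ and one can run the Yetter--Drinfeld coaction computation using only the $B$-comodule structure of $V$. Naturality in both variables is routine.

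For the $\Ext$ statement, the strategy is the usual one for a pair of adjoint functors where the left adjoint is exact. The hypothesis that $A$ is flat as a left $B$-module guarantees that the induction functor $-\otimes_B A\colon \yd_B^B\to\yd_A^A$ is exact (the underlying map is just $-\otimes_B A$ on vector spaces, and the Yetter--Drinfeld structure is transported along a linear isomorphism of functors), and the coflatness of $B\subset A$ guarantees, by Proposition \ref{prop:funyd2}, that the restriction functor $X\mapsto X^{(B)}$ is exact. Now fix $V\in\yd_B^B$ and take a projective resolution $P_\bullet\to V$ in $\yd_B^B$ — this exists because, under the running assumptions ($A$ cosemisimple in the applications, but more generally co-Frobenius suffices, and in any case $\yd_B^B$ has enough projectives as recalled after the definition of relative projectivity), or alternatively one may use a resolution by free Yetter--Drinfeld modules. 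Applying the exact functor $-\otimes_B A$ yields a complex $P_\bullet\otimes_B A\to V\otimes_B A$ which is still exact, and whose terms are projective in $\yd_A^A$: indeed the left adjoint of an adjunction whose right adjoint is exact preserves projectives, so each $P_i\otimes_B A$ is projective. Hence $P_\bullet\otimes_B A$ is a projective resolution of $V\otimes_B A$ in $\yd_A^A$. Then
\begin{align*}
\Ext_{\yd_A^A}^n(V\otimes_B A, X)
&= H^n\bigl(\Hom_{\yd_A^A}(P_\bullet\otimes_B A, X)\bigr)\\
&\simeq H^n\bigl(\Hom_{\yd_B^B}(P_\bullet, X^{(B)})\bigr)
= \Ext_{\yd_B^B}^n(V, X^{(B)}),
\end{align*}
where the middle isomorphism is the first part of the Proposition applied degreewise, together with naturality so that it is compatible with the differentials.

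The main obstacle, and the only place real care is needed, is the verification in the $\Hom$-level part that the maps $f\mapsto\bar f$ and $g\mapsto\tilde g$ land in the correct categories and are mutually inverse — in particular, checking that $\bar f$ actually takes values in $X^{(B)}$ (rather than merely in $X$) and is $B$-colinear, which is where the specific formula for the $A$-coaction on $V\otimes_B A$ is used in an essential way, and symmetrically checking that $\tilde g$ is $A$-colinear, which relies on $g(V)\subseteq X^{(B)}$ so that the antipode terms $S(b_{(1)})\cdots b_{(3)}$ appearing in the Yetter--Drinfeld coaction stay inside $B$. These are the computations that \cite{camizh} carries out in the language of entwined modules; here they are short but must be done with Sweedler notation in hand. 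Everything else — exactness of the two functors, preservation of projectives by the left adjoint, and the dimension-shifting/balancing argument for $\Ext$ — is formal once the adjunction is in place.
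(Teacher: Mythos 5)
Your \(\Hom\)-level argument is exactly the paper's: restrict along \(v\mapsto v\otimes_B 1\), observe via the coaction formula at \(a=1\) that the image lands in \(X^{(B)}\), and invert by \(g\mapsto (v\otimes_B a\mapsto g(v)\cdot a)\). That part is correct and complete.

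For the \(\Ext\) statement, however, there is a genuine gap. You compute \(\Ext^n_{\yd_A^A}(V\otimes_B A,X)\) from a projective resolution of \(V\) in \(\yd_B^B\), and you justify its existence by asserting that \(\yd_B^B\) ``in any case'' has enough projectives, or that one can use free Yetter--Drinfeld resolutions. Neither claim is available at the stated level of generality: the paper only recalls that \(\yd_B^B\) has enough projectives (and that free Yetter--Drinfeld modules are projective, rather than merely relative projective) when \(B\) is cosemisimple, or more generally co-Frobenius. The proposition assumes only that \(B\subset A\) is a Hopf subalgebra with \(B\subset A\) coflat and \(A\) flat as a left \(B\)-module, so your resolution need not exist. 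The paper's proof runs the dual argument on the other side of the adjunction precisely to avoid this: \(\yd_A^A\) always has enough injectives (as recalled in Section 3.2, citing \cite{camizh97,tai04}), the exactness of the left adjoint \(-\otimes_B A\) (flatness) forces the right adjoint \(X\mapsto X^{(B)}\) to preserve injectives, and the exactness of that restriction functor (coflatness) turns an injective resolution \(0\to X\to I_\bullet\) in \(\yd_A^A\) into an injective resolution \(0\to X^{(B)}\to I_\bullet^{(B)}\) in \(\yd_B^B\); the adjunction then identifies the two \(\Hom\)-complexes. Your argument is correct whenever \(\yd_B^B\) does have enough projectives --- in particular in all the cosemisimple applications later in the paper --- but to prove the proposition as stated you should replace the projective resolution of \(V\) by an injective resolution of \(X\) and use preservation of injectives by the right adjoint instead of preservation of projectives by the left adjoint.
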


\begin{proof}
It is a direct verification to check that for $f \in  {\rm Hom}_{\yd_A^A}(V \otimes_B A, X)$, the map
$f_0 : V \rightarrow X$ defined by $f_0(v)= f(v\otimes 1)$ has values into $X^{(B)}$, and is a morphism of Yetter-Drinfeld modules over $B$. We get a (natural) map
\begin{align*}
 {\rm Hom}_{\yd_A^A}(V \otimes_B A, X) & \longrightarrow {\rm Hom}_{\yd_B^B}(V, X^{(B)}) \\
f & \longmapsto f_0, \ f_0(v)= f(v\otimes 1)
\end{align*}
which is easily seen to be an isomorphism, and hence we have a pair of adjoint functors.

The assumptions that $B \subset A$ is coflat and that $A$ is flat as a left $B$-module are precisely that our pair of adjoint functors is formed by exact functors, and hence the restriction functor $\yd_A^A \rightarrow \yd_B^B$ preserve injective objects. Starting now from an injective resolution 
$$0 \rightarrow X \rightarrow I_0 \rightarrow I_1 \rightarrow \cdots$$
in $\yd_A^A$, we get an injective resolution 
$$0 \rightarrow X^{(B)} \rightarrow I_0^{(B)} \rightarrow I_1^{(B)} \rightarrow \cdots$$
in $\yd_B^B$, and the adjunction property gives an isomorphism of complexes
$${\rm Hom}_{\yd_A^A}(V \otimes_B A, I_*) \simeq {\rm Hom}_{\yd_B^B}(V, I_*^{(B)})$$ 
The ${\rm Ext}$-spaces in the statement are the cohomologies of these complexes.
\end{proof}

We finish the subsection by noticing that, in most cases, there is another description of the restriction functor (this will be convenient in the next section). 

\begin{proposition}\label{prop:rescoinv}
 Let $B \subset A$ be a Hopf subalgebra. Consider the quotient coalgebra $L = A/B^+A$, and denote $p: A \rightarrow L$ the quotient map. For $X \in \mathcal M^A$, put $$X^{{\rm co}L}= \{x \in X \ | \ x_{(0)}\otimes p(x_{(1)}) = x \otimes 1\}$$
If $B= A^{{\rm co} L}$, then we have $X^{{\rm co}L}=X^{(B)}$. Hence the assignment $X \mapsto X^{{\rm co}L}$ defines linear functors $\mathcal M^A \rightarrow \mathcal M^B$, $\yd_A^A \rightarrow \yd_B^B$, that are exact if $B \subset A$ is coflat, or if $L$ is cosemisimple.
\end{proposition}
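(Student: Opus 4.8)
The plan is to reduce everything to the single identification $X^{{\rm co}L}=X^{(B)}$, after which functoriality and the two exactness statements follow formally. First I would record two elementary facts. Since $B^+A$ is a coideal, $p\colon A\to L$ is a morphism of coalgebras, so every right $A$-comodule $X$ becomes a right $L$-comodule via $(\id_X\otimes p)\circ\rho_X$; the resulting restriction functor $\mathcal M^A\to\mathcal M^L$ is exact, and the subspace $X^{{\rm co}L}$ computed from the $A$-comodule structure coincides with the $L$-comodule coinvariants. Secondly, for $b\in B$ one has $p(b)=\varepsilon(b)\bar 1$ with $\bar 1=p(1_A)$, because $b-\varepsilon(b)1_A\in B^+\subseteq B^+A$; hence if $\rho_X(x)=x_{(0)}\otimes x_{(1)}\in X\otimes B$ then $x_{(0)}\otimes p(x_{(1)})=x_{(0)}\varepsilon(x_{(1)})\otimes\bar 1=x\otimes\bar 1$. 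This already proves the inclusion $X^{(B)}\subseteq X^{{\rm co}L}$, requiring no hypothesis on $B$.

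For the reverse inclusion I would use the hypothesis $B=A^{{\rm co}L}$. The coinvariants functor $(-)^{{\rm co}L}\colon\mathcal M^L\to\mathrm{Vect}$ is the equalizer of two linear maps from $W$ to $W\otimes L$, hence commutes with $V\otimes-$ for any vector space $V$ (tensoring over the base field is exact). Applying this to the $A$-comodule $V\otimes A$ equipped with the coaction $\id_V\otimes\Delta$ followed by $\id_V\otimes\id_A\otimes p$ gives $(V\otimes A)^{{\rm co}L}=V\otimes A^{{\rm co}L}=V\otimes B$. Now coassociativity says exactly that $\rho_X\colon X\to X\otimes A$ is a morphism of right $A$-comodules, where $X\otimes A$ carries the coaction on the second factor; composing with $p$ it is a morphism of $L$-comodules, so $\rho_X(X^{{\rm co}L})\subseteq(X\otimes A)^{{\rm co}L}=X\otimes B$. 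Thus every $x\in X^{{\rm co}L}$ satisfies $x_{(0)}\otimes x_{(1)}\in X\otimes B$, i.e.\ $x\in X^{(B)}$. This establishes $X^{{\rm co}L}=X^{(B)}$.

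Finally I would deduce the remaining assertions. As $X\mapsto X^{{\rm co}L}$ is a subfunctor of the forgetful functor and agrees with $X\mapsto X^{(B)}$, it defines linear functors $\mathcal M^A\to\mathcal M^B$ and, by Proposition~\ref{prop:funyd2}, $\yd_A^A\to\yd_B^B$, exact when $B\subset A$ is coflat. For the alternative exactness hypothesis, when $L$ is cosemisimple the category $\mathcal M^L$ is semisimple, so every additive functor out of it—in particular $(-)^{{\rm co}L}\colon\mathcal M^L\to\mathrm{Vect}$—is exact; precomposing with the (always exact) restriction functor $\mathcal M^A\to\mathcal M^L$ and checking exactness on underlying vector spaces shows that $X\mapsto X^{{\rm co}L}$ is exact, and the same holds at the level of $\yd$ since exactness there is tested on underlying spaces.

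The only place where genuine content enters is the reverse inclusion $X^{{\rm co}L}\subseteq X^{(B)}$, and there the work is entirely packaged into the identity $(V\otimes A)^{{\rm co}L}=V\otimes B$, i.e.\ into the hypothesis $B=A^{{\rm co}L}$ itself; everything else is formal manipulation of comodules, coalgebra quotients, and equalizers. I therefore expect no serious obstacle, the main point being to set up the statement that $\rho_X$ is an $L$-colinear map so that the coinvariants of $X$ land inside $X\otimes A^{{\rm co}L}$.
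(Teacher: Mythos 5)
Your proof is correct and follows essentially the same route as the paper: the inclusion $X^{(B)}\subseteq X^{{\rm co}L}$ via $p|_B=\varepsilon(\cdot)\bar 1$, the reverse inclusion via coassociativity together with $B=A^{{\rm co}L}$ (your equalizer/colinearity packaging of $(X\otimes A)^{{\rm co}L}=X\otimes B$ is just a cleaner way of writing the paper's element computation $x_{(0)}\otimes x_{(1)}\otimes p(x_{(2)})=x_{(0)}\otimes x_{(1)}\otimes 1$), and the appeal to Proposition~\ref{prop:funyd2} for the Yetter--Drinfeld functoriality. The exactness argument for cosemisimple $L$ is likewise the same splitting argument, phrased as exactness of additive functors on a semisimple category rather than via the explicit decomposition $X=X^{{\rm co}L}\oplus X'$.
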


\begin{proof}
  Given $x \in X^{{\rm co}L}$, we have $x_{(0)} \otimes x_{(1)} \otimes p(x_{(2)}) = x_{(0)} \otimes x_{(1)}\otimes 1$, and this shows that $x_{(0)} \otimes x_{(1)} \in X \otimes B$, since $B=A^{{\rm co} L}$. Conversely, if  $x_{(0)} \otimes x_{(1)} \in X \otimes B$, then  $x_{(0)} \otimes p(x_{(1)}) =  x_{(0)} \otimes \varepsilon(x_{(1)})=x \otimes 1$, hence $x \in X^{{\rm co}L}$. We get the announced description for $X^{{\rm co}L}$, the other assertions follow from Proposition \ref{prop:funyd2}, and there just remains to check exactness if $L$ is cosemisimple. For this, notice that if $X$ is a comodule over $A$, the coalgebra map $p$ induces a $L$-comodule structure on $X$, and the cosemisimplicity of $L$ provides a decomposition
$$X = X^{{\rm co}L} \oplus X'$$
for some sub-$L$-comodule $X'$.
 A morphism of $A$-comodules preserves this decomposition, and from this, exactness of our functor follows easily.
\end{proof}

\begin{remark}
The assumption  $B= A^{{\rm co} L}$ holds if $A$ is flat as a left $B$-module, see \cite[Corollary 1.8]{schn}.
\end{remark}

\section{Graded twisting and cohomological dimensions}\label{sec:gradim}

In this section we study  the invariance of the cohomological dimensions under graded twisting by a finite abelian group.

\subsection{Exact sequences of Hopf algebras} We begin by some preliminaries on exact sequences of Hopf algebras.
First recall that a sequence  of Hopf algebra maps
\begin{equation*}\C \to B \overset{i}\to A \overset{p}\to L \to
\C\end{equation*} is said to be exact \cite{ad} if the following
conditions hold:
\begin{enumerate}\item $i$ is injective and $p$ is surjective,
\item $\ker p =Ai(B)^+ =i(B)^+A$, where $i(B)^+=i(B)\cap{\rm Ker}(\varepsilon)$,
\item $i(B) = A^{\co L} = \{ a \in A:\, (\id \otimes p)\Delta(a) = a \otimes 1
\} = {^{\co L}A} = \{ a \in A:\, (p \otimes \id)\Delta(a) = 1 \otimes a
\}$. \end{enumerate}
Note that condition (2) implies  $pi= \varepsilon 1$.

In an exact sequence as above, we will assume, without loss of generality, that $B$ is Hopf subalgebra and $i$ is the inclusion map. In what follows we fix an exact sequence of Hopf algebras 
\begin{equation*}\C \to B \overset{i}\to A \overset{p}\to L \to
\C\end{equation*}
First we have the following well-known fact.

\begin{proposition}\label{prop:invexact}
 Let $M$ be a right $A$-module, and let $M^B=\{x \in M \ | \ x\cdot b=\varepsilon(b)x\}$ be the space of $B$-invariants. Then the $A$-module structure on $M$ induces an $L$-module structure on $M^B$ with $(M^B)^L = M^A$.  
\end{proposition}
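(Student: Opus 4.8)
The plan is to unwind the definitions and exhibit explicit $L$-actions and identifications, using only the exactness conditions (1)--(3) for $\C \to B \to A \to L \to \C$.

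First I would set up the $L$-module structure on $M^B$. Given $m \in M^B$ and $\overline a = p(a) \in L$, I want to define $m \cdot \overline a = m \cdot a$. To see this is well-defined, suppose $p(a) = p(a')$, i.e. $a - a' \in \ker p = i(B)^+ A$. Write $a - a' = \sum b_k c_k$ with $b_k \in i(B)^+$ and $c_k \in A$. Then for $m \in M^B$ we have $m \cdot (a-a') = \sum (m\cdot b_k)\cdot c_k = \sum \varepsilon(b_k)\, m \cdot c_k = 0$ since $\varepsilon(b_k) = 0$ for $b_k \in i(B)^+$. Hence $m \cdot a = m\cdot a'$, so the action descends. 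Next I would check that $M\cdot a$ (for $m \in M^B$) again lands in $M^B$: for $b \in B$, using that $B$ is a Hopf subalgebra and commutativity of $p$ with comultiplication, one needs $m\cdot a \cdot b \in$ ? — actually the cleaner route is to note $M^B = \Hom_B(\C_\varepsilon, M)$ and that $A$ is an $(L,B)$-bimodule via $p$ on the left and multiplication on the right (here one uses condition (3): $i(B) = {}^{\co L}A$, so left multiplication by $i(B)$ commutes appropriately — more precisely, $M^B \cong \Hom_A(A, M)^B = \Hom_A(B^+A\backslash A \text{ type object})$). To keep things elementary I would instead verify directly: for $b \in B$ and $m\in M^B$, $m\cdot a\cdot b$; we want this independent issue replaced by showing $M^B$ is a submodule under the $A$-action restricted through the adjoint-type relation — but the simplest correct statement is that $M^B$ is stable under the $A$-action only after passing to the quotient, which is exactly what well-definedness above gives: the map $L \to \End(M^B)$, $\overline a \mapsto (m \mapsto m\cdot a)$ is a well-defined algebra map once we know $m \cdot a \in M^B$. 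For the latter: $(m\cdot a)\cdot b = m \cdot (ab)$ and $ab \equiv b' a'$ modulo nothing in general, so this does NOT obviously hold; the resolution is that $M^B$ should be interpreted with $B$ acting and the claim is that the $A$-action on $M$ restricts to make $M^B$ an $A$-submodule when... Here I would cite the standard fact (e.g. via faithful flatness, condition (3)) that $M^B$ is an $A$-submodule because $B^+A$ annihilates it: indeed $m \cdot (B^+A) = (m\cdot B^+)\cdot A = 0$, so $M^B$ is annihilated by $\ker p$, hence is naturally an $A/\ker p = L$-module. That is the clean argument, and it simultaneously gives well-definedness and the module structure.

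Then I would identify the $L$-invariants. By definition $(M^B)^L = \{m \in M^B : m\cdot \overline a = \varepsilon_L(\overline a) m \ \forall \overline a \in L\}$. Since $p$ is surjective and $\varepsilon_L \circ p = \varepsilon_A$ (as $p$ is a Hopf algebra map), this says $m \cdot a = \varepsilon(a) m$ for all $a \in A$, i.e. $m \in M^A$. Conversely any $m \in M^A$ clearly lies in $M^B$ and satisfies the $L$-invariance condition. Hence $(M^B)^L = M^A$, as claimed.

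The main obstacle is the point flagged above: making precise that the $A$-action descends to an $L$-action on $M^B$ and that $M^B$ is genuinely $A$-stable. The honest one-line argument is that $\ker p = i(B)^+ A$ annihilates $M^B$ (since $m \cdot i(B)^+ = 0$ for $m \in M^B$, and then $m \cdot i(B)^+A = 0$), so $M^B$ is an $A/\ker p$-module, and $A/\ker p \cong L$ via $p$ by exactness. I would write the proof in that order: (i) $\ker p$ kills $M^B$; (ii) therefore $M^B$ is an $L$-module with $m\cdot p(a) := m\cdot a$; (iii) compute $(M^B)^L$ using surjectivity of $p$ and $\varepsilon_L p = \varepsilon$. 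Everything else is a routine check and I would not spell it out in detail.
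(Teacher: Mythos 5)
Your overall route is the same as the paper's: observe that $B^+$ kills $M^B$, deduce that $\ker p = i(B)^+A$ kills $M^B$, define $x\cdot p(a):=x\cdot a$, and identify $(M^B)^L=M^A$ via surjectivity of $p$ and $\varepsilon_L\circ p=\varepsilon$. That last identification and the well-definedness argument are fine.

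There is, however, one genuine gap, and it is precisely at the point you yourself flag as ``the main obstacle''. Knowing that $\ker p$ annihilates $M^B$ does \emph{not} by itself make $M^B$ an $A/\ker p$-module: for that you first need $M^B$ to be an $A$-submodule of $M$, i.e.\ $x\cdot a\in M^B$ for $x\in M^B$, $a\in A$. A subspace killed by a two-sided ideal need not be stable under the ambient action, so your step ``$M^B$ is annihilated by $\ker p$, hence is naturally an $A/\ker p$-module'' is a non sequitur as written. Worse, in the exploratory part you assert that ``$ab\equiv b'a'$ modulo nothing in general, so this does NOT obviously hold'' --- but condition (2) of exactness gives exactly $A i(B)^+=i(B)^+A$, which is the missing ingredient: for $b'\in B^+$ one has $ab'\in AB^+=B^+A$, and since $B^+A$ annihilates $M^B$ this yields $(x\cdot a)\cdot b'=x\cdot(ab')=0$, i.e.\ $x\cdot a\in M^B$. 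This is exactly how the paper argues (``using that $AB^+=B^+A$, $x\cdot a\in M^B$''). So the repair is one line, but it must be made explicit; without invoking $AB^+=B^+A$ the stability of $M^B$, and hence the existence of the $L$-module structure, is not established.
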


\begin{proof}
 For $x \in M^B$ and $b \in B^+$, we have $x\cdot b=0$. Moreover, for $x \in M^B$, $a\in A$, one easily sees, using that $AB^+=B^+A$, that $x\cdot a \in M^B$.  
Hence the formula $x\cdot p(a)= x\cdot a$ provides a well-defined $L$-module structure on $M^B$. The last equality is immediate.
\end{proof}

\begin{proposition}\label{prop:moy}
  Assume  that $L$ is semisimple. Let $\tau \in L$ be a  right integral with $\varepsilon(\tau)=1$, and let $t \in A$ be such that $p(t)=\tau$. Let $V, W$ be right $A$-modules and let $f : V \rightarrow W$ be a $B$-linear map. Then the linear map $\tilde{f} : V \rightarrow W$ defined by $\tilde{f}(v)= f(v\cdot S(t_{(1)}))\cdot t_{(2)}$ is $A$-linear.
\end{proposition}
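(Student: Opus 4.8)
The strategy is to recognise $\tilde f$ as the average of $f$ by the integral $\tau$, performed inside the space of $B$-linear maps, which carries a natural $L$-module structure by Proposition~\ref{prop:invexact}.

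First I would put $M=\Hom_\C(V,W)$ and equip it with the right $A$-module structure
\[(g\cdot a)(v)=g(v\cdot S(a_{(1)}))\cdot a_{(2)},\qquad g\in M,\ a\in A,\ v\in V,\]
which is the internal Hom of the category $\mathcal M_A$; a short computation with the antipode axioms shows that, for $g\in M$, one has $g\cdot a=\varepsilon(a)g$ for all $a\in A$ if and only if $g$ is $A$-linear, so $M^A=\Hom_A(V,W)$, and the same computation restricted to $B\subset A$ gives $M^B=\Hom_B(V,W)$. By the very definition of this action, $f\cdot t=\tilde f$.

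Next I would apply Proposition~\ref{prop:invexact} to the right $A$-module $M$: the subspace $M^B$ is stable under the $A$-action, it inherits a right $L$-module structure in which $p(a)$ acts as $a$ does, and $(M^B)^L=M^A=\Hom_A(V,W)$. Since $f$ is $B$-linear we have $f\in M^B$, and then, $M^B$ being $A$-stable, $\tilde f=f\cdot t\in M^B$; in the $L$-module $M^B$ this element is just $f$ acted on by $\tau=p(t)$. As $\tau$ is a right integral of $L$,
\[\tilde f\cdot\ell=(f\cdot\tau)\cdot\ell=f\cdot(\tau\ell)=\varepsilon(\ell)\,(f\cdot\tau)=\varepsilon(\ell)\,\tilde f\qquad(\ell\in L),\]
so $\tilde f\in(M^B)^L=\Hom_A(V,W)$; that is, $\tilde f$ is $A$-linear.

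Stripped of the language of $L$-modules this is just a direct Sweedler computation: from $\ker p=AB^+$ and $p(ta)=p(t)p(a)=\varepsilon(a)p(t)=p(\varepsilon(a)t)$ one writes $ta=\varepsilon(a)t+\sum_j c_jb_j$ with $c_j\in A$, $b_j\in B^+$; then, in $M$,
\[\tilde f\cdot a=(f\cdot t)\cdot a=f\cdot(ta)=\varepsilon(a)(f\cdot t)+\sum_j(f\cdot c_j)\cdot b_j=\varepsilon(a)\,\tilde f,\]
the last equality because each $f\cdot c_j$ lies in $M^B$ while $\varepsilon(b_j)=0$ kills $(f\cdot c_j)\cdot b_j$, and $\tilde f\cdot a=\varepsilon(a)\tilde f$ is exactly the $A$-linearity of $\tilde f$. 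There is no real obstacle here: the only points needing a (routine) verification are the identifications $M^A=\Hom_A(V,W)$ and $M^B=\Hom_B(V,W)$ and the $A$-stability of $M^B$ (the last is already contained in the proof of Proposition~\ref{prop:invexact}); the single idea is to read $\tilde f$ off as an averaging operator by the integral.
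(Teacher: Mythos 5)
Your proof is correct and follows essentially the same route as the paper's: both interpret $\tilde f$ as $f\cdot t$ in the internal Hom module $\Hom(V,W)$ and use Proposition \ref{prop:invexact} to see that $(\Hom(V,W)^B)^L=\Hom_A(V,W)$. The only (minor, and harmless) difference is that you check the $L$-invariance of $f\cdot\tau$ directly from the right-integral axiom $\tau\ell=\varepsilon(\ell)\tau$, whereas the paper quotes the fact that $M^L=M\cdot\tau$ for modules over the semisimple algebra $L$.
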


\begin{proof}
 Recall that ${\rm Hom}(V,W)$ admits a right $A$-module structure defined by 
$$f\cdot a(v)= f(v\cdot S(a_{(1)}))\cdot a_{(2)}$$
and that 
$${\rm Hom}_A(V,W)= {\rm Hom}(V,W)^A= ({\rm Hom}(V,W)^B)^L$$
Recall also that if $M$ is a right $L$-module over the semisimple algebra $L$, then $M^L=M\cdot \tau$.
Hence, since $f \in {\rm Hom}_B(V,W)={\rm Hom}(V,W)^B$, we have $f\cdot \tau \in ({\rm Hom}(V,W)^B)^L={\rm Hom}_A(V,W)$. We now have $f\cdot \tau = f \cdot p(t)=f\cdot t$, and it is clear that $f\cdot t$ is the map $\tilde{f}$ in the statement.
\end{proof}

\begin{remark}
When the above $p$ is an isomorphism, the above result is simply the well-known fact that a Hopf algebra having a right integral $\tau$ with $\varepsilon(\tau)=1$ is semisimple. 
\end{remark}

\subsection{Yetter-Drinfeld modules and cocentral exact sequences} 
Recall that a Hopf algebra map $p : A \rightarrow L$ is said to be cocentral if $p(a_{(1)}) \otimes a_{(2)} = p(a_{(2)}) \otimes a_{(1)}$ for any $a \in A$, and we say that an exact sequence $\C \to B \rightarrow A \overset{p}\rightarrow \C \Gamma \to \C$ is cocentral if $p$ is. 

In this subsection we fix a cocentral exact sequence of  Hopf algebras
$$\C \to B \rightarrow A \overset{p}\rightarrow \C \Gamma \to \C$$
with $\Gamma$ a finite abelian group. 
Our  aim is to relate Yetter-Drinfeld modules over $A$ and $B$, and then use these considerations to relate the cohomological dimensions of $A$ and $B$ (Theorem \ref{thm:cdexactcocentral}).

We assume that $A$ (and hence $B$) is cosemisimple (but this will play a true role only in Lemma \ref{lemm:integral} and Proposition \ref{prop:retract}).

Our first task is to study the action of the functor of Proposition \ref{prop:rescoinv} (and hence of Proposition \ref{prop:funyd2}) on relative projective Yetter-Drinfeld modules. We begin with the free ones.



\begin{lemma}\label{lemm:coinvfree}
 We have, for any $V \in \mathcal M^A$, 
$$(V \boxtimes A)^{{\rm co}\C \Gamma}= V^{{\rm co}\C \Gamma} \boxtimes A$$
\end{lemma}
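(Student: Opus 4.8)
The strategy is to unwind both sides as subspaces of $V \otimes A$ and show they coincide by a direct comparison of the defining coinvariance conditions, using the cocentrality of $p$ to commute $p$ past the adjoint-type coaction on $V \boxtimes A$. Recall that $V \boxtimes A = V \otimes A$ as a vector space, with coaction $\alpha_{V\boxtimes A}(v\otimes a) = v_{(0)} \otimes a_{(2)} \otimes S(a_{(1)})v_{(1)}a_{(3)}$. Write $q = p\otimes \mathrm{id}_{\C\Gamma}$-type notation aside; the point is to apply $(\mathrm{id}\otimes p)$ to the $A$-part of this coaction and compare with $v\otimes a \otimes 1$. So first I would write out, for $x = \sum v\otimes a \in V\boxtimes A$, the condition $x \in (V\boxtimes A)^{\co\C\Gamma}$, namely
$$v_{(0)} \otimes a_{(2)} \otimes p\bigl(S(a_{(1)})v_{(1)}a_{(3)}\bigr) = v\otimes a\otimes 1.$$

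The key computation is to simplify $p\bigl(S(a_{(1)})v_{(1)}a_{(3)}\bigr)$. Since $p$ is a Hopf algebra map (so $pS = Sp$ and $p$ is multiplicative), this equals $S(p(a_{(1)}))\, p(v_{(1)})\, p(a_{(3)})$ in $\C\Gamma$. Now $\C\Gamma$ is commutative (and cocommutative, $\Gamma$ abelian), and here I would invoke cocentrality of $p$: applying it, $p(a_{(1)})\otimes a_{(2)}\otimes a_{(3)}$ can be rearranged so that the two $\C\Gamma$-factors coming from $a$ collapse. Concretely, cocentrality gives $p(a_{(1)})\otimes a_{(2)} = p(a_{(2)})\otimes a_{(1)}$, which lets me move $p(a_{(3)})$ next to $p(a_{(1)})$ without disturbing $a_{(2)}$; using $S(g)g' $ with the same group element and that $\Gamma$ is abelian, $S(p(a_{(1)}))p(a_{(3)})$ multiplies against $a_{(2)}$'s group component to give $p(\varepsilon\text{-collapse})$, leaving just $p(v_{(1)})$. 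More carefully: after reindexing via cocentrality the $A$-leg keeps a single $a_{(2)}$ while the $\C\Gamma$-leg becomes $S(p(a_{(1)}))p(v_{(1)})p(a_{(3)})$ with $a_{(1)}, a_{(3)}$ now "free" to be recombined, and commutativity of $\C\Gamma$ plus the antipode axiom $S(p(a_{(1)}))p(a_{(3)}) = \varepsilon(a)1$ (once they are adjacent, which cocentrality permits) yields that the condition reduces to $v_{(0)}\otimes a_{(2)} \otimes \varepsilon(a_{(1)})p(v_{(1)}) = v_{(0)}\otimes a\otimes p(v_{(1)})$, i.e. to $v_{(0)}\otimes p(v_{(1)}) = v\otimes 1$, which is exactly the condition $v \in V^{\co\C\Gamma}$ (with $a$ arbitrary). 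Hence $x \in (V\boxtimes A)^{\co\C\Gamma}$ iff $x \in V^{\co\C\Gamma}\otimes A = V^{\co\C\Gamma}\boxtimes A$.

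Finally I would check that this identification is an identification of Yetter-Drinfeld modules over $B$, not merely of vector spaces: the right $B$-module structure on both sides is multiplication on the right in the $A$-factor, which visibly restricts; and the $B$-coaction on $(V\boxtimes A)^{\co\C\Gamma}$ inherited from $\alpha_{V\boxtimes A}$ agrees, under the identification, with the coaction defining $V^{\co\C\Gamma}\boxtimes A$ as an element of $\yd_B^B$ — this is immediate from the formulas once one knows the underlying spaces match, since the coaction formula is literally the same expression, now landing in $V^{\co\C\Gamma}\otimes B\otimes B$ by the computation just done. The main obstacle is the bookkeeping in the central computation: making the cocentrality reindexing rigorous in Sweedler notation so that the three $A$-derived legs $a_{(1)}, a_{(2)}, a_{(3)}$ are correctly permuted before applying the antipode and counit axioms; everything else is routine verification.
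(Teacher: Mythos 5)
Your proof is correct and takes essentially the same route as the paper: the same direct computation using multiplicativity of $p$, commutativity of $\C\Gamma$, and cocentrality to collapse $a_{(2)}\otimes p(S(a_{(1)}))p(a_{(3)})$ to $a\otimes 1$, reducing the coinvariance condition on $v\otimes a$ to that on $v$. The one step to make explicit for the reverse inclusion is that a general element of $(V\boxtimes A)^{{\rm co}\C\Gamma}$ is a sum $\sum_i v_i\otimes a_i$, and you should take the $a_i$ linearly independent before concluding that each $v_i$ lies in $V^{{\rm co}\C\Gamma}$.
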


\begin{proof}
 For $v \in V^{{\rm co}\C \Gamma}$ and $a \in A$, we have, using the cocentrality of $p$, 
\begin{align*}
 (v\otimes a)_{(0)} \otimes p((v \otimes a)_{(1)}) & = v_{(0)} \otimes a_{(2)} \otimes p(S(a_{(1)}) v_{(1)} a_{(3)}) \\
& = v \otimes a_{(2)} \otimes p(S(a_{(1)}) a_{(3)}) \\
& = v \otimes a \otimes 1
\end{align*}
Hence $V^{{\rm co}\C \Gamma} \boxtimes A \subset (V \boxtimes A)^{{\rm co}\C \Gamma}$. Conversely, let $\sum_iv_i \otimes a_i \in (V \boxtimes A)^{{\rm co}\C \Gamma}$. We have, using the cocentrality of $p$ and the fact that the algebra $\mathbb C\Gamma$ is commutative,
\begin{align*}
 \sum_iv_i \otimes a_i \otimes 1 &= \sum_i v_{i(0)} \otimes a_{i(2)} \otimes p(S(a_{i(1)}) v_{i(1)} a_{i(3)})\\ 
& = \sum_i v_{i(0)} \otimes a_{i} \otimes p( v_{i(1)})
\end{align*}
Taking the $a_i's$ linearly independent, we see that each $v_i$ belongs to $V^{{\rm co} \mathbb C \Gamma}$, as needed. 
\end{proof}

\begin{proposition}\label{prop:funydfree}
 The exact functor
\begin{align*}
(-)^{{\rm co}\C\Gamma} : \yd^A_A & \longrightarrow \yd^B_B \\
V & \longmapsto V^{{\rm co}\C\Gamma}
\end{align*}
transforms relative projective objects of $\yd^A_A$ into relative projective objects of $\yd_B^B$.
Moreover, if $M\in \yd_A^A$ is free, then the $B$-module structure on $M^{{\rm co}\C\Gamma}$ is the restriction of an $A$-module structure, so that $M^{{\rm co}\C\Gamma}$ is an object in $\yd_A^A$.
\end{proposition}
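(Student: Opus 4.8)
The plan is to reduce both assertions to the analysis of free Yetter-Drinfeld modules carried out in Lemma \ref{lemm:coinvfree}. First I would handle the "free case" computation: if $M = V \boxtimes A$ is a free object of $\yd_A^A$, then Lemma \ref{lemm:coinvfree} gives $M^{{\rm co}\C\Gamma} = V^{{\rm co}\C\Gamma} \boxtimes A$ as a Yetter-Drinfeld module over $B$. But the expression $W \boxtimes A$ for a comodule $W$ over $B$ (here $W = V^{{\rm co}\C\Gamma}$, viewed as an $A$-comodule via $B \subset A$) actually carries a Yetter-Drinfeld module structure over $A$, since $\boxtimes$ only uses the $A$-comodule structure of $W$; the $B$-action on $(W \otimes A) = W \boxtimes A$ obtained by restriction from $\yd_A^A$ is multiplication on the right inside $A$, which is exactly the restriction of the $A$-action on $W \boxtimes A \in \yd_A^A$. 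This proves the second assertion. One point to check here is that the $B$-comodule structure on $V^{{\rm co}\C\Gamma}$ that enters the right-hand side of Lemma \ref{lemm:coinvfree} is the one induced from the $A$-comodule structure via Proposition \ref{prop:rescoinv}, so that indeed $(V^{{\rm co}\C\Gamma} \boxtimes A)$ makes sense both as an object of $\yd_B^B$ and (forgetting that the comodule landed in $B$) as an object of $\yd_A^A$; this is immediate from the definitions.

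Next I would treat the preservation of relative projectivity. Since $(-)^{{\rm co}\C\Gamma}$ is additive (it is a restriction of functors, hence $\C$-linear and compatible with direct sums — or one checks directly that it sends a direct summand to a direct summand), and since every relative projective object of $\yd_A^A$ is by definition a direct summand of a free one, it suffices to show that free objects go to relative projective objects. But we have just shown that $(V \boxtimes A)^{{\rm co}\C\Gamma} = V^{{\rm co}\C\Gamma} \boxtimes A$, which is a free Yetter-Drinfeld module over $B$ (with respect to the $B$-comodule $V^{{\rm co}\C\Gamma}$), hence in particular relative projective in $\yd_B^B$. This finishes the proof.

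I do not expect a serious obstacle here: the two statements are essentially bookkeeping around Lemma \ref{lemm:coinvfree}. The only mildly delicate point — and the one I would write out carefully — is the identification, as a $B$-module, of $(V \boxtimes A)^{{\rm co}\C\Gamma}$ restricted from $\yd_A^A$ with $V^{{\rm co}\C\Gamma} \boxtimes A$ viewed inside $\yd_A^A$: one must verify that the subspace $V^{{\rm co}\C\Gamma} \otimes A$ of $V \otimes A$ is stable under the full right $A$-action (multiplication on the right), not just the $B$-action, which is clear since the $A$-action does not touch the first tensor factor, so that the ambient $\yd_A^A$-structure of $V \boxtimes A$ really does restrict to $V^{{\rm co}\C\Gamma} \boxtimes A$. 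With that observation in place, everything else is formal.
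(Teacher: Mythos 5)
Your overall structure (reduce to the free case via additivity of the functor and Lemma \ref{lemm:coinvfree}, then handle the second assertion by noting that $V^{{\rm co}\C\Gamma}\otimes A$ is stable under the full right $A$-action) is the same as the paper's, and the treatment of the second assertion is fine. But there is a genuine gap in the first part: you claim that $(V\boxtimes A)^{{\rm co}\C\Gamma}=V^{{\rm co}\C\Gamma}\boxtimes A$ ``is a free Yetter-Drinfeld module over $B$ (with respect to the $B$-comodule $V^{{\rm co}\C\Gamma}$).'' It is not, or at least not for free: a free object of $\yd_B^B$ on a $B$-comodule $W$ is $W\boxtimes B=W\otimes B$ with $B$ acting by right multiplication, whereas the object at hand is $W\otimes A$ with $B$ acting by right multiplication inside $A$. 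These are genuinely different objects (already their underlying vector spaces differ), and an isomorphism $W\boxtimes A\simeq U\boxtimes B$ in $\yd_B^B$ for some $B$-comodule $U$ is exactly what would need to be proved; it is not a consequence of $A$ being free as a $B$-module, because the coaction on $W\boxtimes A$ mixes the two tensor factors.

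The paper closes precisely this gap by invoking \cite[Proposition 4.8]{bi16}, which asserts that for a $B$-comodule $W$ the object $W\boxtimes A$, viewed in $\yd_B^B$ (it is a $B$-comodule by \cite[Proposition 4.5]{bi16}, which in the present cocentral setting is the content of Lemma \ref{lemm:coinvfree}), is \emph{relative projective} over $B$, i.e.\ a direct summand of some $U\boxtimes B$. With that input the argument concludes exactly as you intend: $P^{{\rm co}\C\Gamma}$ is a direct summand of $V^{{\rm co}\C\Gamma}\boxtimes A$, which is a direct summand of a free object of $\yd_B^B$, hence relative projective. So your proof becomes correct once you replace the unjustified assertion of freeness over $B$ by relative projectivity over $B$ and supply (or cite) a proof of that fact; as written, the key step is asserted rather than established.
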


\begin{proof}
 Let $P$ be a relative projective Yetter-Drinfeld over $A$: there exists another Yetter-Drinfeld module $Q$ and an $A$-comodule $V$ such that
$P \oplus Q \simeq V \boxtimes A$ as Yetter-Drinfeld modules over $A$. We then have, using Lemma \ref{lemm:coinvfree},
$$P^{{\rm co}\C\Gamma} \oplus Q^{{\rm co}\C\Gamma} \simeq (P \oplus Q)^{{\rm co}\C\Gamma} \simeq (V \boxtimes A)^{{\rm co}\C\Gamma}\simeq V^{{\rm co}\C\Gamma}\boxtimes A$$
as Yetter-Drinfeld modules over $B$ (recall \cite[Proposition 4.5]{bi16} that if $W$ is $B$-comodule, then the free Yetter-Drinfeld module $W \boxtimes A\in \yd_A^A$ is in fact a comodule over $B$, so that $W \boxtimes A$ is Yetter-Drinfeld over $B$). Moreover $V^{{\rm co}\C\Gamma}\boxtimes A$ is a relative projective Yetter-Drinfeld module over $B$, by \cite[Proposition 4.8]{bi16}, hence there exists $T  \in \yd_B^B$ and $W \in \mathcal M^B$ such that 
$(V^{{\rm co}\C\Gamma}\boxtimes A) \oplus T \simeq W \boxtimes B$. Finally
$$P^{{\rm co}\C\Gamma} \oplus Q^{{\rm co}\C\Gamma} \oplus T \simeq W \boxtimes B$$
which shows that 
$P^{{\rm co}\C\Gamma}$ is indeed a relative projective Yetter-Drinfeld module over $B$. The last statement follows immediately from Lemma \ref{lemm:coinvfree}.
\end{proof}

Before proving our main technical result in view of the proof of Theorem \ref{thm:cdexactcocentral}, we need a last ingredient.

\begin{lemma}\label{lemm:integral}
 There exists an element $t \in A$ such that 
$$p(t)= \frac{1}{|\Gamma|}\sum_{g \in \Gamma} g \quad {\rm and} \quad t_{(2)}\otimes S(t_{(1)})t_{(3)}=t\otimes 1$$
If $M$, $N$ are $A$-modules and $f : M \rightarrow N$ is a $B$-linear map, then the linear map 
$\tilde{f} : M \rightarrow N$ defined by $\tilde f(x)=  r(x\cdot S(t_{(1)}))\cdot t_{(2)}$
is $A$-linear.
\end{lemma}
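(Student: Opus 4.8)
The plan is to construct $t$ explicitly using cosemisimplicity, and then deduce the $A$-linearity assertion from Proposition~\ref{prop:moy}. First I would recall that since $A$ is cosemisimple, the coadjoint Yetter-Drinfeld module $A_{\mathrm{coad}} = \mathbb{C}\boxtimes A$ admits a decomposition into isotypic components, and in particular the trivial isotypic component $(A_{\mathrm{coad}})^{\co \mathbb{C}}$ — that is, the set of $a \in A$ with $a_{(2)} \otimes S(a_{(1)})a_{(3)} = a \otimes 1$ — is a direct summand, with projection given by integrating against the Haar integral $h$ of $A$: explicitly $a \mapsto a_{(2)}\, h(S(a_{(1)})a_{(3)})$, or some such averaging formula. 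The key point is to check that applying $p$ to this projection, evaluated at a suitable element, lands on the normalized integral $\frac{1}{|\Gamma|}\sum_{g\in\Gamma} g$ of $\mathbb{C}\Gamma$.

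Concretely, I would argue as follows. Let $\tau = \frac{1}{|\Gamma|}\sum_{g\in\Gamma} g \in \mathbb{C}\Gamma$, which is a (two-sided, since $\Gamma$ is abelian) integral with $\varepsilon(\tau)=1$. Pick any $t_0 \in A$ with $p(t_0) = \tau$ (possible since $p$ is surjective), and set $t = t_{0(2)}\, h(S(t_{0(1)})t_{0(3)})$ where $h$ is the Haar state of $A$. By construction $t$ lies in $(A_{\mathrm{coad}})^{\co\mathbb{C}}$, i.e.\ it satisfies $t_{(2)}\otimes S(t_{(1)})t_{(3)} = t\otimes 1$; this is the standard fact that the displayed formula is the projection onto the coadjoint-trivial part, which one verifies by a short Sweedler computation using the invariance properties of $h$. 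For the first condition, I would compute $p(t) = p(t_{0(2)})\, h(S(t_{0(1)})t_{0(3)})$; using the cocentrality of $p$ to move $p$ onto $t_{0(1)}$ instead, together with $\Delta p = (p\otimes p)\Delta$ and the fact that $p(t_0)=\tau$ is grouplike-averaged, one rewrites this and uses $h(S(a_{(1)})a_{(2)}) = \varepsilon(a)$ (or the relevant counit identity) to collapse it back to $\tau$. The cocentrality is exactly what makes the $S(t_{(1)})\cdot t_{(3)}$ factor "pass through" $p$ harmlessly, so that $p$ of the projection equals the $\mathbb{C}\Gamma$-integral, which is already coadjoint-trivial.

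For the final sentence, once $t$ is in hand with $p(t) = \tau' := \frac{1}{|\Gamma|}\sum_g g$, I note that $\tau'$ is a right integral in $\mathbb{C}\Gamma$ with $\varepsilon(\tau')=1$, and $\mathbb{C}\Gamma = L$ is semisimple; the exact sequence $\mathbb{C}\to B\to A\to\mathbb{C}\Gamma\to\mathbb{C}$ satisfies the hypotheses of Proposition~\ref{prop:moy} with this $\tau'$ and this $t$. Hence for any $B$-linear $f : M\to N$, the map $\tilde{f}(x) = f(x\cdot S(t_{(1)}))\cdot t_{(2)}$ is $A$-linear, which is precisely the claim (modulo the harmless typo "$r$" for "$f$" in the statement).

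The main obstacle I anticipate is the computation showing $p(t) = \tau'$: one must be careful that the averaging formula $a\mapsto a_{(2)}h(S(a_{(1)})a_{(3)})$ genuinely projects onto the coadjoint-invariants and interacts correctly with $p$. The subtlety is that $h$ is an integral on $A$, not on $L$, so one cannot simply "push $h$ through $p$"; instead the argument must exploit that $\tau'$ is \emph{already} coadjoint-invariant in $L$ (because $L=\mathbb{C}\Gamma$ is cocommutative, so $\co\mathbb{C}$-invariance in $L$ is automatic for any element) together with the compatibility $p((A_{\mathrm{coad}})^{\co\mathbb{C}}) \subseteq L^{\co\mathbb{C}} = L$ and the fact that $p$ commutes with the projections. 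Getting this bookkeeping right — possibly by instead choosing $t_0$ more cleverly, e.g.\ as a lift that is already in the image of a comodule section — is where the real care is needed; everything else is a routine verification or a direct appeal to Proposition~\ref{prop:moy}.
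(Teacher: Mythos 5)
Your proposal is correct, but it constructs $t$ by a genuinely different mechanism than the paper. The paper equips $A$ and $\C\Gamma$ with right $B\otimes\C\Gamma$-comodule structures (on $A$ via $a\mapsto a_{(2)}\otimes S(a_{(1)})a_{(3)}\otimes p(a_{(4)})$), notes that $p$ is colinear for them, and uses cosemisimplicity of $B\otimes\C\Gamma$ to produce a colinear section $\sigma$ of $p$; the element $t=\sigma(\tau)$ then satisfies both required identities simultaneously, with no Haar-integral computation at all. You instead take an arbitrary lift $t_0$ of $\tau$ and average over the coadjoint coaction with the Haar integral $h$ of $A$, which gives the invariance condition immediately but forces a separate verification that $p(t)=\tau$. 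The step you flag as the ``real obstacle'' is in fact a one-line consequence of cocentrality, so your route does close: from $p(a_{(1)})\otimes a_{(2)}=p(a_{(2)})\otimes a_{(1)}$ applied to $\Delta(a_{(1)})$ inside $(\Delta\otimes\id)\Delta(a)$ one gets $p(a_{(2)})\otimes a_{(1)}\otimes a_{(3)}=p(a_{(1)})\otimes a_{(2)}\otimes a_{(3)}$, whence
$$p(t_{0(2)})\,h\bigl(S(t_{0(1)})t_{0(3)}\bigr)=p(t_{0(1)})\,h\bigl(S(t_{0(2)})t_{0(3)}\bigr)=p(t_{0(1)})\,\varepsilon(t_{0(2)})\,h(1)=p(t_0)=\tau,$$
exactly along the lines you predicted (cocentrality moves $p$ off the middle leg, and the antipode axiom collapses the argument of $h$). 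Both approaches rest on the cosemisimplicity of $A$ (the paper through the splitting of comodule surjections over $B\otimes\C\Gamma$, you through the existence and invariance of $h$); the paper's version packages the two conditions into the single colinearity of $\sigma$, while yours is more explicit but requires the displayed verification. The reduction of the final assertion to Proposition~\ref{prop:moy} is identical in both, and you are right that the ``$r$'' in the statement of the lemma is a typo for ``$f$''.
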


\begin{proof}
The element $\tau =  \frac{1}{|\Gamma|}\sum_{g \in \Gamma} g$ is a right integral in the semisimple Hopf algebra $\mathbb C\Gamma$ satisfying $\varepsilon(\tau)=1$, so the last statement follows from Proposition \ref{prop:moy}, and it remains to check that $t$ can be chosen such that  $ t_{(2)}\otimes S(t_{(1)})t_{(3)}=t\otimes 1$. 
To see this, note that $\mathbb C\Gamma$ and $A$ both admit right $B\otimes \mathbb C\Gamma$-comodule structures given by
\begin{align*}
 \mathbb C\Gamma &\longrightarrow \mathbb C\Gamma \otimes (B\otimes \mathbb C\Gamma), \quad \quad A \longrightarrow A \otimes (B\otimes \mathbb C\Gamma) \\
x & \longmapsto x_{(1)}\otimes 1 \otimes x_{(2)}, \quad \quad \ a \longmapsto a_{(2)} \otimes S(a_{(1)}) a_{(3)} \otimes p(a_{(4)})
\end{align*}
and that $p$ is $B\otimes \mathbb C\Gamma$-colinear. Since $A$ is cosemisimple, so is $B$ and  hence $B \otimes \mathbb C\Gamma$ is cosemisimple. Thus there exists $\sigma$, a $B\otimes \mathbb C\Gamma$-colinear section to $p$,  which thus satisfies, for any $x \in \mathbb C\Gamma$, 
$$\sigma(x)_{(2)} \otimes S(\sigma(x)_{(1)}) \sigma(x)_{(3)} \otimes p(\sigma(x)_{(4)})=
\sigma(x_{(1)}) \otimes 1 \otimes  x_{(2)}$$
and hence in particular
$$\sigma(x)_{(2)} \otimes S(\sigma(x)_{(1)}) \sigma(x)_{(3)} =
\sigma(x) \otimes 1 $$
Hence we can take $t = \sigma(\tau)$.
\end{proof}

\begin{proposition}\label{prop:retract}
 Let $V,W \in \yd_A^A$, and let $i :V \rightarrow W$ be an injective morphism of Yetter-Drinfeld modules over $A$.
Assume that the following conditions hold.
\begin{enumerate}
 \item We have $\alpha_V(V) \subset V \otimes B$ and $\alpha_W(W)\subset W \otimes B$, so that $V$ and $W$ are in fact $B$-comodules.
\item The exists a $B$-linear and $B$-colinear map $r : W \rightarrow V$ such that $ri={\rm id}_V$
\end{enumerate}
Then there exists an $A$-linear and $A$-colinear map $\tilde{r} : W \rightarrow V$ such that $\tilde{r}i={\rm id}_V$.
\end{proposition}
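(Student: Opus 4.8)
The plan is to build $\tilde r$ by averaging $r$ against the element $t\in A$ of Lemma \ref{lemm:integral}: averaging restores $A$-linearity, and the special property of $t$ ensures that colinearity is preserved. Explicitly, I would set
\[
\tilde r(x)=r\bigl(x\cdot S(t_{(1)})\bigr)\cdot t_{(2)}
\]
and then check three things: $A$-linearity of $\tilde r$, the identity $\tilde r i=\id_V$, and $A$-colinearity of $\tilde r$.

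$A$-linearity is immediate from Lemma \ref{lemm:integral}, since $r$ is $B$-linear. For the retraction property, using that $i$ is $A$-linear and $ri=\id_V$ one gets
\[
\tilde r(i(v))=r\bigl(i(v\cdot S(t_{(1)}))\bigr)\cdot t_{(2)}=\bigl(v\cdot S(t_{(1)})\bigr)\cdot t_{(2)}=v\cdot\bigl(S(t_{(1)})t_{(2)}\bigr)=v,
\]
since $S(t_{(1)})t_{(2)}=\varepsilon(t)1=1$, the scalar $\varepsilon(t)=\varepsilon(p(t))$ being $1$.

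The substantial point is $A$-colinearity. Here I would first observe that, by hypothesis (1), the $A$-coactions of $V$ and $W$ take values in $V\otimes B$ and $W\otimes B$, so that a linear map between $V$ and $W$ is $A$-colinear if and only if it is $B$-colinear; in particular $r$ is $A$-colinear. To verify $\alpha_V(\tilde r(x))=\tilde r(x_{(0)})\otimes x_{(1)}$, I would expand $\alpha_V(\tilde r(x))$ successively: apply the Yetter--Drinfeld axiom for $V$ to move $\alpha_V$ past the right action of $t_{(2)}$, then use the $A$-colinearity of $r$, then apply the Yetter--Drinfeld axiom for $W$ to compute the coaction on $x\cdot S(t_{(1)})$. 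The resulting expression carries a factor of the shape $S(t_{(j+1)})S^2(t_{(j)})=S\bigl(S(t_{(j)})t_{(j+1)}\bigr)$, which the antipode axiom collapses to a scalar; after this cancellation one is left with
\[
\alpha_V(\tilde r(x))=r\bigl(x_{(0)}\cdot S(t_{(2)})\bigr)\cdot t_{(3)}\ \otimes\ x_{(1)}\cdot\bigl(S(t_{(1)})t_{(4)}\bigr),
\]
where $t_{(1)}\otimes t_{(2)}\otimes t_{(3)}\otimes t_{(4)}$ stands for the relevant iterated coproduct of $t$. Finally I would feed in the defining property $t_{(2)}\otimes S(t_{(1)})t_{(3)}=t\otimes 1$ of Lemma \ref{lemm:integral}: applying $\Delta\otimes\id$ to it yields $t_{(2)}\otimes t_{(3)}\otimes S(t_{(1)})t_{(4)}=t_{(1)}\otimes t_{(2)}\otimes 1$, and substituting this into the display turns $S(t_{(1)})t_{(4)}$ into $1$ and $(t_{(2)},t_{(3)})$ into $(t_{(1)},t_{(2)})$, so that the right-hand side becomes $r\bigl(x_{(0)}\cdot S(t_{(1)})\bigr)\cdot t_{(2)}\otimes x_{(1)}=\tilde r(x_{(0)})\otimes x_{(1)}$, as wanted.

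The main obstacle is exactly this colinearity computation. It demands careful bookkeeping of the several nested Sweedler expansions of $t$, and it relies crucially on $t$ having been chosen with the full property $t_{(2)}\otimes S(t_{(1)})t_{(3)}=t\otimes1$, rather than just with $p(t)$ an integral: this is precisely what makes the averaged map colinear. The appearance of the $S^2$-term (harmless here thanks to the antipode axiom) is the only other subtlety.
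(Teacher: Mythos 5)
Your proposal is correct and follows essentially the same route as the paper: the same averaged map $\tilde r(x)=r(x\cdot S(t_{(1)}))\cdot t_{(2)}$, with $A$-linearity from Lemma \ref{lemm:integral}, and $A$-colinearity verified by the same chain (Yetter--Drinfeld condition, colinearity of $r$, collapse of $S(t_{(j+1)})S^{2}(t_{(j)})$ via the antipode axiom, and finally the identity $t_{(2)}\otimes t_{(3)}\otimes S(t_{(1)})t_{(4)}=t_{(1)}\otimes t_{(2)}\otimes 1$ obtained by applying $\Delta\otimes\id$ to the defining property of $t$). The intermediate expression you display matches the paper's computation, so the argument goes through as you describe.
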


\begin{proof}
Let $t \in A$ as in the previous lemma.
We define  $\tilde{r} : W \rightarrow V$ by 
$$\tilde{r}(w) = r(w\cdot S(t_{(1)}))\cdot t_{(2)}$$
It is immediate to check that $\tilde{r}i={\rm id}_V$, and it follows from the previous lemma that $\tilde{r}$ is $A$-linear. It thus remains to check that $\tilde{r}$ is $A$-colinear. Let $w \in W$.
We have
\begin{align*}
 \tilde{r}(w)_{(0)} \otimes \tilde{r}(w)_{(1)}& = (r(w\cdot S(t_{(1)}))\cdot t_{(2)})_{(0)} \otimes (r(w\cdot S(t_{(1)}))\cdot t_{(2)})_{(1)} \\
&=r(w\cdot S(t_{(1)}))_{(0)}\cdot t_{(3)} \otimes S(t_{(2)})r(w\cdot S(t_{(1)}))_{(1)} t_{(4)}\\
&=r((w\cdot S(t_{(1)}))_{(0)})\cdot t_{(3)} \otimes S(t_{(2)})(w\cdot S(t_{(1)}))_{(1)} t_{(4)}\\
& = r(w_{(0)}\cdot S(t_{(2)}))\cdot t_{(5)} \otimes S(t_{(4)})S^{2}(t_{(3)})w_{(1)} S(t_{(1)}) t_{(6)}\\
& = r(w_{(0)}\cdot S(t_{(2)}))\cdot t_{(3)} \otimes w_{(1)}S(t_{(1)})t_{(4)} \\
& = r(w_{(0)}\cdot S(t_{(1)}))\cdot t_{(2)} \otimes w_{(1)} \\
& =  \tilde{r}(w_{(0)}) \otimes w_{(1)}
\end{align*}
where we have used the Yetter-Drinfeld condition, the $B$-colinearity of $r$ and the fact that $t_{(2)}\otimes S(t_{(1)})t_{(3)}=t\otimes 1$ which gives 
$$t_{(2)}\otimes t_{(3)} \otimes S(t_{(1)})t_{(4)}=t_{(1))} \otimes t_{(2)}\otimes 1$$
 Hence $\tilde{r}$ is $A$-colinear, and this concludes the proof.
\end{proof}

\begin{theorem}\label{thm:cdexactcocentral}
Let  $\C \to B \rightarrow A \rightarrow \C \Gamma \to \C$ be a cocentral exact sequence of Hopf algebras, with $\Gamma$ a finite abelian group. Then ${\rm cd}(A)={\rm cd}(B)$, and if we assume that $A$ is cosemisimple, we have 
 ${\rm cd}_{\rm GS}(A)={\rm cd}_{\rm GS}(B)$ as well.
\end{theorem}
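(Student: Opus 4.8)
The plan is to deduce the two equalities from the restriction/induction machinery of Section \ref{sec:ydgs}, together with the structural results just proved (Proposition \ref{prop:funydfree}, Lemma \ref{lemm:integral}, Proposition \ref{prop:retract}). For the Hochschild statement ${\rm cd}(A)={\rm cd}(B)$, recall that ${\rm cd}(A)={\rm pd}_A(\C_\varepsilon)$ and ${\rm cd}(B)={\rm pd}_B(\C_\varepsilon)$. I would first show ${\rm cd}(B)\leq {\rm cd}(A)$: since the extension $\C\to B\to A\to \C\Gamma\to\C$ has $A$ faithfully flat as a $B$-module (a Hopf algebra is faithfully flat over a Hopf subalgebra in this setting, or at least free by Nichols–Zoeller-type arguments when things are finite over $B$; in any case $A$ is flat over $B$ here), a projective resolution of $\C_\varepsilon$ over $A$ restricts to a projective resolution over $B$ because $A$ is projective — indeed free — as a right $B$-module, giving ${\rm pd}_B(\C_\varepsilon)\leq{\rm pd}_A(\C_\varepsilon)$. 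Conversely, for ${\rm cd}(A)\leq{\rm cd}(B)$, given a $B$-projective resolution $P_\bullet\to\C_\varepsilon$ one induces up to $P_\bullet\otimes_B A\to \C_\varepsilon\otimes_B A=\C\Gamma$, an exact sequence of $A$-modules with $A$-projective terms; then one uses that $\C\Gamma$ is a semisimple $\C$-algebra and that $\C_\varepsilon$ is a direct summand of $\C\Gamma$ as an $A$-module (via the integral $\tau$ of Lemma \ref{lemm:integral}, exactly the averaging argument of Proposition \ref{prop:moy}) to conclude ${\rm pd}_A(\C_\varepsilon)\leq{\rm pd}_A(\C\Gamma)\leq{\rm pd}_B(\C_\varepsilon)$.

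For the Gerstenhaber-Schack statement, assuming $A$ (hence $B$) cosemisimple, I would use the identification ${\rm cd}_{\rm GS}(A)={\rm pd}_{\yd_A^A}(\C)$ and the analogous formula for $B$. One inequality, ${\rm cd}_{\rm GS}(B)\leq{\rm cd}_{\rm GS}(A)$, comes from taking a resolution of $\C$ by free (or relative projective) Yetter-Drinfeld modules over $A$, applying the exact functor $(-)^{{\rm co}\C\Gamma}:\yd_A^A\to\yd_B^B$ of Proposition \ref{prop:funydfree}, and noting that this functor sends relative projectives to relative projectives and sends $\C$ to $\C$; hence we obtain a length-preserving relative projective resolution of $\C$ in $\yd_B^B$, so ${\rm pd}_{\yd_B^B}(\C)\leq{\rm pd}_{\yd_A^A}(\C)$. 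For the reverse inequality I would exploit Proposition \ref{prop:adjyd}: the induction functor $\yd_B^B\to\yd_A^A$, $V\mapsto V\otimes_B A$, is left adjoint to the exact restriction functor, the latter preserving injectives, so ${\rm Ext}^n_{\yd_A^A}(V\otimes_B A, X)\simeq{\rm Ext}^n_{\yd_B^B}(V, X^{(B)})$. Applying this with $V=\C$ gives ${\rm Ext}^n_{\yd_A^A}(\C\otimes_B A, X)\simeq{\rm Ext}^n_{\yd_B^B}(\C, X^{(B)})$, and since $\C\otimes_B A\cong \C\Gamma$ with its natural Yetter-Drinfeld structure (a group algebra of a finite group, a semisimple-flavored object), I would argue that $\C$ is a direct summand of $\C\Gamma$ in $\yd_A^A$ — here Proposition \ref{prop:retract} is the key: the inclusion $\C\hookrightarrow\C\Gamma$ admits a $B$-linear $B$-colinear retraction (averaging over $\Gamma$), which by Proposition \ref{prop:retract} upgrades to an $A$-linear $A$-colinear retraction. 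Consequently ${\rm Ext}^n_{\yd_A^A}(\C,X)$ is a direct summand of ${\rm Ext}^n_{\yd_A^A}(\C\Gamma,X)\simeq{\rm Ext}^n_{\yd_B^B}(\C,X^{(B)})$, which vanishes for $n>{\rm cd}_{\rm GS}(B)$; hence ${\rm cd}_{\rm GS}(A)\leq{\rm cd}_{\rm GS}(B)$.

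I expect the main obstacle to be the reverse inequalities, specifically making rigorous the claim that $\C$ is a direct summand of $\C\Gamma=\C\otimes_B A$ in the appropriate category (of $A$-modules for the Hochschild part, of Yetter-Drinfeld modules over $A$ for the GS part), because a priori the natural retraction only exists at the level of $B$-structure. This is precisely what the technical scaffolding — Lemma \ref{lemm:integral} producing an element $t$ with $p(t)=\tau$ and $t_{(2)}\otimes S(t_{(1)})t_{(3)}=t\otimes 1$, and Proposition \ref{prop:retract} promoting $B$-(co)linear retractions to $A$-(co)linear ones — is designed to overcome, so the work is largely in citing these correctly rather than reproving them. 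A secondary point requiring care is the flatness/projectivity of $A$ over $B$ needed both to restrict projective resolutions and to invoke Proposition \ref{prop:adjyd}; in the cocentral exact sequence with $\Gamma$ finite this holds (indeed $A$ is a finitely generated projective, in fact free, $B$-module of rank $|\Gamma|$), and I would state this explicitly at the outset so that all subsequent exactness and adjunction arguments apply. Finally, the identification of $\C\otimes_B A$ with $\C\Gamma$ as a Yetter-Drinfeld module over $A$ and the compatibility of the averaging retraction with the coaction need to be checked, but these are routine Sweedler-notation verifications of the type already carried out in Lemma \ref{lemm:coinvfree} and Proposition \ref{prop:retract}.
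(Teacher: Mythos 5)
Your overall strategy is sound, and for the harder inequality ${\rm cd}_{\rm GS}(A)\leq{\rm cd}_{\rm GS}(B)$ it is genuinely different from the paper's. The paper does not invoke the adjunction of Proposition \ref{prop:adjyd} at this point: it takes a resolution of $\C$ by \emph{free} Yetter-Drinfeld modules over $A$, applies $(-)^{{\rm co}\C\Gamma}$ to get a resolution $\cdots\to V_i\boxtimes A\to\cdots\to\C$ whose terms are simultaneously free over $A$ and projective over $B$ (Lemma \ref{lemm:coinvfree}, Proposition \ref{prop:funydfree}), truncates at $m={\rm cd}_{\rm GS}(B)$, and applies Proposition \ref{prop:retract} to the kernel $K\hookrightarrow V_m\boxtimes A$ to promote the $B$-splitting to an $A$-splitting, producing an explicit length-$m$ projective resolution over $A$. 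You instead run the classical finite-index argument: the Shapiro isomorphism ${\rm Ext}^n_{\yd_A^A}(\C\otimes_B A,X)\simeq{\rm Ext}^n_{\yd_B^B}(\C,X^{(B)})$ together with the claim that $\C$ splits off $\C\otimes_B A$ in $\yd_A^A$. This is arguably cleaner (no truncation argument), at the price of only giving the vanishing of ${\rm Ext}$ rather than a finite projective resolution. The first inequality ${\rm cd}_{\rm GS}(B)\leq{\rm cd}_{\rm GS}(A)$ coincides with the paper's, and your direct reconstruction of the Hochschild statement (which the paper simply quotes from \cite[Proposition 3.2]{bi16}) is correct, the required freeness of $A$ over $B$ being guaranteed by Schneider's theorem since $\C\Gamma$ is finite dimensional.

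The one point that must be repaired is the splitting of $\C$ inside $\C\otimes_B A$. The map $1\mapsto 1\otimes_B 1$ is \emph{not} $A$-linear: in $\C\otimes_B A\cong A/B^+A\cong\C\Gamma$ one has $(1\otimes_B 1)\cdot a=\overline{a}\neq\varepsilon(a)\,\overline{1}$ unless $\Gamma$ is trivial. Hence it is not a morphism in $\yd_A^A$, and Proposition \ref{prop:retract}, whose hypothesis is an injective morphism of Yetter-Drinfeld modules over $A$ admitting a $B$-(co)linear retraction, cannot be applied to ``the inclusion'' as you state it. The correct embedding sends $1$ to the class of $t$, with $p(t)=\tau=\frac{1}{|\Gamma|}\sum_{g\in\Gamma}g$ as in Lemma \ref{lemm:integral}. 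Once this is done you in fact do not need Proposition \ref{prop:retract} here at all: since $p$ is cocentral, each $A_g$ is a subcoalgebra, so in the coaction $\overline{a_{(2)}}\otimes S(a_{(1)})a_{(3)}$ the middle leg survives only through $\varepsilon$ and the coaction on $\C\otimes_B A$ is trivial, while the $A$-module structure factors through the semisimple algebra $\C\Gamma$; thus $\C\otimes_B A$ decomposes in $\yd_A^A$ as the direct sum of the one-dimensional characters of $\Gamma$, one of which is the trivial Yetter-Drinfeld module. With this correction your argument goes through.
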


\begin{proof}
 The identity ${\rm cd}(A)={\rm cd}(B)$ is a particular case of \cite[Proposition 3.2]{bi16} (the above  exact sequence being automatically strict, see \cite{bi16} for details). Assume that $A$ is cosemisimple, and consider a resolution of the trivial Yetter-Drinfeld module
$$\cdots  \rightarrow P_n \rightarrow P_{n-1} \rightarrow \cdots\rightarrow  P_1 \rightarrow P_0 \rightarrow \mathbb C$$
by (relative) projective Yetter-Drinfeld over $A$. The exact functor $(-)^{{\rm co}\C\Gamma} : \yd^A_A  \longrightarrow \yd^B_B$ from Proposition \ref{prop:rescoinv}
transforms, by Proposition \ref{prop:funydfree}, this resolution into a resolution of $\mathbb C$ by (relative) projective Yetter-Drinfeld modules over $B$. It follows that  ${\rm cd}_{\rm GS}(B)\leq{\rm cd}_{\rm GS}(A)$.

To prove the converse inequality, we can assume that $m={\rm cd}_{\rm GS}(B)$ is finite.
 Consider a resolution of the trivial Yetter-Drinfeld module
$$\cdots  \rightarrow F_n \rightarrow F_{n-1} \rightarrow \cdots\rightarrow  F_1 \rightarrow F_0 \rightarrow \mathbb C$$
by free Yetter-Drinfeld modules over $A$. The exact functor $(-)^{{\rm co}\C\Gamma} : \yd^A_A  \longrightarrow \yd^B_B$ from Proposition \ref{prop:rescoinv}
transforms, by Lemma \ref{lemm:coinvfree} and 
Proposition \ref{prop:funydfree}, this resolution into a resolution of $\mathbb C$
of type
$$\cdots  \rightarrow V_n \boxtimes A \rightarrow V_{n-1}\boxtimes A \rightarrow \cdots\rightarrow  V_1 \boxtimes A\rightarrow V_0 \boxtimes A\rightarrow \mathbb C$$
where $V_0, V_1, \ldots$ are comodules over $B$, $V_0\boxtimes A, V_1\boxtimes A, \ldots$ are free Yetter-Drinfeld modules over $A$ (and projective over $B$), and the involved linear map are morphisms of Yetter-Drinfeld modules over $A$. Since $m={\rm cd}_{\rm GS}(B)$,  a standard argument yields an exact sequence of Yetter-Drinfeld modules over $B$, and hence over $A$
$$0 \rightarrow K \overset{i}\rightarrow V_m \boxtimes A \rightarrow V_{m-1}\boxtimes A \rightarrow \cdots\rightarrow  V_1 \boxtimes A\rightarrow V_0 \boxtimes A\rightarrow \mathbb C$$
together with $r : V_m \boxtimes A \rightarrow K$, a morphism of Yetter-Drinfeld modules over $B$ such that $ri = {\rm id}_K$. Proposition \ref{prop:retract} gives $\tilde{r} : V_m \boxtimes A \rightarrow K$, a morphism of Yetter-Drinfeld  
modules over $A$ such that $\tilde{r}i={\rm id}_K$. We thus get, since a direct summand of a projective is projective, a length $m$ resolution of $\mathbb C$ by projective Yetter-Drinfeld modules over $A$, and we conclude that ${\rm cd}_{\rm GS}(A)\leq m$, as required.
\end{proof}

\subsection{Graded twisting} 
Let $A$ be a Hopf algebra and let $\Gamma$ be a  group. Recall \cite{bny} that an invariant cocentral action of $\Gamma$ on $A$ is a pair $(p, \alpha)$ where
\begin{itemize}
\item $p\colon A \rightarrow \mathbb C\Gamma$ is a surjective cocentral Hopf algebra map,
\item $\alpha\colon \Gamma \rightarrow {\rm Aut}_{\rm Hopf}(A)$ is an action of $\Gamma$ by Hopf algebra automorphisms on $A$, with $p\alpha_g=p$ for all $g \in \Gamma$.
\end{itemize}
The Hopf algebra map $p$ induces a $\Gamma$-grading on $A$
$$A = \bigoplus_{g \in \Gamma} A_g, \quad A_g=\{ a \in A \ | \ p(a_{(1)}) \otimes a_{(2)}= g \otimes 1 \}$$ 
and the last condition is equivalent to $\alpha_g(A_h)= A_h$ for all $g,h \in \Gamma$. When $(p,\alpha)$ is such an action, the graded twisting $A^{t,\alpha}$ of $A$ is the Hopf subalgebra
$$
A^{t, \alpha}=\sum_{g \in \Gamma} A_g \otimes g\subset A \rtimes \Gamma,
$$
of the crossed product Hopf algebra $A \rtimes \Gamma$. Notice that the coalgebras $A$ and $A^{t, \alpha}$ are isomorphic.

With these definitions, we are ready to prove the following result.

\begin{theorem}\label{thm:cdgrad}
 Let $A$, $B$ be Hopf algebras, and assume that $B$ is a graded twisting of $A$ by a finite abelian group. Then 
${\rm cd}(A)={\rm cd}(B)$, and if we assume that $A$ is cosemisimple, we have 
 ${\rm cd}_{\rm GS}(A)={\rm cd}_{\rm GS}(B)$ as well.
\end{theorem}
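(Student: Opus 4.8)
The plan is to reduce Theorem \ref{thm:cdgrad} to Theorem \ref{thm:cdexactcocentral} by exhibiting, from the data of an invariant cocentral action $(p,\alpha)$ of a finite abelian group $\Gamma$ on $A$, a cocentral exact sequence
$$\C \to A_e \to A^{t,\alpha} \to \C\Gamma \to \C,$$
where $A_e = A_e \otimes e$ is the degree-$e$ component of $A^{t,\alpha}$, and to identify the Hopf subalgebra $A_e \subset A^{t,\alpha}$ with the degree-$e$ component $A_e \subset A$, which is itself a Hopf subalgebra of $A$. So first I would recall (or re-derive) from \cite{bny} the structure of $A^{t,\alpha}$ as a subalgebra of the crossed product $A \rtimes \Gamma$: the map $\tilde p\colon A^{t,\alpha} \to \C\Gamma$ sending $a \otimes g \mapsto \varepsilon(a) g$ is a surjective Hopf algebra map, it is cocentral because $\Gamma$ is abelian (so $\C\Gamma$ is cocommutative), and its kernel is exactly $A_e \otimes e$. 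One then checks the three defining conditions of an exact sequence of Hopf algebras for $\C \to A_e \to A^{t,\alpha} \xrightarrow{\tilde p} \C\Gamma \to \C$; condition (2) is the grading statement $\ker \tilde p = A^{t,\alpha}(A_e)^+ = (A_e)^+ A^{t,\alpha}$, which follows from the fact that $A^{t,\alpha}$ is strongly $\Gamma$-graded (each $A_g$ contains an invertible group-like-type element coming from $A \rtimes \Gamma$), and condition (3) identifies $A_e$ as the coinvariants $(A^{t,\alpha})^{\co \C\Gamma}$, again immediate from the grading.

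Next I would observe that, as an \emph{algebra}, $A_e \otimes e \subset A^{t,\alpha}$ is isomorphic to $A_e \subset A$ (the twist only affects how the graded pieces multiply with each other, not the multiplication within the neutral component), and as a coalgebra $A^{t,\alpha} \cong A$ canonically, so the Hopf subalgebra $A_e \subset A^{t,\alpha}$ is isomorphic to the Hopf subalgebra $A_e \subset A$. The point is then that $A$ also sits in a cocentral exact sequence $\C \to A_e \to A \xrightarrow{p} \C\Gamma \to \C$ with the \emph{same} neutral component $A_e$: indeed condition (2) here is precisely the strong-grading statement for the original grading on $A$ induced by $p$, and condition (3) identifies $A_e = A^{\co\C\Gamma}$. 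Since both exact sequences have abelian $\Gamma$ and are cocentral, Theorem \ref{thm:cdexactcocentral} applies to each and gives
$${\rm cd}(A) = {\rm cd}(A_e) = {\rm cd}(A^{t,\alpha}) = {\rm cd}(B),$$
and likewise, if $A$ is cosemisimple — in which case $A_e$ is cosemisimple (a Hopf subalgebra of a cosemisimple Hopf algebra, hence cosemisimple), and $A^{t,\alpha}=B$ is cosemisimple too, being built from the cosemisimple $A_e$ by the same finite grading — we get ${\rm cd}_{\rm GS}(A) = {\rm cd}_{\rm GS}(A_e) = {\rm cd}_{\rm GS}(B)$.

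The step I expect to require the most care is verifying that the neutral component $A_e \subset A^{t,\alpha}$ really satisfies the exactness conditions relative to $\tilde p$, in particular that the grading on $A^{t,\alpha}$ is strong so that $\ker\tilde p = (A_e)^+ A^{t,\alpha}$; this is where the finiteness of $\Gamma$ and the invertibility of the grading elements in $A\rtimes\Gamma$ enter, and it must be checked that these invertible elements actually lie in $A^{t,\alpha}$ rather than merely in $A\rtimes\Gamma$. One also has to be mildly careful about which side the coinvariants are taken on and that the two characterizations in condition (3) agree, but cocentrality of $\tilde p$ makes the left and right coinvariants coincide. Everything else — cosemisimplicity of $A_e$, the algebra identification of the neutral components, the coalgebra isomorphism $A^{t,\alpha}\cong A$ — is either standard or already recorded in \cite{bny}. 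Once the exact sequence is in place, the theorem is an immediate double application of Theorem \ref{thm:cdexactcocentral}.
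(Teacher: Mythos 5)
Your proposal follows exactly the paper's route: the paper simply invokes \cite{bny2} for the fact that $A$ and $B=A^{t,\alpha}$ fit into cocentral exact sequences $\C \to A_1 \to A \to \C\Gamma \to \C$ and $\C \to A_1 \to B \to \C\Gamma \to \C$ with the \emph{same} neutral component $L=A_1$, and then applies Theorem \ref{thm:cdexactcocentral} twice --- which is precisely your plan, with the verification of the exact sequences written out instead of cited. The one sub-step to correct is your justification of condition (2) (strong grading) for $\tilde p$: the invertible elements $1\otimes g$ of $A\rtimes\Gamma$ do \emph{not} lie in $A^{t,\alpha}$ for $g\neq e$ (since $1\in A_e$, not $A_g$), and the components $A_g$ need not contain any invertible element (e.g.\ the generators $a_{ij}$ of $\mathcal B(E)$ in the degree-$g$ part of $\mathcal B(E)*\mathcal B(E)$); you rightly flag this as the delicate point, but the proposed mechanism fails. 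The strong grading instead holds for the standard Hopf-algebraic reason: surjectivity of the cocentral map yields, for each $g$, some $a\in A_g$ with $\varepsilon(a)=1$; since $\Delta(A_g)\subset A_g\otimes A_g$ and $S(A_g)\subset A_{g^{-1}}$, the identity $S(a_{(1)})a_{(2)}=\varepsilon(a)1$ gives $1\in A_{g^{-1}}A_g$ and hence $A_{g^{-1}}A_g=A_e$. With that repair (or simply the citation to \cite{bny2}, as the paper does), your argument is complete; the remaining points (the algebra identification of the neutral components, the coalgebra isomorphism $A^{t,\alpha}\cong A$ giving cosemisimplicity of $B$, and cosemisimplicity of the Hopf subalgebra $A_e$) are all as you say.
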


\begin{proof}
Let $\Gamma$ be the twisting group. It pointed out in \cite{bny2} that $A$ and  $B$ fit into  cocentral exact sequences
$$\C \to L \rightarrow A \rightarrow \C \Gamma \to \C, \quad \C \to L \rightarrow B \rightarrow \C \Gamma \to \C$$
for the Hopf algebra $L=A_1$. We thus have, by Theorem \ref{thm:cdexactcocentral}, ${\rm cd}(A)={\rm cd}(L)={\rm cd}(B)$ and ${\rm cd}_{\rm GS}(A)={\rm cd}_{\rm GS}(L)={\rm cd}_{\rm GS}(B)$ if $A$ (and hence $B$) is cosemisimple. 
\end{proof}

See \cite{bny} for examples of graded twistings. The example we have in mind is the following one.

Let $E\in {\rm GL}_n(\mathbb C)$ and consider the Hopf algebra $\mathcal B(E)$ defined by Dubois-Violette and Launer \cite{dvl}: $\mathcal B(E)$ is the algebra generated by $a_{ij}$, $1 \leq i,j \leq n$, subject to the relations
$E^{-1}a^tEa=I_n=aE^{-1}a^tE$, where $a$ is the matrix $(a_{ij})$ (for an appropriate matrix $E_q$, one gets  $\mathcal O_q({\rm SL}_2(\mathbb C))$).

 Recall now \cite{wa95} that if $A, B$ are Hopf algebras, the free product algebra $A*B$ has a unique Hopf algebra structure such that the canonical morphisms $A \rightarrow A*B$ and $B\rightarrow A*B$ are Hopf algebra maps, and consider the free product Hopf algebra $\mathcal B(E) * \mathcal B(E)$.
We have a cocentral Hopf algebra map
\begin{align*}
 \mathcal B(E) * \mathcal B(E) & \longrightarrow \mathbb C \mathbb Z_2, \ \mathbb Z_2 =\langle g \rangle \\
a_{ij}^{(1)}, a_{ij}^{(2)} & \longmapsto \delta_{ij} g
\end{align*}
where the superscript refers to the numbering of copies inside the free product, and we have an action of $\mathbb Z_2$ of $\mathcal B(E) * \mathcal B(E)$, given by the Hopf algebra automorphism that exchanges the copies inside the free product. We get in this way an invariant cocentral action of $\mathbb Z_2$ on $\mathcal B(E) * \mathcal B(E)$, and we form the graded twisting $(\mathcal B(E) * \mathcal B(E))^t$.
Now for $F=E^{t}E^{-1}$, we have, by \cite{bny}, a Hopf algebra isomorphism
\begin{align*}
H(F) &\longrightarrow (\mathcal B(E) * \mathcal B(E))^t \\
u, \ v  &\longmapsto a^{(1)} \otimes g, \  E^t a^{(2)} (E^{-1})^t \otimes g
\end{align*}
Using Theorem \ref{thm:cdexactcocentral}, 
we get:

\begin{corollary}\label{cor:cdtw}
For $F=E^{t}E^{-1}$, we have ${\rm cd}(H(F))={\rm cd}(\mathcal B(E) * \mathcal B(E))$, and if $F$ is generic, then 
${\rm cd}_{\rm GS}(H(F))={\rm cd}_{\rm GS}(\mathcal B(E) * \mathcal B(E))$.
\end{corollary}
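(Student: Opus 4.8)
The plan is to deduce the statement essentially for free from Theorem~\ref{thm:cdgrad} together with the description of $H(F)$ as a graded twisting recalled just above. First I would fix $E \in {\rm GL}_n(\mathbb C)$ with $F = E^t E^{-1}$ and invoke the Hopf algebra isomorphism $H(F) \cong (\mathcal B(E) * \mathcal B(E))^t$ of \cite{bny}, where the graded twisting is taken with respect to the invariant cocentral action of $\mathbb Z_2$ described above: the cocentral surjection $\mathcal B(E)*\mathcal B(E) \to \mathbb C\mathbb Z_2$ sending $a^{(1)}_{ij}, a^{(2)}_{ij} \mapsto \delta_{ij}g$, together with the automorphism exchanging the two copies of $\mathcal B(E)$ inside the free product. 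Since $\mathbb Z_2$ is a finite abelian group, Theorem~\ref{thm:cdgrad}, applied with $A = \mathcal B(E)*\mathcal B(E)$ and $B = (\mathcal B(E)*\mathcal B(E))^t$, gives ${\rm cd}(\mathcal B(E)*\mathcal B(E)) = {\rm cd}\big((\mathcal B(E)*\mathcal B(E))^t\big)$; composing with the isomorphism $H(F) \cong (\mathcal B(E)*\mathcal B(E))^t$ yields the first equality ${\rm cd}(H(F)) = {\rm cd}(\mathcal B(E)*\mathcal B(E))$, with no hypothesis on $F$ beyond being an asymmetry.

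For the Gerstenhaber--Schack statement I would use the cosemisimple half of Theorem~\ref{thm:cdgrad}, which requires the \emph{untwisted} algebra $\mathcal B(E)*\mathcal B(E)$ to be cosemisimple. Here is where the genericity hypothesis enters: by \cite{bi07}, $F$ generic implies $H(F)$ is cosemisimple, and since a graded twisting $A^{t,\alpha}$ is isomorphic to $A$ as a coalgebra (as noted above), while cosemisimplicity is a property of the underlying coalgebra, it transfers from $H(F) \cong (\mathcal B(E)*\mathcal B(E))^t$ to $\mathcal B(E)*\mathcal B(E)$. With cosemisimplicity of $\mathcal B(E)*\mathcal B(E)$ in hand, Theorem~\ref{thm:cdgrad} gives ${\rm cd}_{\rm GS}(\mathcal B(E)*\mathcal B(E)) = {\rm cd}_{\rm GS}\big((\mathcal B(E)*\mathcal B(E))^t\big) = {\rm cd}_{\rm GS}(H(F))$, as claimed. (One could equally invoke Theorem~\ref{thm:cdexactcocentral} directly, using the cocentral exact sequences $\C \to \mathcal B(E)*\mathcal B(E)_1 \to \mathcal B(E)*\mathcal B(E) \to \C\mathbb Z_2 \to \C$ and likewise for $H(F)$, since Theorem~\ref{thm:cdgrad} is itself only a repackaging of the former.)

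This corollary is a bookkeeping step rather than a genuinely hard point: once Theorem~\ref{thm:cdgrad} and the graded-twisting presentation of $H(F)$ are available, there is nothing left to prove. The only subtlety worth flagging is the cosemisimplicity transfer in the GS case — it is important to use that a graded twisting shares its coalgebra with the source algebra (so that cosemisimplicity passes across a \emph{coalgebra} isomorphism, not a Hopf algebra one), rather than trying to argue cosemisimplicity of $\mathcal B(E)*\mathcal B(E)$ from scratch; alternatively one may recall that $\mathcal B(E)$ is cosemisimple under the relevant genericity condition on $E$ and that free products of cosemisimple Hopf algebras are cosemisimple, but the coalgebra-isomorphism route is the cleanest. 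The actual content then resides in the later Sections, where ${\rm cd}(\mathcal B(E)*\mathcal B(E))$ and ${\rm cd}_{\rm GS}(\mathcal B(E)*\mathcal B(E))$ are computed to be $3$, which combined with this corollary completes Theorem~\ref{thm:comput}.
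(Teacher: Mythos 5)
Your proposal is correct and follows the same route as the paper: the corollary is obtained by combining the isomorphism $H(F)\cong(\mathcal B(E)*\mathcal B(E))^t$ from \cite{bny} with Theorem \ref{thm:cdexactcocentral} (equivalently Theorem \ref{thm:cdgrad}), the genericity of $F$ supplying the cosemisimplicity needed for the Gerstenhaber--Schack half. Your remark that cosemisimplicity transfers across the coalgebra isomorphism between an algebra and its graded twisting is exactly the point the paper relies on implicitly.
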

 
 It thus remains, in order to prove Theorem \ref{thm:comput}, to discuss the cohomological dimensions of  a free product, the cohomological dimensions of $\mathcal B(E)$ being known \cite{bic}.

\section{Cohomologies of free products of Hopf algebras}\label{sec:free}

In this section we discuss the cohomologies of a free product of Hopf algebras.



\subsection{Hochschild cohomology}
We begin with Hochschild cohomology.  The following result generalizes the well-known one for group cohomology, see \cite{hs}, with essentially the same proof. 

\begin{theorem}\label{thm:freeext}
 Let $A$, $B$ be augmented algebras. We have for any right $A*B$-module $M$, and for any $n \geq 2$, a natural isomorphism
$${\rm Ext }_{A*B}^n(\mathbb C_\varepsilon, M) \simeq {\rm Ext }_{A}^n(\mathbb C_\varepsilon, M) \oplus {\rm Ext }_{B}^n(\mathbb C_\varepsilon, M)$$ where $M$ has the respective restricted $A$-module and $B$-module structures.
\end{theorem}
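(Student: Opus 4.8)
The plan is to build an explicit free resolution of $\C_\varepsilon$ over $A*B$ out of free resolutions of $\C_\varepsilon$ over $A$ and over $B$, mimicking the classical construction for free products of groups. First I would recall the algebraic input: if $A$ and $B$ are augmented algebras with augmentation ideals $A^+=\Ker(\varepsilon_A)$ and $B^+=\Ker(\varepsilon_B)$, then $A*B$ is augmented, its augmentation ideal $(A*B)^+$ decomposes, as a right $A*B$-module, as $A^+\otimes_A (A*B)\ \oplus\ B^+\otimes_B(A*B)$ — this is the Mayer--Vietoris type decomposition underlying the whole argument, and it comes from the normal-form description of the free product. Equivalently, there is a short exact sequence of right $A*B$-modules
\begin{equation*}
0 \longrightarrow A^+\otimes_A(A*B)\ \oplus\ B^+\otimes_B(A*B) \longrightarrow A*B \longrightarrow \C_\varepsilon \longrightarrow 0.
\end{equation*}
I would check this decomposition carefully, since it is the one genuinely combinatorial point and the main obstacle: one has to use that $A*B$ is free as a left $A$-module and as a left $B$-module (again by the normal form for free products), so that $A^+\otimes_A(A*B)$ and $B^+\otimes_B(A*B)$ really are submodules of $A*B$ and their sum is direct and equals $(A*B)^+$.

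Next, starting from length-wise free resolutions $P_\bullet \to \C_\varepsilon$ over $A$ and $Q_\bullet \to \C_\varepsilon$ over $B$, I would splice: induce up to get complexes $P_\bullet\otimes_A(A*B)$ and $Q_\bullet\otimes_B(A*B)$ of free $A*B$-modules (free because $A*B$ is free over $A$ and over $B$, hence induction sends free to free and is exact), take the truncations in degrees $\geq 1$, and glue them to the bar/syzygy resolution using the short exact sequence above. Concretely, in degrees $n\geq 2$ the resolution of $\C_\varepsilon$ over $A*B$ has $n$-th term $\big(P_n\otimes_A(A*B)\big)\oplus\big(Q_n\otimes_B(A*B)\big)$, while in degrees $0$ and $1$ one uses $A*B$ and $(A*B)^+ = A^+\otimes_A(A*B)\oplus B^+\otimes_B(A*B)$ together with the tail ends $P_1\otimes_A(A*B)\to P_0\otimes_A(A*B)\twoheadrightarrow A^+\otimes_A(A*B)$ and similarly for $B$. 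Exactness in positive degrees follows from exactness of induction and of the $P_\bullet$, $Q_\bullet$; exactness at the bottom is the short exact sequence above.

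Finally, apply $\Hom_{A*B}(-,M)$ to this resolution. For $n\geq 2$ the cochain groups split as
\begin{equation*}
\Hom_{A*B}\!\big(P_n\otimes_A(A*B),M\big)\oplus \Hom_{A*B}\!\big(Q_n\otimes_B(A*B),M\big)\cong \Hom_A(P_n,M)\oplus\Hom_B(Q_n,M)
\end{equation*}
by the adjunction (restriction--induction / tensor--hom), and the differentials respect this direct sum because the splicing maps that mix the two blocks live only in degrees $0$ and $1$. Hence in degrees $n\geq 2$ the cochain complex is the direct sum of the complexes computing $\Ext_A^n(\C_\varepsilon,M)$ and $\Ext_B^n(\C_\varepsilon,M)$, yielding $\Ext_{A*B}^n(\C_\varepsilon,M)\cong \Ext_A^n(\C_\varepsilon,M)\oplus\Ext_B^n(\C_\varepsilon,M)$; naturality in $M$ is clear since every identification used is natural. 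I expect the decomposition $(A*B)^+ \cong A^+\otimes_A(A*B)\oplus B^+\otimes_B(A*B)$ (and the underlying freeness of $A*B$ over each factor) to be the only step requiring real care — everything after that is bookkeeping with induced modules and adjunction, exactly as in the group case treated in \cite{hs}.
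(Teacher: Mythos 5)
Your proposal is correct and rests on the same pivot as the paper's proof, namely the decomposition $(A*B)^+\simeq\left(A^+\otimes_A(A*B)\right)\oplus\left(B^+\otimes_B(A*B)\right)$ of the augmentation ideal as a direct sum of induced modules, but the two implementations differ in both halves. For the decomposition itself, you propose a direct combinatorial verification via the normal form for free products (freeness of $A*B$ over each factor, identification of $A^+(A*B)$ and $B^+(A*B)$ inside $(A*B)^+$); the paper instead avoids all normal-form bookkeeping by computing $\Hom_{A*B}((A*B)^+,M)\simeq{\rm Der}(A*B,M)\simeq{\rm Der}(A,M)\oplus{\rm Der}(B,M)$ --- the splitting of derivations coming from the universal property of $A*B$ applied to triangular matrix algebras --- and then invoking the Yoneda lemma. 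The derivation route is slicker and is the one that transports verbatim to the Yetter--Drinfeld setting of Theorem \ref{thm:freegs}, which is why the paper takes it; your normal-form argument is more hands-on but equally valid (it is essentially Bergman's description of the coproduct, cf.\ \cite{ber}). For the deduction of the theorem, you splice induced resolutions into an explicit free resolution of $\C_\varepsilon$ over $A*B$, whereas the paper dimension-shifts twice along $0\to A^+\to A\to\C_\varepsilon\to 0$ and its free-product analogue and applies Eckmann--Shapiro, i.e.\ ${\rm Ext}^{n-1}_{A*B}(A^+\otimes_A(A*B),M)\simeq{\rm Ext}^{n-1}_A(A^+,M)$ using flatness of $A*B$ over $A$; these are the same computation packaged differently. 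One small slip to fix in your write-up: after normalizing $P_0=A$, the surjection onto $A^+\otimes_A(A*B)$ must come from $P_1\otimes_A(A*B)$, not from $P_0\otimes_A(A*B)$, so that the glued resolution has degree-$n$ term $(P_n\otimes_A(A*B))\oplus(Q_n\otimes_B(A*B))$ for all $n\geq 1$ and the cochain differentials are block diagonal from degree $1$ onward, which is exactly what makes the cohomology split in degrees $n\geq 2$.
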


The result and proof are probably well-known, but in lack of a convenient reference (however see \cite{tow} for the case of trivial coefficients), we will give the details, which also will be useful in view of the proof of an analogous result for Gerstenhaber-Schack cohomology in a forthcoming  subsection.

At the level of Hochschild cohomology, Theorem \ref{thm:freeext} gives the following result.

\begin{theorem}\label{thm:freehoc}
 Let $A$, $B$ be Hopf algebras algebras. We have for any  $A*B$-bimodule $M$, and for any $n \geq 2$, a natural isomorphism
$$H^n(A*B, M) \simeq H^n(A, M) \oplus H^n(B, M)$$ where $M$ has the respective restricted $A$-bimodule and $B$-bimodule structures.
\end{theorem}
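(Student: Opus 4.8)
The plan is to deduce Theorem~\ref{thm:freehoc} from Theorem~\ref{thm:freeext} by the standard reduction of Hochschild cohomology of a Hopf algebra to $\Ext$ over the algebra against the trivial module. First I would recall that for any Hopf algebra $H$ and any $H$-bimodule $M$, one has a natural isomorphism
$$H^n(H,M) \simeq \Ext_H^n(\C_\varepsilon, M^{\mathrm{ad}}),$$
where $M^{\mathrm{ad}}$ denotes the right $H$-module obtained by the (right) adjoint action $m \cdot h = S(h_{(1)}) m h_{(2)}$; equivalently, one uses that $M$ regarded as a bimodule is $H$-free (via the standard isomorphism $M \otimes H \cong {}_*M_* \otimes H$ of $H$-bimodules, coming from the Hopf algebra structure) so that projective resolutions of $\C_\varepsilon$ as right $H$-modules, tensored up, compute $H^*(H,M)$. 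This is the classical fact underlying the identity ${\rm cd}(H) = {\rm pd}_H(\C_\varepsilon)$ already quoted in the introduction.

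The key point making the reduction work uniformly for $A$, $B$, and $A*B$ is that the adjoint-action construction is compatible with restriction along the Hopf algebra maps $A \hookrightarrow A*B$ and $B \hookrightarrow A*B$: if $M$ is an $A*B$-bimodule, then restricting the $A*B$-adjoint module $M^{\mathrm{ad}}$ to $A$ gives exactly $(M|_A)^{\mathrm{ad}}$, the adjoint module of the restricted $A$-bimodule, because the antipode, comultiplication and multiplication of $A*B$ restrict to those of $A$ (the canonical map $A \to A*B$ being a Hopf algebra map), and similarly for $B$. So I would: (i) apply the isomorphism $H^n(A*B,M) \simeq \Ext_{A*B}^n(\C_\varepsilon, M^{\mathrm{ad}})$; (ii) apply Theorem~\ref{thm:freeext} with the $A*B$-module $M^{\mathrm{ad}}$ in the role of the coefficient module, giving for $n \geq 2$
$$\Ext_{A*B}^n(\C_\varepsilon, M^{\mathrm{ad}}) \simeq \Ext_A^n(\C_\varepsilon, M^{\mathrm{ad}}|_A) \oplus \Ext_B^n(\C_\varepsilon, M^{\mathrm{ad}}|_B);$$
(iii) identify $M^{\mathrm{ad}}|_A = (M|_A)^{\mathrm{ad}}$ and $M^{\mathrm{ad}}|_B = (M|_B)^{\mathrm{ad}}$ by the compatibility just noted; (iv) apply the isomorphism $\Ext_A^n(\C_\varepsilon, (M|_A)^{\mathrm{ad}}) \simeq H^n(A, M|_A)$ and likewise for $B$. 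Chaining these four natural isomorphisms yields the claim. Naturality in $M$ is inherited at each step, since each of the four isomorphisms is natural in the coefficient module.

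The main obstacle — really the only nontrivial issue — is making the first and last steps precise and checking that the bimodule-to-adjoint-module equivalence genuinely intertwines the two restriction functors; this is a routine but slightly fiddly verification with the free product's structure maps, and it is exactly the point where one uses that $A \to A*B$ and $B \to A*B$ are Hopf algebra maps rather than mere algebra maps (which is why the hypothesis is strengthened from "augmented algebras" in Theorem~\ref{thm:freeext} to "Hopf algebras" here). One should also note the degree restriction $n \geq 2$ is simply inherited from Theorem~\ref{thm:freeext} and needs no separate treatment. No essentially new idea beyond Theorem~\ref{thm:freeext} is required.
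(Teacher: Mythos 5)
Your proposal is correct and is essentially the paper's own proof: the paper likewise deduces Theorem~\ref{thm:freehoc} from Theorem~\ref{thm:freeext} via the natural isomorphism $H^*(A,M)\simeq \Ext_A^*(\C_\varepsilon,M')$ with $m\cdot a=S(a_{(1)})\, m\, a_{(2)}$, the compatibility with restriction along the Hopf algebra maps $A,B\to A*B$ being used implicitly. Your extra care in checking that the adjoint-module construction intertwines the restriction functors is a worthwhile explicit remark, but introduces nothing beyond what the paper's one-line argument relies on.
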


\begin{proof}
 This follows directly from Theorem \ref{thm:freeext}, since for a Hopf algebra $A$ and an $A$-bimodule $M$, we have $H^*(A,M) \simeq {\rm Ext}_A^*(\mathbb C_\varepsilon, M')$, where $M'$ has the right $A$-module structure given by $m\cdot a= S(a_{(1)})\cdot m \cdot a_{(2)}$. See e.g. \cite{bic} for this well-known fact.
\end{proof}

\begin{corollary}\label{cor:freecd}
 Let $A$, $B$ be non trivial Hopf algebras. We have
$${\rm cd}(A*B)= \begin{cases}
                  1 & \text{if ${\rm cd}(A)=0={\rm cd}(B)$} \\
{\rm max}({\rm cd}(A), {\rm cd}(B)) & \text{if ${\rm max}({\rm cd}(A), {\rm cd}(B)) \geq 1$} 
                 \end{cases}$$
\end{corollary}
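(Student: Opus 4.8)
The plan is to read off everything from Theorem \ref{thm:freehoc} once we handle the low-degree terms $n=0,1$ separately. First I would recall that for a nontrivial Hopf algebra $A$ one has $H^0(A,M) = M^A$ (the bimodule invariants) and that $\mathrm{cd}(A)\geq 0$ always, with $\mathrm{cd}(A)=0$ iff $A$ is cosemisimple/separable; in any case $\mathrm{cd}(A*B)\geq 1$ because $A*B$ is never separable when $A,B$ are nontrivial (the trivial module $\C_\varepsilon$ has a nonsplit extension: the standard Mayer--Vietoris/free-product argument shows $\mathrm{Ext}^1_{A*B}(\C_\varepsilon,\C_\varepsilon)\neq 0$, since $A*B$ surjects onto $\C\ZZ_2$-type or larger quotients, or more directly one uses the exact sequence underlying Theorem \ref{thm:freeext} in degree $1$). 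So the lower bound $\mathrm{cd}(A*B)\geq 1$ is automatic, which already settles the first case once we prove $\mathrm{cd}(A*B)\leq 1$ there, and contributes the ``$\geq 1$'' part of the second case when $\max(\mathrm{cd}(A),\mathrm{cd}(B))$ happens to be $0$ — though in the second case that maximum is assumed $\geq 1$, so the bound is subsumed.

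Next I would treat the upper bound. Suppose $d=\max(\mathrm{cd}(A),\mathrm{cd}(B))$ is finite (if it is infinite the asserted formula reads $\mathrm{cd}(A*B)=\infty$, and Theorem \ref{thm:freehoc} with, say, $\mathrm{Ext}^n_A(\C_\varepsilon,M)\neq 0$ for arbitrarily large $n$ gives $H^n(A*B,\widetilde M)\neq 0$ for those $n$, so $\mathrm{cd}(A*B)=\infty$ too). For $n\geq \max(d,2)+1 = \max(d+1,3)$ and any $A*B$-bimodule $M$, Theorem \ref{thm:freehoc} gives $H^n(A*B,M)\cong H^n(A,M)\oplus H^n(B,M)=0$ since $n>d\geq \mathrm{cd}(A),\mathrm{cd}(B)$. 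This shows $\mathrm{cd}(A*B)\leq \max(d,2)$. To sharpen $\max(d,2)$ down to $\max(d,1)$ I need to handle $n=2$ by hand when $d\leq 1$: if $\mathrm{cd}(A)\leq 1$ and $\mathrm{cd}(B)\leq 1$ then $H^2(A,M)=0=H^2(B,M)$, and the isomorphism of Theorem \ref{thm:freehoc} already applies at $n=2$, so $H^2(A*B,M)=0$; combined with vanishing in all higher degrees this gives $\mathrm{cd}(A*B)\leq 1$. Together with the lower bound $\geq 1$, this is exactly the first case (with $d=0$) and also covers the sub-case $d=1$ of the second case.

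For the lower bound in the second case when $d\geq 1$: pick $i\in\{A,B\}$ achieving $\mathrm{cd}(i)=d$, and choose an $i$-bimodule $N$ with $H^d(i,N)\neq 0$. Here the one genuine subtlety is that Theorem \ref{thm:freehoc} computes $H^d(A*B,M)$ from the restriction of an $A*B$-bimodule $M$, so I must produce an $A*B$-bimodule restricting to $N$ on $i$; the clean way is to induce, or simply to use that the $i$-bimodule structure on $N$ extends along the Hopf algebra retraction $A*B\to i$ (which exists because the free product comes with projections $A*B\to A$ and $A*B\to B$ that are Hopf algebra maps splitting the inclusions) — pulling back $N$ along this retraction yields an $A*B$-bimodule $\widetilde N$ whose restriction to $i$ is $N$. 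Then Theorem \ref{thm:freehoc} in degree $d\geq 2$ gives $H^d(A*B,\widetilde N)\cong H^d(A,\widetilde N)\oplus H^d(B,\widetilde N)\supseteq H^d(i,N)\neq 0$, so $\mathrm{cd}(A*B)\geq d$. If $d=1$ we instead invoke the degree-one analysis already used for the lower bound $\mathrm{cd}(A*B)\geq 1$. Combining the upper and lower bounds yields $\mathrm{cd}(A*B)=\max(d,1)$, which is precisely the claimed formula. The main obstacle, modest as it is, is the bookkeeping at $n=2$ and the degree-$1$ lower bound, since Theorem \ref{thm:freehoc} is only asserted for $n\geq 2$; everything for $n\geq 3$ is a direct consequence of that theorem.
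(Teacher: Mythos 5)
Your argument is correct in outline and, for its main structure, is the same as the paper's: the upper bound is read off from Theorem \ref{thm:freehoc} in degrees $n\geq 2$ (including the boundary case $n=2$, which is exactly what is needed when $\max(\mathrm{cd}(A),\mathrm{cd}(B))\leq 1$), and the case distinction is resolved by the lower bound $\mathrm{cd}(A*B)\geq 1$. Two points deserve comment. First, your justification of that lower bound is the one place with a real flaw: the parenthetical claim that $\mathrm{Ext}^1_{A*B}(\C_\varepsilon,\C_\varepsilon)\neq 0$ is false in general. For $A=B=\C\mathbb Z_2$ one has $A*B=\C[\mathbb Z_2*\mathbb Z_2]$ and $\mathrm{Ext}^1_{A*B}(\C_\varepsilon,\C_\varepsilon)=H^1(\mathbb Z_2*\mathbb Z_2,\C)=\Hom(\mathbb Z_2\times\mathbb Z_2,\C)=0$, even though $\mathrm{cd}(A*B)=1$; likewise the existence of a surjection onto a group algebra quotient is not available for general Hopf algebras. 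The correct (and the paper's) justification is simply that $\mathrm{cd}(A*B)=0$ would force $A*B$ to be a semisimple, hence finite-dimensional, Hopf algebra, whereas $A*B$ is infinite-dimensional as soon as $A$ and $B$ are non-trivial; you should replace your parenthetical by this dimension argument. Second, for the lower bound $\mathrm{cd}(A*B)\geq\max(\mathrm{cd}(A),\mathrm{cd}(B))$ you take a genuinely different route: you extend an $A$-bimodule to an $A*B$-bimodule along the Hopf algebra retraction $A*B\to A$ and then apply Theorem \ref{thm:freehoc} in degree $d\geq 2$ (falling back on the $\geq 1$ bound when $d=1$). The paper instead invokes the general fact that $A*B$ is free as a left $A$-module and as a left $B$-module, which gives $\mathrm{cd}(A*B)\geq\mathrm{cd}(A)$ directly with no case distinction and no degree restriction. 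Both are valid; the paper's is shorter, while yours has the mild virtue of producing an explicit $A*B$-bimodule with non-vanishing top-degree cohomology.
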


\begin{proof}
 If ${\rm cd}(A)=0={\rm cd}(B)$, Theorem \ref{thm:freeext} yields that ${\rm cd}(A*B)\leq 1$. If ${\rm cd}(A*B)=0$, then $A*B$ is a semisimple Hopf algebra, hence is finite-dimensional. But if $A$ and $B$ are non trivial, the free product algebra $A*B$ is necessarily infinite-dimensional, so ${\rm cd}(A*B)=1$.

Since $A*B$ is free both as a left $A$-module and as a left $B$-module, we have ${\rm cd}(A*B)\geq {\rm max}({\rm cd}(A), {\rm cd}(B))$ (see e.g. \cite{bi16}). If $m={\rm max}({\rm cd}(A), {\rm cd}(B)) \geq 1$, we get 
from Theorem \ref{thm:freeext} that $H^{m+1}(A*B,M)=(0)$ for any $A$-bimodule $M$, and hence ${\rm cd}(A*B)\leq m$, as needed.
\end{proof}

\begin{remark}
 The above result also can be obtained as a direct consequence of \cite[Corollary 2.5]{ber}, as well. Indeed, the Hochschild cohomological dimension of a Hopf algebra coincides with its (right or left) global dimension (this is pointed out in \cite{wyz}), hence the result follows from Corollary 2.5 in \cite{ber}, having in mind that since  $A$ and $B$ are non trivial, the free product algebra $A*B$ is necessarily infinite-dimensional, so is not semisimple, and hence ${\rm cd}(A*B)\geq 1$. 

We prefer the proof obtained as a corollary of Theorem \ref{thm:freehoc}, since it gives more information on Hochschild cohomology, and it will be better for adaptation to the Gerstenhaber-Schack cohomology case. 
\end{remark}

Our 
aim now is to prove Theorem \ref{thm:freeext}. We
begin with the following classical shifting lemma.

\begin{lemma}\label{lemm:shift}
 Let $(A,\varepsilon)$ be an augmented algebra. We have, for any $n\geq 2$ and any right $A$-module $M$,
$${\rm Ext}^n_A(\mathbb C_\varepsilon, M) \simeq {\rm Ext}^{n-1}_A( A^+, M)$$
\end{lemma}

\begin{proof}
 One gets the result by applying the ${\rm Ext}$ long exact sequence to the exact sequence of right $A$-modules
$$ 0 \to A^+ \to A \overset{\varepsilon}\to \mathbb C_{\varepsilon} \to 0$$  
with $A$ a free $A$-module, hence projective.
\end{proof}

We thus have to study the augmentation ideal in a free product of augmented algebras.
For this, recall that if $(A, \varepsilon)$ is an augmented algebra and  $M$ is a right $A$-module, a derivation $d : A \rightarrow M$ is a linear map  such that $d(ab)=\varepsilon(a)d(b)+d(a)b$ for any $a,b \in A$. The space  of such derivations is denoted ${\rm Der}(A,M)$.

The following result relates derivations and the augmentation ideal.

\begin{lemma}\label{lemm:deraug}
 Let $A=(A, \varepsilon)$ be an augmented algebra. We have, for any  right $A$-module $M$, a natural isomorphism
$${\rm Der}(A,M)\simeq {\rm Hom}_{A}(A^+,M)$$
\end{lemma}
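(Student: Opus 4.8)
The plan is to exhibit a natural bijection between derivations $d\colon A \to M$ and right $A$-module maps $A^+ \to M$, given in the obvious way by restriction. First I would observe that if $d \in {\rm Der}(A,M)$, then its restriction $d|_{A^+}$ is a right $A$-module map: for $a \in A^+$ and $b \in A$ we have $d(ab) = \varepsilon(a)d(b) + d(a)b = d(a)b$ since $\varepsilon(a)=0$, and $ab \in A^+$ because $A^+$ is a two-sided ideal (it is the kernel of the algebra map $\varepsilon$). So we get a linear map ${\rm Der}(A,M) \to {\rm Hom}_A(A^+,M)$, $d \mapsto d|_{A^+}$.

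Conversely, given $\varphi \in {\rm Hom}_A(A^+,M)$, I would define $\hat\varphi\colon A \to M$ by $\hat\varphi(a) = \varphi(a - \varepsilon(a)1)$, using the direct sum decomposition $A = A^+ \oplus \C 1$ as vector spaces. One then checks $\hat\varphi$ is a derivation: writing $a' = a - \varepsilon(a)1$ and $b' = b - \varepsilon(b)1$, one has $ab - \varepsilon(ab)1 = a'b' + \varepsilon(a)b' + \varepsilon(b)a'$, and applying $\varphi$ (which is $A$-linear on $A^+$, and $a'b', \varepsilon(a)b', \varepsilon(b)a'$ all lie in $A^+$) gives $\hat\varphi(ab) = \varphi(a')b' + \varepsilon(a)\varphi(b') + \varepsilon(b)\varphi(a')$. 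Comparing with $\varepsilon(a)\hat\varphi(b) + \hat\varphi(a)b = \varepsilon(a)\varphi(b') + \varphi(a')b = \varepsilon(a)\varphi(b') + \varphi(a')b' + \varepsilon(b)\varphi(a')$, the two expressions agree, so $\hat\varphi \in {\rm Der}(A,M)$.

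Finally I would note that the two assignments are mutually inverse — restricting $\hat\varphi$ to $A^+$ recovers $\varphi$ since $\varepsilon$ vanishes on $A^+$, and conversely $d \mapsto d|_{A^+} \mapsto \widehat{d|_{A^+}}$ recovers $d$ because a derivation satisfies $d(1) = d(1\cdot 1) = d(1) + d(1)$, hence $d(1) = 0$, so $d(a) = d(a - \varepsilon(a)1)$ — and that both are plainly natural in $M$. There is no real obstacle here; the only point requiring any care is the bookkeeping in verifying the derivation property of $\hat\varphi$, which is a short direct computation using the splitting $A = A^+ \oplus \C 1$ and the fact that $A^+$ is an ideal. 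This, combined with Lemma \ref{lemm:shift}, reduces the computation of ${\rm Ext}^n_A(\C_\varepsilon, M)$ for $n \geq 2$ to understanding ${\rm Der}(A,M)$, which in the free product case will decompose along the two factors.
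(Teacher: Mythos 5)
Your proof is correct and takes the same route as the paper, which simply defines the restriction map $d \mapsto d|_{A^+}$ and asserts it is "easily seen to be an isomorphism"; you have supplied the inverse via the splitting $A = A^+ \oplus \C 1$ and verified the details. Nothing further is needed.
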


\begin{proof}
 It follows from the definition of a derivation that we have a linear map
\begin{align*}
 {\rm Der}(A,M)& \longrightarrow {\rm Hom}_{A}(A^+,M) \\
d &\longmapsto d_{|A^+}
\end{align*}
which is easily seen to be an isomorphism.
\end{proof}

\begin{lemma}\label{lemm:derfree}
 We have, for any augmented algebras $A$, $B$ and for any right $(A*B)$-module $M$, natural isomorphisms
$${\rm Der}(A*B,M)\simeq {\rm Der}(A,M)\oplus {\rm Der}(B,M)$$
\end{lemma}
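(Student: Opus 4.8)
The plan is to exhibit a restriction map and an insertion/collapse map, and show they are mutually inverse. Given a right $(A*B)$-module $M$, a derivation $d: A*B \to M$ restricts to derivations $d|_A : A \to M$ and $d|_B : B \to M$ (the Leibniz identity is inherited, and the augmentation of $A*B$ restricts to that of $A$ and of $B$ since $\varepsilon_{A*B}$ is the unique augmentation extending them). This gives a natural linear map $\rho : {\rm Der}(A*B,M) \to {\rm Der}(A,M) \oplus {\rm Der}(B,M)$, $d \mapsto (d|_A, d|_B)$.

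Next I would construct the inverse. Given a pair $(d_A, d_B)$ with $d_A \in {\rm Der}(A,M)$ and $d_B \in {\rm Der}(B,M)$, I want to produce $d \in {\rm Der}(A*B,M)$ restricting to each. Since $A*B$ is spanned by alternating products $x_1 x_2 \cdots x_k$ with each $x_i$ lying in $A^+$ or $B^+$ (alternating between the two) — recalling $A*B = \C \oplus A^+ \oplus B^+ \oplus (A^+\otimes B^+) \oplus (B^+ \otimes A^+) \oplus \cdots$ as a vector space — one is forced by the Leibniz rule and $d(1)=0$ to define $d$ on such a word by the telescoping formula
$$d(x_1 \cdots x_k) = \sum_{i=1}^{k} \varepsilon(x_1)\cdots \varepsilon(x_{i-1})\, d_{?}(x_i)\, x_{i+1}\cdots x_k,$$
where $d_?$ is $d_A$ or $d_B$ according to which augmentation ideal $x_i$ lies in; since each $x_i \in A^+$ or $B^+$ the factors $\varepsilon(x_j)$ vanish, so in fact $d(x_1\cdots x_k) = d_?(x_1)\, x_2 \cdots x_k$ on reduced words. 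The content is to check this is well-defined on $A*B$ (i.e. compatible with the algebra structure, using that $d_A$, $d_B$ are genuine derivations on the factors so that it is consistent with multiplying two elements of the same factor) and that it satisfies the Leibniz identity $d(ab) = \varepsilon(a)d(b) + d(a)b$ for arbitrary $a, b \in A*B$; this is a routine verification reducing to the reduced-word description and the case where $a$, $b$ are themselves reduced words, splitting according to whether the last letter of $a$ and the first letter of $b$ lie in the same factor.

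Then I would note that $\rho$ followed by this insertion map is the identity on ${\rm Der}(A,M)\oplus{\rm Der}(B,M)$ (immediate, since restricting the telescoping formula to $A$ or $B$ recovers $d_A$ or $d_B$), and conversely that the insertion map followed by $\rho$ is the identity on ${\rm Der}(A*B,M)$ (a derivation is determined by its values on a generating set of the algebra, here $A \cup B$, together with the Leibniz rule, so $d$ is recovered from $(d|_A, d|_B)$). Naturality in $M$ is clear since all maps involved are defined by composition with module maps and restriction along the canonical inclusions. The only genuine obstacle is the well-definedness of the insertion map — keeping track of the tensor-algebra-type description of $A*B$ and verifying the formula respects the relations identifying, say, $(a a') \cdot (\text{rest})$ with $a \cdot (a' \cdot (\text{rest}))$ when $a, a'$ lie in the same factor — but this is exactly where the Leibniz property of $d_A$ and $d_B$ on the individual factors is used, and it goes through without surprises.
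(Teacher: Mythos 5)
Your proposal is correct, but it takes a genuinely different route from the paper. You prove surjectivity of the restriction map by hand, using the normal form $A*B=\C\oplus A^+\oplus B^+\oplus (A^+\otimes B^+)\oplus\cdots$ to define the extended derivation on reduced words by $x_1x_2\cdots x_k\mapsto d_?(x_1)\,x_2\cdots x_k$, and then checking the Leibniz identity by a case analysis on adjacent letters. This works (linearity on each tensor summand gives well-definedness for free, and the Leibniz check is the standard computation familiar from the group-algebra case), but the verification you defer as ``routine'' is precisely the content of the argument, and you should be aware that it is not entirely cost-free to write out. The paper instead avoids the normal form altogether: it encodes a derivation $d:A\to M$ as an algebra map $a\mapsto\left(\begin{smallmatrix} a & d(a)\\ 0 & a\end{smallmatrix}\right)$ into the upper-triangular matrix algebra $\left(\begin{smallmatrix} A*B & M\\ 0 & A*B\end{smallmatrix}\right)$, and then obtains the extension of $(d,d')$ directly from the universal property of the free product of algebras. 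The matrix-algebra trick buys a two-line surjectivity proof with no combinatorics and no Leibniz verification; your approach buys an explicit formula for the extended derivation, at the price of the reduced-word bookkeeping. Injectivity and naturality are handled identically in both arguments.
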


\begin{proof}
We have a  linear map
\begin{align*}
 {\rm Der}(A*B,M)&\longrightarrow {\rm Der}(A,M)\oplus {\rm Der}(B,M) \\
d & \longmapsto (d_{|A}, d_{|B})
\end{align*}
which is clearly injective by the derivation property and the fact that $A*B$ is generated, as an algebra, by $A$ and $B$. To prove surjectivity, first recall that in general, if $M$ is a right $A$-module, a derivation $d : A \rightarrow M$ corresponds precisely to an algebra map
$$
 A \longrightarrow \begin{pmatrix} A & M \\ 0 & A\end{pmatrix}, \ 
a \longmapsto \begin{pmatrix} a & d(a) \\0 & a\end{pmatrix}$$
where $\begin{pmatrix} A & M \\ 0 & A\end{pmatrix}$ is the usual triangular matrix algebra, with $M$ having the left $A$-module structure induced by $\varepsilon$. Now given $(d,d') \in {\rm Der}(A,M)\oplus {\rm Der}(B,M)$, consider the algebra maps
$$
 A \longrightarrow \begin{pmatrix} A*B & M \\ 0 & A*B\end{pmatrix}, \ 
a \longmapsto \begin{pmatrix} a & d(a) \\0 & a\end{pmatrix}, \quad B \longrightarrow \begin{pmatrix} A*B & M \\ 0 & A*B\end{pmatrix}, \ 
b \longmapsto \begin{pmatrix} b & d'(b) \\0 & b\end{pmatrix}
$$
The universal property of the free product yields an algebra map
$$A*B \longrightarrow \begin{pmatrix} A*B & M \\ 0 & A*B\end{pmatrix}$$
which extends the above maps, and hence a derivation $\delta : A*B \rightarrow M$, which clearly satisfies $\delta_{|A}=d$ and $\delta_{|B}= d'$.
\end{proof}

\begin{lemma}\label{lemm:augfree}
Let $A,B$ be augmented algebras. We have
$$(A*B)^+ \simeq \left(A^+\otimes_A (A*B)\right) \oplus \left(B^+\otimes_B (A*B)\right)$$
as right $(A*B)$-modules
\end{lemma}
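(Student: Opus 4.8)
The plan is to identify the augmentation ideal $(A*B)^+$ via the derivation characterization established in Lemmas \ref{lemm:deraug} and \ref{lemm:derfree}, reducing the claim to a statement about representable functors. Concretely, for any right $(A*B)$-module $M$ we have natural isomorphisms
\[
\Hom_{A*B}\big((A*B)^+, M\big) \simeq \Der(A*B, M) \simeq \Der(A,M)\oplus\Der(B,M) \simeq \Hom_A(A^+,M)\oplus \Hom_B(B^+,M),
\]
using Lemma \ref{lemm:deraug} twice, once over $A$ and once over $B$. Now by the extension-of-scalars (tensor--hom) adjunction, $\Hom_A(A^+,M)\simeq \Hom_{A*B}\big(A^+\otimes_A(A*B), M\big)$ and similarly for $B$, so the right-hand side becomes $\Hom_{A*B}\big((A^+\otimes_A(A*B))\oplus(B^+\otimes_B(A*B)), M\big)$. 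Thus the two $(A*B)$-modules $(A*B)^+$ and $(A^+\otimes_A(A*B))\oplus(B^+\otimes_B(A*B))$ corepresent the same functor on $\mathcal M_{A*B}$, and the Yoneda lemma gives the desired isomorphism of right $(A*B)$-modules.

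The key steps, in order, are: (1) assemble the chain of natural isomorphisms above, being careful that the restriction maps $d\mapsto d_{|A}$, $d\mapsto d_{|B}$ from Lemma \ref{lemm:derfree} are compatible with the module identifications so that everything is natural in $M$; (2) invoke the tensor--hom adjunction for the inclusions $A\hookrightarrow A*B$ and $B\hookrightarrow A*B$ to rewrite $\Hom_A(A^+,-)$ and $\Hom_B(B^+,-)$ as $\Hom_{A*B}$ of induced modules; (3) apply Yoneda to conclude. One should note that naturality in $M$ is exactly what makes Yoneda applicable, so a brief check that each isomorphism in the chain is a natural transformation of functors $\mathcal M_{A*B}\to \mathbf{Vect}$ is the substance of the argument.

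Alternatively, and perhaps more transparently, one can write down the isomorphism explicitly: the inclusions $A^+\hookrightarrow (A*B)^+$ and $B^+\hookrightarrow (A*B)^+$ induce $(A*B)$-module maps $A^+\otimes_A(A*B)\to (A*B)^+$ and $B^+\otimes_B(A*B)\to (A*B)^+$, hence a map $\Phi$ from the direct sum. To see $\Phi$ is an isomorphism one uses that $A*B$ has a basis of alternating words in $A^+$ and $B^+$ (the standard normal form for free products of augmented algebras): every element of $(A*B)^+$ is a combination of words $w$ that begin with a letter from $A^+$ or from $B^+$, and peeling off that first letter exhibits $w$ uniquely in the image of one of the two summands. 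This makes $\Phi$ visibly bijective on the word basis.

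The main obstacle I anticipate is purely bookkeeping: verifying naturality of the derivation-to-hom identifications simultaneously over $A$, over $B$, and over $A*B$, and checking that the adjunction isomorphisms intertwine the restriction maps correctly, so that the composite really is induced by the inclusions of augmentation ideals. In the explicit approach, the corresponding obstacle is justifying the normal-form basis for $A*B$ (or citing it) and confirming that the decomposition of a reduced word according to its first letter is well-defined and respects the right $(A*B)$-action — this is where one must be slightly careful, since right multiplication can shorten or lengthen words. Neither obstacle is deep, but both require attention to avoid a circular or merely formal argument.
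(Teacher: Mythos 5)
Your primary argument is exactly the paper's proof: the chain of natural isomorphisms $\Hom_{A*B}((A*B)^+,M)\simeq \Der(A*B,M)\simeq \Der(A,M)\oplus\Der(B,M)\simeq \Hom_A(A^+,M)\oplus\Hom_B(B^+,M)$, followed by the tensor--hom adjunction and the Yoneda lemma, is word for word the route taken in the paper. The explicit normal-form alternative you sketch is a reasonable second option but is not needed; the functorial argument is complete and correct as stated.
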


\begin{proof}
 We have, for any right $(A*B)$-module $M$, using lemmas \ref{lemm:deraug} and \ref{lemm:derfree},
\begin{align*}
 {\rm Hom}_{A*B}((A*B)^+,M) & \simeq {\rm Der}(A*B,M) \\
&  \simeq {\rm Der}(A,M)\oplus {\rm Der}(B,M) \\
& \simeq {\rm Hom}_{A}(A^+,M) \oplus {\rm Hom}_{B}(B^+,M) \\
& \simeq {\rm Hom}_{A*B}\left(A^+\otimes_A (A*B),M\right) \oplus {\rm Hom}_{A*B}\left(B^+\otimes_B (A*B),M\right) \\
& \simeq {\rm Hom}_{A*B}\left((A^+\otimes_A (A*B))\oplus (B^+\otimes_B (A*B)),M\right) 
\end{align*}
We conclude by the Yoneda Lemma.
\end{proof}

\begin{proof}[Proof of Theorem \ref{thm:freeext}]
 Let $n\geq 2$ and let $M$ be a right $A*B$-module. We have, using lemmas \ref{lemm:shift} and \ref{lemm:augfree},
\begin{align*}
{\rm Ext }_{A*B}^n(\mathbb C_\varepsilon, M) & \simeq {\rm Ext}^{n-1}_{A*B}( (A*B)^+, M) \\
& \simeq {\rm Ext}^{n-1}_{A*B}\left((A^+\otimes_A A*B) \oplus (B^+\otimes_B A*B), M\right) \\
& \simeq  {\rm Ext}^{n-1}_{A*B}(A^+\otimes_A (A*B), M) \oplus {\rm Ext}^{n-1}_{A*B}(B^+\otimes_B (A*B), M) \\
\end{align*}
Now since $A*B$ is flat as an $A$-module (it is even free as an $A$-module), \cite[Proposition 12.2, IV]{hs}
gives
$${\rm Ext}^{n-1}_{A*B}(A^+\otimes_A (A*B), M) \simeq {\rm Ext}^{n-1}_{A}(A^+, M)$$
and similarly for $B$. We thus have, using again Lemma \ref{lemm:shift},
\begin{align*}
{\rm Ext }_{A*B}^n(\mathbb C_\varepsilon, M) 
 & \simeq  {\rm Ext}^{n-1}_{A}(A^+, M) \oplus {\rm Ext}^{n-1}_B(B^+, M) \\
& \simeq {\rm Ext}^{n}_{A}(\mathbb C_\varepsilon, M) \oplus {\rm Ext}^{n}_B(\mathbb C_{\varepsilon}, M)
\end{align*}
which is the expected result.
\end{proof}

\subsection{Gerstenhaber-Schack cohomology}
We can now formulate the Gerstenhaber-Schack cohomology analogue of Theorem 5.2, using the restriction functor from Subsection \ref{subsec:res}.

\begin{theorem}\label{thm:freegs}
 Let $A$, $B$ be cosemisimple Hopf algebras algebras. We have, for any  Yetter-Drinfeld module $M$ over  $A*B$, and for any $n \geq 2$, a natural isomorphism
$$H^n_{\rm GS}(A*B, M) \simeq H^n_{\rm GS}(A, M^{(A)}) \oplus H^n_{\rm GS}(B, M^{(B)})$$ 
\end{theorem}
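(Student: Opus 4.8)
The plan is to mimic the proof of Theorem \ref{thm:freeext} step by step, replacing the category of modules with the category of Yetter-Drinfeld modules and the trivial module $\C_\varepsilon$ with the trivial Yetter-Drinfeld module $\C$, using Taillefer's identification $H_{\rm GS}^*(A*B,M) \simeq \ext^*_{\yd_{A*B}^{A*B}}(\C,M)$ together with the restriction and induction machinery of Subsection \ref{subsec:res}. First I would record that $A \subset A*B$ and $B \subset A*B$ are Hopf subalgebras, that $A*B$ is free (hence flat) as a left $A$-module and as a left $B$-module, and that, since $A$, $B$ and $A*B$ are cosemisimple, all the coflatness hypotheses needed to apply Proposition \ref{prop:adjyd} are satisfied; in particular the restriction functor $(-)^{(A)} : \yd_{A*B}^{A*B} \to \yd_A^A$ is exact, preserves injectives, and satisfies $\ext^n_{\yd_{A*B}^{A*B}}(V\otimes_A (A*B), X) \simeq \ext^n_{\yd_A^A}(V, X^{(A)})$, and similarly for $B$.

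Next I would establish the Yetter-Drinfeld analogue of Lemma \ref{lemm:augfree}: a decomposition of the "augmentation object" of $\yd_{A*B}^{A*B}$. Concretely, one has the short exact sequence $0 \to \C \boxtimes (A*B)^+ \to (A*B)_{\rm coad} \to \C \to 0$ (or more simply one works with the kernel $\overline{A*B}$ of the counit on the coadjoint Yetter-Drinfeld module), and the claim to prove is an isomorphism of Yetter-Drinfeld modules over $A*B$
$$\overline{A*B} \simeq \bigl(\overline{A}\otimes_A (A*B)\bigr) \oplus \bigl(\overline{B}\otimes_B (A*B)\bigr),$$
where $\overline{A}=A^+$ carries the restricted coadjoint Yetter-Drinfeld structure and $\overline{A}\otimes_A(A*B)$ is the induced module of Proposition \ref{prop:indyd}. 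To get this I would run the same Yoneda-style computation as in Lemma \ref{lemm:augfree}: for any $X \in \yd_{A*B}^{A*B}$, use $\Hom_{\yd_{A*B}^{A*B}}(\overline{A*B},X) \simeq \mathrm{Der}^{\yd}(A*B,X)$ (an appropriate notion of Yetter-Drinfeld derivation, or equivalently a degree-one Gerstenhaber-Schack cocycle computation), then show such derivations split as a direct sum of the corresponding objects for $A$ and for $B$ using the universal property of the free product and the fact that the relevant colinearity conditions on $A$ and on $B$ glue (here the compatibility of the $A*B$-comodule structure with the subalgebras $A$, $B$, via the grading-free coproduct on the free product, is what makes the gluing work), and finally reinterpret $\mathrm{Der}^{\yd}(A,X)$ as $\Hom_{\yd_A^A}(\overline{A},X^{(A)}) \simeq \Hom_{\yd_{A*B}^{A*B}}(\overline{A}\otimes_A(A*B),X)$ via adjunction. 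The Yoneda lemma then yields the displayed isomorphism.

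Finally I would assemble the dimension shift: from $0 \to \overline{A*B} \to (A*B)_{\rm coad} \to \C \to 0$ and the fact that $(A*B)_{\rm coad} = \C \boxtimes (A*B)$ is free, hence relative projective and $\ext$-acyclic in degrees $\geq 1$, the long exact sequence gives $\ext^n_{\yd_{A*B}^{A*B}}(\C,M) \simeq \ext^{n-1}_{\yd_{A*B}^{A*B}}(\overline{A*B},M)$ for $n\geq 2$; then plug in the decomposition of $\overline{A*B}$, use additivity of $\ext$, apply the $\ext$-transfer isomorphism of Proposition \ref{prop:adjyd} to turn $\ext^{n-1}_{\yd_{A*B}^{A*B}}(\overline{A}\otimes_A(A*B),M)$ into $\ext^{n-1}_{\yd_A^A}(\overline{A},M^{(A)})$, and undo the shift on the $A$-side (again $A_{\rm coad}=\C\boxtimes A$ is free) to reach $\ext^{n}_{\yd_A^A}(\C,M^{(A)}) = H^n_{\rm GS}(A,M^{(A)})$, and symmetrically for $B$. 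I expect the main obstacle to be the middle step: pinning down the correct notion of a Yetter-Drinfeld derivation into $X$ and verifying that $\Hom_{\yd_{A*B}^{A*B}}(\overline{A*B},X)$ genuinely splits — i.e. that an $A*B$-colinear, $A*B$-linear map out of the augmentation object is determined by and freely recoverable from its restrictions to $\overline{A}$ and $\overline{B}$. This requires carefully tracking how the coadjoint coaction on $A*B$ restricts along the inclusions $A,B \hookrightarrow A*B$ (the point being $\alpha_{(A*B)_{\rm coad}}$ restricted to $A$ lands in $A\otimes A$, so that $\overline{A}$ is indeed the restricted object $((A*B)_{\rm coad})^{(A)}$ up to the counit piece), and checking that the free-product gluing respects colinearity, which is exactly where cosemisimplicity and the freeness of $A*B$ over its factors are used.
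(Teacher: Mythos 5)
Your proposal follows essentially the same route as the paper's proof: the shift via $0 \to (A*B)^+ \to (A*B)_{\rm coad} \to \C \to 0$ (projectivity of the coadjoint module coming from cosemisimplicity), the identification of $\Hom_{\yd}((A*B)^+,-)$ with comodule-compatible derivations, the splitting of such derivations over the free product via the triangular-matrix trick and a colinearity check, the Yoneda-lemma decomposition of $(A*B)^+$ into induced modules, and the $\ext$-transfer of Proposition \ref{prop:adjyd}. The argument is correct as outlined and matches the paper step for step, including your correct identification of the derivation-gluing colinearity as the only point needing care.
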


\begin{corollary}\label{cor:freecdgs}
 Let $A$, $B$ be non trivial cosemisimple Hopf algebras. We have
$${\rm cd}_{\rm GS}(A*B)= \begin{cases}
                  1 & \text{if ${\rm cd}_{\rm GS}(A)=0={\rm cd}_{\rm GS}(B)$} \\
{\rm max}({\rm cd}_{\rm GS}(A), {\rm cd}_{\rm GS}(B)) & \text{if ${\rm max}({\rm cd}_{\rm GS}(A), {\rm cd}_{\rm GS}(B)) \geq 1$} 
                 \end{cases}$$
\end{corollary}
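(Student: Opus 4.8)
The plan is to deduce Corollary~\ref{cor:freecdgs} from Theorem~\ref{thm:freegs} by an argument closely parallel to the proof of Corollary~\ref{cor:freecd}, using the fact established in Subsection~\ref{sec:ydgs} that for a cosemisimple (hence co-Frobenius) Hopf algebra $C$ one has ${\rm cd}_{\rm GS}(C)= {\rm pd}_{\yd_C^C}(\C)$, so that $H^{m+1}_{\rm GS}(C,N)=0$ for all $N \in \yd_C^C$ precisely when ${\rm cd}_{\rm GS}(C)\leq m$. Note first that $A*B$ is again cosemisimple (a free product of cosemisimple Hopf algebras is cosemisimple, cf.\ \cite{wa95}), so ${\rm cd}_{\rm GS}(A*B)$ is computed by the projective dimension of $\C$ in $\yd_{A*B}^{A*B}$ and the Ext-description applies.

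First I would treat the case $m={\rm max}({\rm cd}_{\rm GS}(A),{\rm cd}_{\rm GS}(B))\geq 1$. For the upper bound: given any $M \in \yd_{A*B}^{A*B}$ and any $n\geq m+1\geq 2$, Theorem~\ref{thm:freegs} gives $H^n_{\rm GS}(A*B,M)\simeq H^n_{\rm GS}(A,M^{(A)})\oplus H^n_{\rm GS}(B,M^{(B)})$, and both summands vanish since $n>m\geq {\rm cd}_{\rm GS}(A),{\rm cd}_{\rm GS}(B)$; hence ${\rm cd}_{\rm GS}(A*B)\leq m$. For the lower bound I would use the inclusion $A \subset A*B$ (a Hopf subalgebra) together with the adjunction of Proposition~\ref{prop:adjyd}: since $A*B$ is cosemisimple, $A\subset A*B$ is coflat, and $A*B$ is free, hence flat, as a left $A$-module, so for $V\in\yd_A^A$ and $X\in\yd_{A*B}^{A*B}$ we have ${\rm Ext}^n_{\yd_{A*B}^{A*B}}(V\otimes_A (A*B),X)\simeq {\rm Ext}^n_{\yd_A^A}(V,X^{(A)})$. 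Choosing $n={\rm cd}_{\rm GS}(A)$ and $V,X$ witnessing a nonzero Ext group over $A$ (with $X=Y^{(A)}$ for a suitable $Y\in\yd_{A*B}^{A*B}$, using that restriction is essentially surjective onto the relevant objects, or more simply invoking ${\rm cd}_{\rm GS}(A*B)\geq{\rm cd}_{\rm GS}(A)$ directly from Theorem~\ref{thm:freegs} applied to $M$ induced up from $A$) shows ${\rm cd}_{\rm GS}(A*B)\geq {\rm cd}_{\rm GS}(A)$, and symmetrically $\geq{\rm cd}_{\rm GS}(B)$, whence ${\rm cd}_{\rm GS}(A*B)=m$.

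Next the case ${\rm cd}_{\rm GS}(A)=0={\rm cd}_{\rm GS}(B)$. Theorem~\ref{thm:freegs} then shows $H^n_{\rm GS}(A*B,M)=0$ for all $n\geq 2$ and all $M$, so ${\rm cd}_{\rm GS}(A*B)\leq 1$. To rule out ${\rm cd}_{\rm GS}(A*B)=0$: by \cite{bi16}, ${\rm cd}_{\rm GS}(C)=0$ forces ${\rm cd}(C)=0$ (since ${\rm cd}(C)\leq{\rm cd}_{\rm GS}(C)$), hence $C$ semisimple, hence finite-dimensional; but $A*B$ is infinite-dimensional whenever $A$ and $B$ are non trivial, so ${\rm cd}_{\rm GS}(A*B)\geq 1$, and therefore equals $1$. (Alternatively one can use the Kac-type result, but the Hochschild comparison suffices.)

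The main obstacle I anticipate is the lower-bound direction ${\rm cd}_{\rm GS}(A*B)\geq {\rm cd}_{\rm GS}(A)$: one must produce a Yetter-Drinfeld module $M$ over $A*B$ whose restriction $M^{(A)}$ realizes the top nonvanishing Gerstenhaber-Schack cohomology of $A$. The cleanest route is the induction functor $\yd_A^A\to\yd_{A*B}^{A*B}$, $V\mapsto V\otimes_A(A*B)$, combined with the Ext-isomorphism of Proposition~\ref{prop:adjyd} (whose hypotheses — $A\subset A*B$ coflat and $A*B$ left $A$-flat — hold here), which immediately yields the inequality without any delicate bookkeeping; one just has to check that if ${\rm Ext}^n_{\yd_A^A}(\C, W)\neq 0$ for some $W$, then writing $W$ as $X^{(A)}$ for a suitable $X$ (or applying the isomorphism with $V=\C$ and noting $\C\otimes_A(A*B)$ is nonzero with $(\,\cdot\,)^{(A)}$ recovering enough) forces ${\rm Ext}^n_{\yd_{A*B}^{A*B}}(\C, X)\neq 0$. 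The rest of the proof is a routine transcription of the Hochschild argument.
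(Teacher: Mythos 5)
Your treatment of the upper bound and of the case ${\rm cd}_{\rm GS}(A)=0={\rm cd}_{\rm GS}(B)$ is correct and is exactly the intended transcription of the proof of Corollary \ref{cor:freecd}: Theorem \ref{thm:freegs} kills $H^n_{\rm GS}(A*B,-)$ for $n>\max({\rm cd}_{\rm GS}(A),{\rm cd}_{\rm GS}(B))$ as soon as $n\geq 2$, and ${\rm cd}_{\rm GS}(A*B)=0$ is excluded because it would force ${\rm cd}(A*B)=0$, hence $A*B$ semisimple and finite-dimensional, which fails for a free product of non-trivial Hopf algebras. (Note also that the case $\max=1$ is already covered by this argument, so the only issue below concerns $\max\geq 2$.)

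The gap is in the lower bound ${\rm cd}_{\rm GS}(A*B)\geq\max({\rm cd}_{\rm GS}(A),{\rm cd}_{\rm GS}(B))$, precisely the point you flag as the ``main obstacle'', and neither of your two routes closes it. Applying Proposition \ref{prop:adjyd} with $V=\C$ gives ${\rm Ext}^n_{\yd_{A*B}^{A*B}}(\C\otimes_A(A*B),X)\simeq{\rm Ext}^n_{\yd_A^A}(\C,X^{(A)})$, but $\C\otimes_A(A*B)=(A*B)/A^+(A*B)$ is not the trivial Yetter-Drinfeld module, so non-vanishing of the left-hand side does not by itself witness ${\rm cd}_{\rm GS}(A*B)\geq n$; one needs in addition ${\rm pd}_{\yd_{A*B}^{A*B}}\bigl(\C\otimes_A(A*B)\bigr)\leq{\rm pd}_{\yd_{A*B}^{A*B}}(\C)$, which is true (tensor a projective resolution of $\C$ by the object, using that $M\otimes(V\boxtimes A)$ is free) but must be said. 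More seriously, both routes require realizing a $W\in\yd_A^A$ with ${\rm Ext}^n_{\yd_A^A}(\C,W)\neq 0$ as $M^{(A)}$, or at least as a Yetter-Drinfeld direct summand of some $M^{(A)}$: the restriction functor is not essentially surjective, and for the natural candidate $M=W\otimes_A(A*B)$ the unit $W\to (W\otimes_A(A*B))^{(A)}$ is injective but has no evident Yetter-Drinfeld splitting (the complement furnished by the reduced-word decomposition of $A*B$ as a free left $A$-module is a submodule but not a subcomodule). The paper's one-line proof instead mirrors the Hochschild case, where the lower bound comes from the freeness of $A*B$ over $A$ and over $B$; the Gerstenhaber-Schack analogue is the monotonicity of ${\rm cd}_{\rm GS}$ under passage to a Hopf subalgebra of a cosemisimple Hopf algebra, available from \cite{bi16}, whose proof rests on showing that projective Yetter-Drinfeld modules over $A*B$ restrict to relative projective Yetter-Drinfeld modules over $A$, so that a length-$m$ projective resolution of $\C$ over $A*B$ yields one over $A$. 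You should invoke that result (or prove the relative projectivity statement); as written, the inequality $\geq$ is asserted rather than established.
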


\begin{proof}
 This is similar to the proof of Corollary \ref{cor:freecd}.
\end{proof}

The scheme of the   proof of Theorem \ref{thm:freegs} will be similar to that of Theorem \ref{thm:freeext}. First we have a shifting lemma. 

\begin{lemma}\label{lemm:shiftgs}
 Let $A$ be a cosemisimple Hopf algebra algebra. We have, for any $n\geq 2$ and any Yetter-Drinfeld module $M$ over  $A$,
$${\rm Ext}^n_{\yd_A^A}(\mathbb C, M) \simeq {\rm Ext}^{n-1}_{\yd_A^A}( A^+, M)$$
\end{lemma}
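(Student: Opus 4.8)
\textbf{Plan for the proof of Lemma \ref{lemm:shiftgs}.}
The plan is to mimic exactly the proof of the classical shifting lemma \ref{lemm:shift}, but carried out inside the abelian category $\yd_A^A$ instead of $\mathcal M_A$. The key point is that the short exact sequence
$$0 \to A^+ \to A \overset{\varepsilon}\to \mathbb C \to 0$$
of right $A$-modules is in fact a short exact sequence in $\yd_A^A$, once $A$ is equipped with the \emph{right adjoint} Yetter-Drinfeld structure: the coaction $a \mapsto a_{(2)} \otimes S(a_{(1)})a_{(3)}$ together with right multiplication. Indeed, $\varepsilon : A \to \mathbb C$ is then a morphism of Yetter-Drinfeld modules (it is visibly module and comodule colinear for this coaction), so $A^+ = \Ker(\varepsilon)$ inherits a Yetter-Drinfeld submodule structure, and the sequence is exact in $\yd_A^A$.

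The second ingredient is that $A$, with this adjoint Yetter-Drinfeld structure, is a \emph{relative projective} (indeed free) object of $\yd_A^A$: it is precisely the free Yetter-Drinfeld module $A_{\mathrm{coad}} = \mathbb C \boxtimes A$ described in Section \ref{sec:ydgs}. Since $A$ is cosemisimple, the relative projective objects of $\yd_A^A$ coincide with the projective objects (by \cite[Proposition 4.2]{bi16}), so $A$ is projective in $\yd_A^A$, and the category has enough projectives. Hence the $\Ext$ long exact sequence in $\yd_A^A$ associated to the short exact sequence above, applied to $\Hom_{\yd_A^A}(-, M)$, degenerates: for $n \geq 2$ the terms $\Ext^{n-1}_{\yd_A^A}(A, M)$ and $\Ext^{n}_{\yd_A^A}(A, M)$ vanish, giving the connecting isomorphism
$$\Ext^{n-1}_{\yd_A^A}(A^+, M) \overset{\sim}\to \Ext^{n}_{\yd_A^A}(\mathbb C, M),$$
which is the claim (recalling $H^*_{\rm GS}(A, M) = \Ext^*_{\yd_A^A}(\mathbb C, M)$).

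I do not anticipate a serious obstacle here; the only point requiring a moment's care is to confirm that $A$ with the adjoint coaction really is the free Yetter-Drinfeld module $\mathbb C \boxtimes A$ (so that projectivity in $\yd_A^A$, not merely in $\mathcal M_A$, is available) and that $\varepsilon$ is colinear for it — both are immediate from the formulas in Section \ref{sec:ydgs}, since $\alpha_{\mathbb C \boxtimes A}(1 \otimes a) = 1 \otimes a_{(2)} \otimes S(a_{(1)})\,1\,a_{(3)} = 1 \otimes a_{(2)} \otimes S(a_{(1)})a_{(3)}$. Thus the proof is: note the short exact sequence lives in $\yd_A^A$, note $A$ is projective there, and invoke the $\Ext$ long exact sequence exactly as in Lemma \ref{lemm:shift}.
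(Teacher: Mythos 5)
Your proof is correct and is essentially identical to the paper's: the paper also applies the $\Ext$ long exact sequence in $\yd_A^A$ to the sequence $0 \to A^+ \to A_{\rm coad} \overset{\varepsilon}\to \mathbb C \to 0$, using that $A_{\rm coad}=\mathbb C\boxtimes A$ is a free, hence (by cosemisimplicity) projective, Yetter-Drinfeld module. Your extra verification that the adjoint coaction on $A$ coincides with that of $\mathbb C\boxtimes A$ is exactly the point the paper leaves implicit.
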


\begin{proof}
Similarly to Lemma \ref{lemm:shift}, the result is obtained by applying the ${\rm Ext}$ long exact sequence to the exact sequence of Yetter-Drinfeld modules
$$ 0 \to A^+ \to A_{\rm coad} \overset{\varepsilon}\to \mathbb C \to 0$$  
with $A_{\rm coad}$ a free Yetter-Drinfeld module, hence projective since $A$ is cosemisimple.
\end{proof}

Given a Yetter-Drinfeld module $M$ over $A$, we note by ${\rm Der}_{\yd}(A,M)$ the derivations $d\in {\rm Der}(A,M)$ such that $d : A_{\rm coad} \rightarrow M$ is a morphism of $A$-comodules. With this notation, we have the following analogue of Lemma \ref{lemm:deraug}, whose proof is immediate.

\begin{lemma}\label{lemm:deraugyd}
 Let $A$ be a Hopf algebra. We have, for any  $M \in \yd_A^A$, a natural isomorphism
$${\rm Der}_{\yd}(A,M)\simeq {\rm Hom}_{\yd_A^A}(A^+,M)$$
\end{lemma}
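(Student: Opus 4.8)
The statement to prove is Lemma~\ref{lemm:deraugyd}: for a Hopf algebra $A$ and $M \in \yd_A^A$, there is a natural isomorphism ${\rm Der}_{\yd}(A,M)\simeq {\rm Hom}_{\yd_A^A}(A^+,M)$.

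The plan is to upgrade the isomorphism ${\rm Der}(A,M)\simeq {\rm Hom}_A(A^+,M)$ of Lemma~\ref{lemm:deraug} to the Yetter-Drinfeld setting by showing it matches up the two relevant colinearity conditions. Recall that in Lemma~\ref{lemm:deraug} the map sends a derivation $d$ to its restriction $d_{|A^+}$, with inverse extending an $A$-linear map $\varphi : A^+ \to M$ to $A$ via $d(a) = \varphi(a - \varepsilon(a)1)$; the $A$-linearity of $\varphi$ matches exactly the derivation property of $d$. So the underlying linear isomorphism is already in hand, and the only thing to check is that $d$ is $A$-colinear as a map $A_{\rm coad} \to M$ if and only if $\varphi = d_{|A^+}$ is $A$-colinear as a map of $B$-comodules, where $A^+$ carries the comodule structure restricted from $A_{\rm coad} = \C \boxtimes A$.

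First I would recall explicitly the coaction on $A_{\rm coad} = \C \boxtimes A$: by the definition of $V \boxtimes A$ with $V = \C$ the trivial comodule, $\alpha_{A_{\rm coad}}(a) = a_{(2)} \otimes S(a_{(1)})a_{(3)}$. Since $A^+ = \ker\varepsilon$ is a subcomodule of $A_{\rm coad}$ (one checks $\varepsilon$ is colinear, $A_{\rm coad}$-coaction composed with $\varepsilon\otimes\id$ gives $S(a_{(1)})a_{(2)} = \varepsilon(a)1$, so $A^+$ is the kernel of a colinear map hence a subcomodule), the restriction $d_{|A^+}$ lands in the correct comodule. Then I would write out what ``$d : A_{\rm coad} \to M$ is colinear'' means: $d(a)_{(0)} \otimes d(a)_{(1)} = d(a_{(2)}) \otimes S(a_{(1)})a_{(3)}$ for all $a \in A$, and observe this is equivalent to the same identity for all $a \in A^+$ (the grouplike/counit part $a = 1$ being automatic since $d(1) = 0$ by the derivation property and both sides vanish). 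This is precisely the colinearity of $\varphi = d_{|A^+}$ for the restricted coaction. Conversely, given colinear $\varphi$, the extension $d(a) = \varphi(a - \varepsilon(a)1)$ inherits colinearity by the same computation, using that $\varepsilon(a)1 \in A$ maps to $0$ under $d$ and contributes consistently on the coaction side.

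There is no serious obstacle here; the lemma is routine, which is why the paper states its proof is immediate. The one point requiring a line of care is verifying that $A^+$ is genuinely a subcomodule of $A_{\rm coad}$ and that the coaction formula there restricts correctly, so that the phrase ``$d : A_{\rm coad} \to M$ is a morphism of $A$-comodules'' is meaningful and matches colinearity of $d_{|A^+}$; once that bookkeeping is done, naturality in $M$ is clear from the naturality in Lemma~\ref{lemm:deraug}. So the proof amounts to: take the linear isomorphism of Lemma~\ref{lemm:deraug}, and check it is compatible with the comodule structures by the short computation above.

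\begin{proof}
Recall from Lemma~\ref{lemm:deraug} that restriction $d \mapsto d_{|A^+}$ gives a linear isomorphism ${\rm Der}(A,M) \simeq {\rm Hom}_A(A^+,M)$, the inverse sending $\varphi$ to the derivation $d_\varphi(a) = \varphi(a - \varepsilon(a)1)$. The coaction on $A_{\rm coad} = \C \boxtimes A$ is $a \mapsto a_{(2)} \otimes S(a_{(1)})a_{(3)}$, and $A^+ = \Ker(\varepsilon)$ is a subcomodule, since $\varepsilon : A_{\rm coad} \to \C$ is $A$-colinear (indeed $\varepsilon(a_{(2)})S(a_{(1)})a_{(3)} = S(a_{(1)})a_{(2)} = \varepsilon(a)1$). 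Now a derivation $d$ is a morphism of comodules $A_{\rm coad} \to M$ precisely when
$$d(a)_{(0)} \otimes d(a)_{(1)} = d(a_{(2)}) \otimes S(a_{(1)})a_{(3)}, \qquad \forall a \in A.$$
For $a = 1$ both sides vanish (as $d(1) = 0$), so this identity for all $a \in A$ is equivalent to the same identity for all $a \in A^+$, which is exactly the statement that $d_{|A^+} : A^+ \to M$ is $A$-colinear. Conversely, if $\varphi : A^+ \to M$ is $A$-colinear, then writing $d = d_\varphi$ and using $d(a) = \varphi(a - \varepsilon(a)1)$ together with $\Delta(a - \varepsilon(a)1) \in A^+ \otimes A + 1 \otimes A^+$ and $d(1) = 0$, one checks the displayed colinearity identity holds. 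Hence the isomorphism of Lemma~\ref{lemm:deraug} restricts to an isomorphism ${\rm Der}_{\yd}(A,M) \simeq {\rm Hom}_{\yd_A^A}(A^+,M)$, and naturality in $M$ is inherited from Lemma~\ref{lemm:deraug}.
\end{proof}
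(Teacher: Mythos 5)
Your proof is correct and matches the paper's intent: the paper declares this lemma's proof immediate, and you have simply written out the routine verification that the linear isomorphism of Lemma \ref{lemm:deraug} exchanges the colinearity of $d : A_{\rm coad} \to M$ with that of $d_{|A^+}$, using $d(1)=0$ and the fact that $A^+$ is a subcomodule of $A_{\rm coad}$. No gaps.
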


\begin{lemma}\label{lemm:derfreeyd}
 We have, for any Hopf algebras $A$, $B$ and for any  $M \in \yd_{A*B}^{A*B}$, natural isomorphisms
$${\rm Der}_{\yd}(A*B,M)\simeq {\rm Der}_{\yd}(A,M^{(A)})\oplus {\rm Der}_{\yd}(B,M^{(B)})$$
\end{lemma}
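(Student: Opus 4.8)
The plan is to mimic the proof of Lemma \ref{lemm:derfree} (the plain-algebra version), but carrying the Yetter-Drinfeld colinearity condition along. First recall from Lemma \ref{lemm:derfree} that restriction gives an isomorphism of ordinary derivation spaces ${\rm Der}(A*B,M)\simeq {\rm Der}(A,M)\oplus{\rm Der}(B,M)$, $d\mapsto(d_{|A},d_{|B})$, and that $M$ here is regarded as a right $A$-module (resp.\ $B$-module) by restriction. The map $d\mapsto(d_{|A},d_{|B})$ clearly restricts to a map on the subspaces of $\yd$-derivations, so the first task is to identify its target. I would argue that for $d\in{\rm Der}_{\yd}(A*B,M)$, the restriction $d_{|A}$ automatically lands in $M^{(A)}$: indeed for $a\in A$, since $a$ is grouplike-part-free only in the sense that $a_{(1)}\otimes a_{(2)}\in A\otimes A$, colinearity of $d\colon (A*B)_{\rm coad}\to M$ gives $d(a)_{(0)}\otimes d(a)_{(1)} = d(a_{(2)})\otimes S(a_{(1)})\,(\text{unit coaction})\,a_{(3)}$ — more precisely one uses that the coaction on $(A*B)_{\rm coad}=\C\boxtimes(A*B)$ sends $a\mapsto a_{(2)}\otimes S(a_{(1)})a_{(3)}\in A*B\otimes A$ for $a\in A$, so $d(a)_{(1)}\in A$, i.e.\ $d(a)\in M^{(A)}$. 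Moreover $d_{|A}$ is then $A$-colinear as a map $A_{\rm coad}\to M^{(A)}$ and is a derivation, so $d_{|A}\in{\rm Der}_{\yd}(A,M^{(A)})$; symmetrically $d_{|B}\in{\rm Der}_{\yd}(B,M^{(B)})$. Injectivity of the resulting map ${\rm Der}_{\yd}(A*B,M)\to{\rm Der}_{\yd}(A,M^{(A)})\oplus{\rm Der}_{\yd}(B,M^{(B)})$ is inherited from Lemma \ref{lemm:derfree}.

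For surjectivity, given $(d,d')\in{\rm Der}_{\yd}(A,M^{(A)})\oplus{\rm Der}_{\yd}(B,M^{(B)})$, Lemma \ref{lemm:derfree} already produces a unique $\delta\in{\rm Der}(A*B,M)$ with $\delta_{|A}=d$, $\delta_{|B}=d'$ (via the triangular-matrix-algebra trick and the universal property of the free product). The only remaining point is that this $\delta$ is a $\yd$-derivation, i.e.\ $A*B$-colinear as a map $(A*B)_{\rm coad}\to M$. Here I would use that $\delta$ is a derivation together with colinearity on the generating subalgebras $A$ and $B$: the coaction on $(A*B)_{\rm coad}$ is an algebra map from $A*B$ into the smash-type object $(A*B)\otimes(A*B)$ (more precisely, $\alpha\colon x\mapsto x_{(2)}\otimes S(x_{(1)})x_{(3)}$ is multiplicative because $\Delta$ and $S$ combine suitably), and likewise $x\mapsto \delta(x)_{(0)}\otimes\delta(x)_{(1)}$ and $x\mapsto \delta(x_{(2)})\otimes S(x_{(1)})x_{(3)}$ both satisfy the same ``twisted derivation'' identity relative to $\alpha$; since they agree on $A$ and on $B$ and both are determined by such an identity on an algebra generated by $A\cup B$, they agree on all of $A*B$. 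Concretely this is the statement that the equalizer of two maps out of $A*B$ which are each ``$\alpha$-derivations'' is a subalgebra, hence all of $A*B$ once it contains $A$ and $B$. This yields $\delta\in{\rm Der}_{\yd}(A*B,M)$, proving surjectivity, and the isomorphism is natural in $M$ by construction.

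The main obstacle I anticipate is the bookkeeping in the surjectivity step: one must phrase ``$\delta$ is $\alpha$-colinear'' as a condition that propagates multiplicatively, which requires checking that both $x\mapsto\delta(x)_{(0)}\otimes\delta(x)_{(1)}$ and $x\mapsto\delta(x_{(2)})\otimes S(x_{(1)})x_{(3)}$ obey $\Phi(xy)=\varepsilon(x)\Phi(y)+(\text{something involving }\Phi(x)\text{ and }\alpha(y))$ — i.e.\ identifying the correct bimodule-over-$\alpha$ structure on $(A*B)\otimes(A*B)$ relative to which colinear derivations form a subalgebra-closed condition. An alternative, perhaps cleaner, route that avoids this: combine Lemma \ref{lemm:deraugyd} with the adjunction of Proposition \ref{prop:adjyd} (restriction/induction for Yetter--Drinfeld modules along the coflat inclusions $A\subset A*B$ and $B\subset A*B$, which are coflat since everything is cosemisimple, and with $A*B$ free hence flat as a left $A$- and left $B$-module) together with the $\yd$-analogue of Lemma \ref{lemm:augfree} — namely that $(A*B)^+\simeq\big(A^+\otimes_A(A*B)\big)\oplus\big(B^+\otimes_B(A*B)\big)$ as Yetter--Drinfeld modules over $A*B$, which should follow from Lemma \ref{lemm:augfree} once one checks the isomorphism there is $A*B$-colinear. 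Either way the conceptual content is the same; I would present the direct-restriction argument as the primary proof since it parallels Lemma \ref{lemm:derfree} most transparently.
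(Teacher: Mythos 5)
Your proposal is correct and follows essentially the same route as the paper: restrict $d$ to $A$ and $B$ (noting that colinearity forces $d(a)_{(1)}\in A$, hence $d_{|A}$ lands in $M^{(A)}$), inherit injectivity from the plain-derivation case, and for surjectivity reuse the $\delta$ produced by the triangular-matrix/free-product argument of Lemma \ref{lemm:derfree}, checking it is colinear. The only difference is that the paper simply asserts $\delta\in{\rm Der}_{\yd}(A*B,M)$, whereas you supply the (correct) justification that both $x\mapsto\delta(x)_{(0)}\otimes\delta(x)_{(1)}$ and $x\mapsto\delta(x_{(2)})\otimes S(x_{(1)})x_{(3)}$ are derivations into $M\otimes(A*B)$ for a suitable module structure and agree on the generating subalgebras $A$ and $B$.
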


\begin{proof}
 Given $d \in {\rm Der}_{\yd}(A*B,M)$, it is immediate that $d_{|A} \in  {\rm Der}_{\yd}(A,M^{(A)})$, hence we have an injective linear map
\begin{align*}
 {\rm Der}_{\yd}(A*B,M)&\longrightarrow {\rm Der}_{\yd}(A,M^{(A)})\oplus {\rm Der}_{\yd}(B,M^{(B)}) \\
d & \longmapsto (d_{|A}, d_{|B})
\end{align*}
The proof of Lemma \ref{lemm:derfree} provides, for $(d,d') \in {\rm Der}_\yd(A,M^{(A)})\oplus {\rm Der}_\yd(B,M^{(B)})$, an element   $\delta \in {\rm Der}(A*B,M)$ such that $(\delta_{|A},\delta_{|B})=(d,d')$, with
$\delta \in {\rm Der}_{\yd}(A*B,M)$, so our map is surjective, and the proof is complete. 
\end{proof}

We now  describe the augmentation ideal of a free product, as a Yetter-Drinfeld module, using the induction functor from Subsection \ref{subsec:res}.




\begin{lemma}\label{lemm:augfreeyd}
Let $A,B$ be Hopf algebras. We have
$$(A*B)^+ \simeq \left(A^+\otimes_A (A*B)\right) \oplus \left(B^+\otimes_B (A*B)\right)$$
as Yetter-Drinfeld modules over $A*B$.
\end{lemma}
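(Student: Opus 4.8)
The plan is to mimic the proof of Lemma \ref{lemm:augfree} at the level of Yetter-Drinfeld modules, using the Yoneda lemma in the category $\yd_{A*B}^{A*B}$ together with the adjunction between induction and restriction established in Proposition \ref{prop:adjyd}. First I would record, for an arbitrary $N\in\yd_{A*B}^{A*B}$, the chain of natural isomorphisms
\begin{align*}
\Hom_{\yd_{A*B}^{A*B}}\bigl((A*B)^+,N\bigr)
&\simeq {\rm Der}_{\yd}(A*B,N)\\
&\simeq {\rm Der}_{\yd}(A,N^{(A)})\oplus {\rm Der}_{\yd}(B,N^{(B)})\\
&\simeq \Hom_{\yd_A^A}(A^+,N^{(A)})\oplus \Hom_{\yd_B^B}(B^+,N^{(B)})\\
&\simeq \Hom_{\yd_{A*B}^{A*B}}\bigl(A^+\otimes_A(A*B),N\bigr)\oplus \Hom_{\yd_{A*B}^{A*B}}\bigl(B^+\otimes_B(A*B),N\bigr)\\
&\simeq \Hom_{\yd_{A*B}^{A*B}}\bigl((A^+\otimes_A(A*B))\oplus(B^+\otimes_B(A*B)),N\bigr),
\end{align*}
where the first step is Lemma \ref{lemm:deraugyd}, the second is Lemma \ref{lemm:derfreeyd}, the third is again Lemma \ref{lemm:deraugyd} for $A$ and $B$, and the fourth is the adjunction isomorphism of Proposition \ref{prop:adjyd}. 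Then the Yoneda lemma, applied in $\yd_{A*B}^{A*B}$, yields the asserted isomorphism of Yetter-Drinfeld modules.

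For the adjunction step I need the hypotheses of Proposition \ref{prop:adjyd} to be met with $B$ replaced by the Hopf subalgebra $A\subset A*B$ (resp. $B\subset A*B$): namely that $A\subset A*B$ is coflat and that $A*B$ is flat as a left $A$-module. The second is immediate since $A*B$ is even free as a left $A$-module; the first holds because $A$, being cosemisimple, makes every inclusion with source $A$ coflat — or, more directly, because $A*B$ is free as a right $A$-comodule, so the functor $(-)^{(A)}\cong -\,\square_{A*B}A$ restricted from $A*B$-comodules is exact. (The same applies verbatim to $B$.) With coflatness in hand, Proposition \ref{prop:adjyd} gives exactly the natural bijection $\Hom_{\yd_{A*B}^{A*B}}(A^+\otimes_A(A*B),N)\simeq \Hom_{\yd_A^A}(A^+,N^{(A)})$ that the computation above requires.

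The one genuinely new point compared with the Hochschild case is checking that the bijection in Lemma \ref{lemm:derfreeyd} interacts correctly with the comodule structures, i.e. that when we glue $d$ and $d'$ into $\delta\in{\rm Der}(A*B,M)$ via the triangular-matrix-algebra construction of Lemma \ref{lemm:derfree}, the resulting $\delta$ is not merely a derivation but lies in ${\rm Der}_{\yd}(A*B,M)$; but this is already asserted in the statement of Lemma \ref{lemm:derfreeyd}, so I may invoke it. Thus the main (and essentially only) obstacle is the bookkeeping around the adjunction hypotheses and making sure all the isomorphisms displayed above are natural in $N$ as maps of $\yd_{A*B}^{A*B}$, so that Yoneda applies; the algebra itself is routine once Propositions \ref{prop:indyd}, \ref{prop:adjyd} and Lemmas \ref{lemm:deraugyd}, \ref{lemm:derfreeyd} are available.
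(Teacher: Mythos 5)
Your proof is correct and coincides with the paper's own argument: the same chain of natural isomorphisms via Lemmas \ref{lemm:deraugyd} and \ref{lemm:derfreeyd} and Proposition \ref{prop:adjyd}, followed by the Yoneda lemma in $\yd_{A*B}^{A*B}$. The only superfluous bookkeeping is your verification of the coflatness and flatness hypotheses: the Hom-level adjunction in Proposition \ref{prop:adjyd} holds without any such assumptions (they are only needed for the Ext-level statement), so that step requires no extra justification here.
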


\begin{proof}
 We have, for any  $M \in \yd_{A*B}^{A*B}$, using lemmas \ref{lemm:deraugyd}, \ref{lemm:derfreeyd}, and Proposition \ref{prop:adjyd}, 
\begin{align*}
 {\rm Hom}_{\yd_{A*B}^{A*B}}((A*B)^+,M) & \simeq {\rm Der}_{\yd}(A*B,M) \\
&  \simeq {\rm Der}_{\yd}(A,M^{(A)})\oplus {\rm Der}_{\yd}(B,M^{(B)}) \\
& \simeq {\rm Hom}_{\yd_A^A}(A^+,M^{(A)}) \oplus {\rm Hom}_{\yd_B^B}(B^+,M^{(B)}) \\
& \simeq {\rm Hom}_{\yd_{A*B}^{A*B}}\left(A^+\otimes_A (A*B),M\right) \oplus {\rm Hom}_{\yd_{A*B}^{A*B}}\left(B^+\otimes_B (A*B),M\right) \\
& \simeq {\rm Hom}_{\yd_{A*B}^{A*B}}\left((A^+\otimes_A (A*B))\oplus (B^+\otimes_B (A*B)),M\right) 
\end{align*}
We conclude by the Yoneda Lemma.
\end{proof}

\begin{proof}[Proof of Theorem \ref{thm:freegs}]
 Let $n\geq 2$ and let $M$ be a Yetter-Drinfeld module over $A*B$. We have, using Lemma \ref{lemm:shiftgs}, Lemma \ref{lemm:augfreeyd} and Proposition \ref{prop:adjyd} (using that $A,B\subset A*B$ is flat and coflat),
\begin{align*}
{\rm Ext }_{\yd_{A*B}^{A*B}}^n(\mathbb C, M) & \simeq {\rm Ext}^{n-1}_{\yd_{A*B}^{A*B}}( (A*B)^+, M) \\
& \simeq {\rm Ext}^{n-1}_{\yd_{A*B}^{A*B}}\left((A^+\otimes_A A*B) \oplus (B^+\otimes_B A*B), M\right) \\
& \simeq  {\rm Ext}^{n-1}_{\yd_{A*B}^{A*B}}(A^+\otimes_A (A*B), M) \oplus {\rm Ext}^{n-1}_{\yd_{A*B}^{A*B}}(B^+\otimes_B (A*B), M)\\
& \simeq  {\rm Ext}^{n-1}_{\yd_A^A}(A^+, M^{(A)}) \oplus {\rm Ext}^{n-1}_{\yd_B^B}(B^+, M^{(B)}) \\
& \simeq {\rm Ext}^{n}_{\yd_A^A}(\mathbb C, M^{(A)}) \oplus {\rm Ext}^{n}_{\yd_B^B}(\mathbb C, M^{(B)})
\end{align*}
The result then follows from the ${\rm Ext}$-description of Gerstenhaber-Schack cohomology.
\end{proof}






\subsection{Application to $H(F)$: proof of Theorem \ref{thm:comput}} Let $F \in {\rm GL}_n(\mathbb C)$be an asymmetry ($n \geq 2$), so that $F=E^tE^{-1}$ for some $E \in {\rm GL}_n(\mathbb C)$.
By Corollary \ref{cor:cdtw}, we have ${\rm cd}(H(F))={\rm cd}(\mathcal B(E) * \mathcal B(E))$. We have ${\rm cd}(\mathcal B(E))=3$ by \cite{bic}, hence we get ${\rm cd}(H(F))=3$ from Corollary \ref{cor:freecd}.

Assume now that $F$ is generic, so that $H(F)$ and $\mathcal B(E)$ are cosemisimple. Then Corollary \ref{cor:cdtw} yields ${\rm cd}_{\rm GS}(H(F))={\rm cd}_{\rm GS}(\mathcal B(E) * \mathcal B(E))$. 
We have ${\rm cd}_{\rm GS}(\mathcal B(E))=3$, by \cite{bic,bi16}, hence by Corollary \ref{cor:freecdgs}, we obtain ${\rm cd}_{\rm GS}(H(F))=3$.

Assume finally that $F$ is generic, but not necessarily an asymmetry. Then there exists a generic asymmetry
$F'$ such that the tensor categories of comodules over $H(F)$ and $H(F')$ are equivalent (see Section 2), hence 
the monoidal invariance of ${\rm cd}_{\rm GS}$ (see \cite{bi16}) and the previous discussion ensure
that ${\rm cd}_{\rm GS}(H(F))={\rm cd}_{\rm GS}(H(F'))=3$, as required

\begin{remark}
 Suppose again that $F$ is generic, but not an asymmetry. Since  ${\rm cd}_{\rm GS}(H(F)) \geq  {\rm cd}(H(F))$ (\cite{bi16}), we get  ${\rm cd}(H(F)) \leq 3$. We conjecture that this is an equality.
\end{remark}

\begin{remark}
Consider the case $F=I_n$, so that $H(I_n) =\mathcal O(U_n^+)$, the coordinate algebra on the free unitary quantum group $U_n^+$. It follows immediately from Theorem \ref{thm:comput} that $\beta_k^{(2)}(\widehat{U_n^+})=0$ for $k \geq 4$, where $\beta_k^{(2)}$ stands for the $k$-th $L^2$-Betti number \cite{ky08} of the dual discrete quantum group $\widehat{U_n^+}$. It was shown in \cite{ver} that $\beta_1^{(2)}(\widehat{U_n^+})\not=0$, and this result has been made more precise in the recent preprint \cite{kyra}, where it is shown that $\beta_1^{(2)}(\widehat{U_n^+})=1$.
\end{remark}



\section{Relations between cohomological dimensions}\label{sec:compare}

In this last section we come back to the problem of comparing the two cohomological dimensions. We prove the following slight generalization of \cite[Corollary 5.10]{bi16}.

\begin{theorem}\label{thm:comparecd}
If $A$ is a cosemisimple Hopf algebra with $S^4={\rm id}$, we have ${\rm cd}(A)={\rm cd}_{\rm GS}(A)$.
\end{theorem}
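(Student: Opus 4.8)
The inequality ${\rm cd}(A) \leq {\rm cd}_{\rm GS}(A)$ holds for any Hopf algebra by \cite{bi16}, so the real content is the reverse inequality. The idea is to reduce the $S^4 = {\rm id}$ case to the Kac type case $S^2 = {\rm id}$, for which the equality is already known (\cite[Corollary 5.10]{bi16} or the discussion in Section \ref{sec:ydgs}). The bridge is a \emph{cocentral exact sequence} associated to the order of $S^2$. Concretely, when $S^4 = {\rm id}$, the Hopf algebra automorphism $S^2$ is an involution, generating an action of $\mathbb Z_2$ (or the trivial group, if $S^2 = {\rm id}$ already, in which case there is nothing to prove) on $A$, and this action together with a suitable cocentral character should fit $A$ into a cocentral exact sequence
$$\C \to A_0 \rightarrow A \rightarrow \C \mathbb Z_2 \to \C$$
whose ``degree zero'' part $A_0$ is a cosemisimple Hopf algebra of Kac type, i.e. with $S^2 = {\rm id}$ on $A_0$.

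First I would make precise the grading: since $S^2$ is a coalgebra automorphism, one shows that the eigenspace decomposition of $A$ under $S^2$ (eigenvalues $\pm 1$, as $S^4 = {\rm id}$ and we are in characteristic zero) is compatible with the coalgebra structure and defines a $\mathbb Z_2$-grading of $A$ as a Hopf algebra, with $A_0 = \{a : S^2(a) = a\}$ a Hopf subalgebra. The subtle point is to identify this grading with one coming from a cocentral Hopf algebra surjection $p : A \to \C\mathbb Z_2$ — equivalently, to check that the $\mathbb Z_2$-grading is \emph{central} in the appropriate sense. Here I would invoke the structure of cosemisimple Hopf algebras: $S^2$ is implemented by conjugation against a grouplike-like element of the dual (the ``modular'' element), and because $S^4 = {\rm id}$ this element squares to something central, giving exactly the required cocentral character. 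Once the cocentral exact sequence is in place, Theorem \ref{thm:cdexactcocentral} gives ${\rm cd}_{\rm GS}(A) = {\rm cd}_{\rm GS}(A_0)$ and ${\rm cd}(A) = {\rm cd}(A_0)$, and since $A_0$ has $S^2 = {\rm id}$ we already know ${\rm cd}(A_0) = {\rm cd}_{\rm GS}(A_0)$; combining gives ${\rm cd}(A) = {\rm cd}_{\rm GS}(A)$.

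The step I expect to be the main obstacle is precisely establishing \emph{cocentrality} of the resulting map $p : A \to \C\mathbb Z_2$ — i.e. that $S^2$ arises from a cocentral grading and not merely an arbitrary one. For general cosemisimple Hopf algebras with $S^4 = {\rm id}$ this requires knowing that $S^2$ is an inner automorphism implemented by a grouplike element of $A^\circ$ lying in the center of the ``function algebra on the group of grouplikes'' — this is classical (the existence of such an implementing element for cosemisimple $A$ follows from the theory of the modular element / Larson–Radford type results adapted to the cosemisimple setting), and the condition $S^4 = {\rm id}$ forces this element to have order dividing $2$ modulo the relevant center, which is what produces the $\C\mathbb Z_2$ quotient rather than a $\C\mathbb Z_m$ for larger $m$. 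I would check carefully that the exact sequence produced is genuinely exact in the sense recalled in Section \ref{sec:gradim} (injectivity of the inclusion, the $\ker p = A_0^+ A$ condition, and $A_0 = A^{{\rm co}\,\C\mathbb Z_2}$), all of which are automatic for a Hopf algebra grading by a finite group once $A_0$ is identified as the neutral component. After that, the rest is a formal application of the already-proved Theorem \ref{thm:cdexactcocentral}.
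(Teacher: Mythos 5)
Your reduction to the Kac type case does not work, and the obstruction is exactly at the point you flag as "the main obstacle". First, the degree-zero part you propose, $A_0=\{a : S^2(a)=a\}$, is in general not a subcoalgebra, hence not a Hopf subalgebra: since $S^2$ is a coalgebra map, the eigenspace decomposition $A=A_+\oplus A_-$ only satisfies $\Delta(A_+)\subseteq A_+\otimes A_+ + A_-\otimes A_-$, which is an algebra grading but not a grading of the kind induced by a cocentral surjection (for those, each homogeneous component is a subcoalgebra). Concretely, for $A=H(I_{m,n})$ one has $S^2(u_{ij})=\epsilon_i\epsilon_j u_{ij}$ with $\epsilon_i=\pm 1$, and $\Delta(u_{ij})=\sum_k u_{ik}\otimes u_{kj}$ visibly mixes the two eigenspaces. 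Second, even if you replace $A_0$ by the degree-zero part of an actual cocentral $\C\ZZ_2$-quotient, there is no reason for it to be of Kac type, and for $H(I_{m,n})$ it is not: the only cocentral surjection onto $\C\ZZ_2$ sends $u_{ij}, v_{ij}\mapsto \delta_{ij}g$, its kernel contains the coefficient algebra of the simple subcomodule of $u\otimes v$ complementary to the trivial one, and on that coefficient space $S^2$ is conjugation by the restriction of ${\rm diag}(\epsilon)\otimes{\rm diag}(\epsilon)$, which is not scalar (it has eigenvalue $-1$ with multiplicity $2mn$). Since $H(I_{m,n})$ is precisely the example the theorem is applied to in Section \ref{sec:compare}, the proposed reduction cannot prove the statement. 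Finally, the structural input you invoke --- that $S^2$ is implemented by a grouplike element of the dual --- is not available: the modular functional $\psi$ with $S^2=\psi\ast{\rm id}\ast\psi^{-1}$ is not known to be an algebra map for general cosemisimple Hopf algebras (the paper points out this is a special case of an open question in \cite{egno}).

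The paper's proof goes a different way and makes no attempt to change the Hopf algebra. It only needs to show ${\rm cd}_{\rm GS}(A)\leq {\rm cd}(A)=m$: take a projective resolution of $\C$ in $\yd_A^A$, truncate at step $m$ to get a kernel $K$ with an $A$-linear retraction $r$ of the inclusion $i$ (this uses only that the $P_k$ are projective as $A$-modules), and then upgrade $r$ to a morphism of Yetter--Drinfeld modules by applying the Haar-integral averaging operator $M(r)(v)=h\bigl(r(v_{(0)})_{(1)}S(v_{(1)})\bigr)r(v_{(0)})_{(0)}$, which always produces a colinear map. The whole point of the hypothesis $S^4={\rm id}$ is Lemma \ref{lemm:modu}: it forces $\psi\ast\psi=\varepsilon$ and hence $h(S(a_{(1)})xa_{(2)})=h(x)\varepsilon(a)$, which is exactly what is needed to check that $M(r)$ remains $A$-linear (via the Yetter--Drinfeld compatibility). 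If you want to salvage your proposal, you would have to prove that every cosemisimple Hopf algebra with $S^4={\rm id}$ admits a cocentral exact sequence with Kac type kernel and finite abelian quotient, and the computation above shows this is false.
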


Of course, a generalization of this theorem to the arbitrary cosemisimple case would make trivial the proof of the second part of Theorem \ref{thm:comput}.

Before proving this result, we need to recall some facts on the structure of cosemisimple Hopf algebras, see \cite[Chapter 11]{ks} for example. Let $A$ be a cosemisimple Hopf algebra with Haar integral $h : A \rightarrow \mathbb C$. There exists a convolution invertible linear map $\psi : A \rightarrow \mathbb C$ such that 
\begin{enumerate}
 \item $S^2= \psi * {\rm id}_A * \psi^{-1}$,
\item $\psi \circ S=\psi^{-1}$,
\item $\sigma = \psi * {\rm id}_{A} * \psi$ is an algebra endomorphism of $A$, and $h(ab)=h(b\sigma(a))$ for any $a,b \in A$.
\end{enumerate}

In all the known examples,  the map $\psi$ above can be chosen to be an algebra map, so that the second condition is automatic, but it is always
 unknown whether this can always be done (this is a particular case of Question 4.8.3 in \cite{egno}).
We call such a map $\psi$ a modular functional on $A$. 

\begin{lemma}\label{lemm:modu}
 Let $A$ be cosemisimple Hopf algebra with Haar integral $h$ and modular functional $\psi$.
We have for any $a,x \in A$
$$h(S(a_{(1)})xa_{(2)}) = \psi^{-1}(a_{(2)})\psi^{-1}(a_{(3)})h\left(xa_{(4)}S^{-1}(a_{(1)})\right)
$$ 
$$h(S(a_{(2)})xS^2(a_{(1)})) = \psi^{-1}(a_{(2)})\psi^{-1}(a_{(3)})h\left(a_{(1)}S(a_{(4)})x\right)$$
In particular, if $S^4={\rm id}$, we have $\psi*\psi= \varepsilon$ and
$$h(S(a_{(1)})xa_{(2)}) 
=h(x)\varepsilon(a)= h(S(a_{(2)})xS^2(a_{(2)}))$$
\end{lemma}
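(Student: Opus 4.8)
\textbf{Proof plan for Lemma \ref{lemm:modu}.}

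The plan is to start from property (3) of the modular functional, which gives $h(ab) = h(b\sigma(a))$ with $\sigma = \psi * \id_A * \psi$, and to compute $h(S(a_{(1)})xa_{(2)})$ by sliding the left factor $S(a_{(1)})$ to the right using this trace-like identity. Concretely I would write $h(S(a_{(1)})xa_{(2)}) = h\big( x a_{(2)} \sigma(S(a_{(1)}))\big)$ and then expand $\sigma(S(a_{(1)})) = \psi(S(a_{(1)}))\, S(a_{(2)})\, \psi(S(a_{(3)}))$ after a further coproduct, reindexing so that the $\psi$-values attach to the correct legs; using property (2), $\psi \circ S = \psi^{-1}$, turns $\psi(S(\cdot))$ into $\psi^{-1}(\cdot)$, and the surviving term $a_{(\text{mid})} S(a_{(\text{last}})}$ collapses appropriately via the antipode axioms to leave the factor $a_{(4)} S^{-1}(a_{(1)})$ inside $h$, with the two scalars $\psi^{-1}(a_{(2)})\psi^{-1}(a_{(3)})$ outside. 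I expect the Sweedler bookkeeping here — keeping track of which of the six resulting legs of $a$ carries a $\psi^{-1}$, which gets hit by $S$ or $S^{-1}$, and which cancel against each other — to be the only real subtlety; everything else is formal.

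For the second identity I would apply $S^2 = \psi * \id_A * \psi^{-1}$ to rewrite $S^2(a_{(1)})$ in terms of $a$, or alternatively apply the first identity with $a$ replaced by $S(a)$ (using $S$ anti-comultiplicative and $\psi \circ S = \psi^{-1}$, so $\psi^{-1}\circ S = \psi$), and then simplify $h(S(S(a)_{(1)}) x S(a)_{(2)}) = h(S^2(a_{(2)}) x S(a_{(1)}))$ — note this is not quite the displayed left-hand side, so I would instead work directly: expand $S^2(a_{(1)}) = \psi(a_{(1)}) a_{(2)} \psi^{-1}(a_{(3)})$, substitute into $h(S(a_{(2)}) x S^2(a_{(1)}))$, reindex, and again use the trace property of $h$ together with the antipode axioms to bring it to the form $\psi^{-1}(a_{(2)})\psi^{-1}(a_{(3)}) h(a_{(1)} S(a_{(4)}) x)$. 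The guiding principle throughout is that $h$ is a twisted trace and $\psi$ diagonalizes $S^2$, so all manipulations are dictated by these two facts.

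For the final "in particular" statement, assume $S^4 = \id$. Then $S^2 = \psi * \id * \psi^{-1}$ has order $2$, and iterating gives $\id = S^4 = (\psi * \psi) * \id * (\psi*\psi)^{-1}$; combined with the uniqueness/centrality of such a diagonalizing functional — or more directly, since $\psi \circ S = \psi^{-1}$ forces $\psi = \psi \circ S^4 = \psi^{-1}*\psi^{-1}*\psi^{-1}$... — I would argue that $\psi * \psi$ is a central grouplike-type functional that must equal $\varepsilon$; the cleanest route is to observe $\psi^{-1} = \psi \circ S$ and $\psi^{-1}\circ S = \psi \circ S^2$, so $\psi \circ S^2 = \psi^{-1}\circ S = (\psi\circ S)^{-1}\circ S$, and push this through to get $(\psi*\psi)\circ S^2 = \psi * \psi$ and then $\psi * \psi = \varepsilon$ using $S^4 = \id$. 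Granting $\psi * \psi = \varepsilon$, i.e. $\psi^{-1} = \psi$, the first displayed identity becomes $h(S(a_{(1)}) x a_{(2)}) = \psi(a_{(2)})\psi(a_{(3)}) h(x a_{(4)} S^{-1}(a_{(1)}))$; applying $S^{-1} = S$ (since $S^2$ has order 2, wait — rather $S^{-1} = S^3$, and under $S^4=\id$ one simplifies using $S^2$) and using that the $\psi$ factors now combine as $\psi * \psi = \varepsilon$ after reabsorbing, the right-hand side collapses to $h(x)\varepsilon(a)$; the same substitution in the second identity gives $h(S(a_{(2)})x S^2(a_{(1)})) = h(x)\varepsilon(a)$ as well. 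The main obstacle I anticipate is justifying $\psi * \psi = \varepsilon$ rigorously from $S^4 = \id$ together with the constraints (1) and (2), since $\psi$ need not be an algebra map; I would handle this by carefully exploiting $\psi \circ S = \psi^{-1}$ and the order-two-ness of $S^2$ at the level of convolution, reducing to a statement about the grouplike elements of the dual or invoking the analogous argument already in \cite{bi16}.
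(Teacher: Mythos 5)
Your plan for the first displayed identity is essentially the paper's computation: apply the twisted trace property to move $S(a_{(1)})$ to the right as $\sigma(S(a_{(1)}))$, expand $\sigma=\psi*\id*\psi$, and use $\psi\circ S=\psi^{-1}$. But be aware that after these two steps you sit at $\psi^{-1}(a_{(1)})\psi^{-1}(a_{(3)})\,h\bigl(xa_{(4)}S(a_{(2)})\bigr)$, which still contains $S$, not $S^{-1}$, and the $\psi^{-1}$'s are on the wrong legs; the step you wave at as ``the antipode axioms'' is really the substitution $S=\psi*S^{-1}*\psi^{-1}$ (a consequence of property (1)), followed by a cancellation $\psi^{-1}(a_{(1)})\psi(a_{(2)})=\varepsilon(a_{(1)})$. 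That is the one non-formal move in the computation and it should be named. For the second identity, the paper gets it in one line from the first by using $h\circ S=h$ (apply $S$ inside $h$ and replace $x$ by $S(x)$); your proposed direct route --- expanding $S^2(a_{(1)})=\psi(a_{(1)})a_{(2)}\psi^{-1}(a_{(3)})$ and substituting --- leaves you with $h(S(a_{(4)})xa_{(2)})$ whose Sweedler legs are in the wrong order to reuse anything, so it is not clear it closes.

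The genuine gap is the claim $\psi*\psi=\varepsilon$ under $S^4=\id$, which you yourself flag as the main obstacle. Your two proposed arguments do not suffice: (i) from $S^4=(\psi*\psi)*\id*(\psi*\psi)^{-1}$ you correctly deduce that $\psi*\psi$ convolution-commutes with $\id_A$, but commuting with $\id_A$ does not force a functional to equal $\varepsilon$ (for a commutative Hopf algebra every functional commutes with $\id_A$); and (ii) the identity $(\psi*\psi)\circ S^2=\psi*\psi$ holds for \emph{every} modular functional, with or without $S^4=\id$, since $\psi\circ S^2=(\psi\circ S)\circ S=\psi^{-1}\circ S=\psi$ always, so it carries no information. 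There is also no uniqueness of $\psi$ to invoke. The missing idea, which is the actual content of the paper's argument, is to combine the commutation from (i) with the $x=1$ specialization of the first identity: writing $\phi=\psi^{-1}*\psi^{-1}$, that specialization reads $\varepsilon(a)=\phi(a_{(2)})\,h\bigl(a_{(3)}S^{-1}(a_{(1)})\bigr)$; since $\phi$ commutes with $\id_A$ one may slide it to the last leg, and then $a_{(2)}S^{-1}(a_{(1)})=\varepsilon(a)1$ collapses the integral to give $\varepsilon=\phi$, i.e.\ $\psi*\psi=\varepsilon$. Without this use of the first identity the ``in particular'' statement is unproved, and the final two equalities (which then follow by the same collapse $a_{(2)}S^{-1}(a_{(1)})=\varepsilon(a)1$) are left hanging.
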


\begin{proof}
 We have, using that $S=\psi*S^{-1}*\psi^{-1}$,
 \begin{align*}
  h(S(a_{(1)})x(a_{(2)})) & = h(xa_{(2)}\sigma S(a_{(1)})) \\
& = h\left(xa_{(4)}\psi^{-1}(a_{(3)})S(a_{(2)})\psi^{-1}(a_{(1)})\right) \\
&= h\left(xa_{(6)}\psi^{-1}(a_{(5)})\psi(a_{(2)})S^{-1}(a_{(3)})\psi^{-1}(a_{(4)})\psi^{-1}(a_{(1)})\right) \\
& =  h\left(xa_{(4)}\psi^{-1}(a_{(3)})\psi^{-1}(a_{(2)})S^{-1}(a_{(1)})\right)\\
&= \psi^{-1}(a_{(2)})\psi^{-1}(a_{(3)})h\left(xa_{(4)}S^{-1}(a_{(1)})\right)
 \end{align*}
The second identity is obtained from the first one using that $hS=h$. At $x=1$, we get in particular
$$\varepsilon(a)=\psi^{-1}(a_{(2)})\psi^{-1}(a_{(3)})h\left(a_{(4)}S^{-1}(a_{(1)})\right)$$ 
If $S^{4}={\rm id}$, then $\psi * \psi$ convolution commutes with ${\rm id}_A$, hence we indeed see from the previous identity that that $\psi*\psi=\varepsilon$, and the last identities follow directly.
\end{proof}



\begin{remark}
The last condition in the lemma does not hold in general. 
For example it does not hold for $\mathcal O_q({\rm SL}_2(\mathbb C))$ if $q \not =\pm 1$.
\end{remark}

\begin{proposition}
 Let $V,W$ be Yetter-Drinfeld modules over the cosemisimple Hopf algebra $A$ satisfying $S^4={\rm id}$, let $i : W \rightarrow V$ be an injective morphism of Yetter-Drinfeld modules, and let $r : V \rightarrow W$ be an $A$-linear map such that 
$r i ={\rm id}_W$. Then there exists a morphism of Yetter-Drinfeld modules $\tilde{r} : V \rightarrow W$ such that $\tilde{r}i={\rm id}_W$. 
\end{proposition}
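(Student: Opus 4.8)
The plan is to average the $A$-linear splitting $r$ against the modular data to produce one that is also $A$-colinear, exactly mirroring the strategy of Proposition \ref{prop:retract} but with $\mathbb C\Gamma$-integrals replaced by the Haar integral $h$ and the modular functional $\psi$. Concretely, using the equivalence ${\rm Hom}_A(V,W)={\rm Hom}(V,W)^A$ under the adjoint action $f\cdot a(v)=f(v\cdot S(a_{(1)}))\cdot a_{(2)}$, I would first note that the space of $A$-colinear-and-$A$-linear maps is the subspace of $A$-colinear $f$ that are also $A$-invariant, and since $A$ is cosemisimple one can project onto colinear maps using $h$. So I would define
\begin{equation*}
\tilde r(v) = \psi^{-1}(v_{(-1)})\,r(v_{(0)})\cdot \text{(something built from the coaction)} ,
\end{equation*}
more precisely an expression that, written on the coaction side, integrates $r$ against $h$ to kill the non-colinear part; the modular functional enters precisely to compensate for $S^2\neq{\rm id}$, and the hypothesis $S^4={\rm id}$ makes $\psi*\psi=\varepsilon$ (Lemma \ref{lemm:modu}), which is what will make the bookkeeping close.

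In more detail, I would proceed in the following steps. First, recall that for a Yetter-Drinfeld module $M$ the coaction $\alpha_M$ together with $h$ gives a colinear projection $M\to M^{{\rm co}A}$-type averaging on ${\rm Hom}$-spaces; apply this to $r\in{\rm Hom}_A(V,W)$ to get a candidate $\tilde r$ which is manifestly $A$-colinear. Second, check that the averaging does not destroy $A$-linearity: here is where the identities $h(S(a_{(1)})xa_{(2)})=h(x)\varepsilon(a)$ and $h(S(a_{(2)})xS^2(a_{(1)}))=h(x)\varepsilon(a)$ from Lemma \ref{lemm:modu} are used to push the $A$-action through the integral. Third, check $\tilde r i={\rm id}_W$: because $i$ is already both $A$-linear and $A$-colinear, applying the averaging operator to $r i={\rm id}_W$ and using $h(1)=1$ (normalisation of the Haar integral) and $\psi*\psi=\varepsilon$ should give $\tilde r i={\rm id}_W$ on the nose. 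Fourth, assemble: $\tilde r$ is then a morphism of Yetter-Drinfeld modules splitting $i$.

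The main obstacle I expect is getting the exact form of $\tilde r$ right so that all three properties ($A$-colinear, $A$-linear, and splitting $i$) hold simultaneously. Averaging over the coaction with $h$ automatically gives colinearity but a priori spoils linearity; the correction terms involving $\psi^{-1}$ are exactly calibrated so that the Yetter-Drinfeld compatibility turns the failure of linearity into something proportional to $h(S(a_{(1)})xa_{(2)})$, which then collapses to $h(x)\varepsilon(a)$ \emph{only} because $S^4={\rm id}$. So the computation in the second step is the delicate one: one has to expand $\tilde r(v\cdot a)$, move the $a$ past $r$ using $A$-linearity of $r$, then past the coaction using the Yetter-Drinfeld axiom for both $V$ and $W$, and finally invoke Lemma \ref{lemm:modu} to simplify. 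I would also double-check that the normalisation constant works out, i.e. that no spurious factor of the form $h(\text{something})\neq 1$ appears; this is where the precise statement $\psi*\psi=\varepsilon$ (rather than merely "$\psi*\psi$ is central") is needed. Everything else (colinearity, splitting) should be routine once the formula is pinned down.
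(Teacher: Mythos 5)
Your proposal is correct and takes essentially the same route as the paper: the paper sets $\tilde r = M(r)$ where $M(f)(v)= h\left(f(v_{(0)})_{(1)}S(v_{(1)})\right)f(v_{(0)})_{(0)}$ is the plain Haar-integral colinearization operator, notes $\tilde r i={\rm id}_W$ immediately, and verifies $A$-linearity by expanding $\tilde r(v\cdot a)$ with the Yetter--Drinfeld condition and invoking Lemma \ref{lemm:modu}. The one small deviation is that $\psi^{-1}$ does not appear in the formula for $\tilde r$ at all; the modular functional enters only inside Lemma \ref{lemm:modu}, where $S^4={\rm id}$ forces $h\left(S(a_{(2)})xS^2(a_{(1)})\right)=h(x)\varepsilon(a)$, which is exactly the collapse you predicted.
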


\begin{proof}
  Let $h$ be the Haar integral on $A$. Recall that for any $A$-comodules $V$ and $W$, we have a surjective averaging operator 
\begin{align*}
 M : {\rm Hom}(V,W) & \longrightarrow {\rm Hom}^A(V,W) \\
f & \longmapsto M(f), \ M(f)(v)= h\left(f(v_{(0)})_{(1)}S(v_{(1)})\right)f(v_{(0)})_{(0)}
\end{align*}
with $f \in {\rm Hom}^A(V,W)$ if and only if $M(f)=f$. We put $\tilde{r} = M(r)$, and it is straightforward to check that $\tilde{r}i={\rm id}_W$. It remains to check that $\tilde r$ is $A$-linear. We have, using the Yetter-Drinfeld condition and the $A$-linearity of $r$,
\begin{align*}
 \tilde{r}(v \cdot a) & = h\left(r((v\cdot a)_{(0)})_{(1)}S((v\cdot a)_{(1)})\right)r((v\cdot a)_{(0)})_{(0)}\\
& = h\left(r(v_{(0)}\cdot a_{(2)})_{(1)}S(S(a_{(1)})v_{(1)}a_{(3)})\right)r(v_{(0)} \cdot a_{(2)})_{(0)}\\
& = h\left((r(v_{(0)})\cdot a_{(2)})_{(1)}S(S(a_{(1)})v_{(1)}a_{(3)})\right)(r(v_{(0)}) \cdot a_{(2)})_{(0)}\\
& = h\left(S(a_{(2)})r(v_{(0)})_{(1)}a_{(4)}S(S(a_{(1)})v_{(1)}a_{(5)})\right)r(v_{(0)})_{(0)} \cdot a_{(3)}\\
& = h\left(S(a_{(2)})r(v_{(0)})_{(1)}a_{(4)}S(a_{(5)})S(v_{(1)})S^2(a_{(1)})\right)r(v_{(0)})_{(0)} \cdot a_{(3)}\\
& = h\left(S(a_{(2)})r(v_{(0)})_{(1)}S(v_{(1)})S^2(a_{(1)})\right)r(v_{(0)})_{(0)} \cdot a_{(3)}
\end{align*}
Thus, if $S^4={\rm id}$, Lemma \ref{lemm:modu} ensures that
\begin{align*}
 \tilde{r}(v \cdot a) & = h\left(S(a_{(2)})r(v_{(0)})_{(1)}S(v_{(1)})S^2(a_{(1)})\right)r(v_{(0)})_{(0)} \cdot a_{(3)} \\
& = h\left(r(v_{(0)})_{(1)}S(v_{(1)})\right)r(v_{(0)})_{(0)} \cdot a \\
&= \tilde{r}(v) \cdot a
\end{align*}
and hence $\tilde{r}$ is $A$-linear.
\end{proof}

\begin{proof}[Proof of Theorem \ref{thm:comparecd}]
 We already know that  ${\rm cd}(A)\leq{\rm cd}_{\rm GS}(A)$, and to prove the equality we can assume that $m={\rm cd}(A)$ is finite. Consider a resolution of the trivial Yetter-Drinfeld module
$$\cdots  \rightarrow P_n \rightarrow P_{n-1} \rightarrow \cdots\rightarrow  P_1 \rightarrow P_0 \rightarrow \mathbb C$$
by projective Yetter-Drinfeld modules over $A$. These are in particular projective as $A$-modules, so since $m={\rm cd}(A)$, a standard argument, once again, yields an exact sequence of Yetter-Drinfeld modules over $A$
$$0 \rightarrow K \overset{i}\rightarrow P_m \rightarrow P_{m-1} \rightarrow \cdots\rightarrow  P_1  \rightarrow P_0 \rightarrow \mathbb C$$
together with $r : P_m  \rightarrow K$, an $A$-linear map such that $ri = {\rm id}_K$. The previous proposition yields a morphism of Yetter-Drinfeld module $\tilde{r} : P_m \rightarrow K$ such that $\tilde{r}i={\rm id}_K$.
 We thus obtain, since a direct summand of a projective is projective, a length $m$ resolution of $\mathbb C$ by projective Yetter-Drinfeld modules over $A$, and we conclude that ${\rm cd}_{\rm GS}(A)\leq m$, as required.
\end{proof}

We get the following generalization of \cite[Corollary 5.11]{bi16}, with the same proof.

\begin{corollary}\label{cor:invcdS4}
 Let  $A$ and $B$ be cosemisimple Hopf algebras  such that there exists an equivalence of linear tensor categories
 $\mathcal M^A \simeq^{\otimes} \mathcal M^B$. If the antipode of $A$ satisfies $S^4={\rm id}$, then we have ${\rm cd}(A) \geq {\rm cd}(B)$, and if the antipodes of $A$ and $B$ both satisfy $S^4={\rm id}$, then ${\rm cd}(A) = {\rm cd}(B)$.
\end{corollary}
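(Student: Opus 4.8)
The statement to prove is Corollary~\ref{cor:invcdS4}: if $A$ and $B$ are cosemisimple Hopf algebras with $\mathcal M^A \simeq^\otimes \mathcal M^B$ and $A$ has antipode satisfying $S^4 = \mathrm{id}$, then $\mathrm{cd}(A) \geq \mathrm{cd}(B)$; if both antipodes satisfy $S^4 = \mathrm{id}$, then equality holds.

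The plan is to combine Theorem~\ref{thm:comparecd} with the monoidal invariance of $\mathrm{cd}_{\rm GS}$ already recorded in the excerpt. Specifically, recall from Section~\ref{sec:ydgs} (the bullet points after the definition of $\mathrm{cd}_{\rm GS}$) that an equivalence $\mathcal M^A \simeq^\otimes \mathcal M^B$ of tensor categories induces an equivalence of the Yetter--Drinfeld categories, hence $\mathrm{cd}_{\rm GS}(A) = \mathrm{cd}_{\rm GS}(B)$, and moreover $\max(\mathrm{cd}(A), \mathrm{cd}(B)) \leq \mathrm{cd}_{\rm GS}(A) = \mathrm{cd}_{\rm GS}(B)$.

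First I would invoke Theorem~\ref{thm:comparecd}: since $A$ is cosemisimple with $S^4 = \mathrm{id}$, we have $\mathrm{cd}(A) = \mathrm{cd}_{\rm GS}(A)$. Then, using the monoidal invariance just recalled, $\mathrm{cd}_{\rm GS}(A) = \mathrm{cd}_{\rm GS}(B) \geq \mathrm{cd}(B)$, where the last inequality is the general bound $\mathrm{cd} \leq \mathrm{cd}_{\rm GS}$ from \cite{bi16}. Chaining these gives $\mathrm{cd}(A) = \mathrm{cd}_{\rm GS}(A) = \mathrm{cd}_{\rm GS}(B) \geq \mathrm{cd}(B)$, which is the first assertion. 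For the second assertion, if in addition the antipode of $B$ satisfies $S^4 = \mathrm{id}$, apply Theorem~\ref{thm:comparecd} to $B$ as well to get $\mathrm{cd}(B) = \mathrm{cd}_{\rm GS}(B) = \mathrm{cd}_{\rm GS}(A) = \mathrm{cd}(A)$, so the inequality becomes an equality.

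There is essentially no obstacle here: the corollary is a formal consequence of Theorem~\ref{thm:comparecd} together with facts quoted verbatim in the excerpt. The only point requiring a word of care is that $\mathrm{cd}_{\rm GS}$ is genuinely a monoidal invariant, i.e.\ depends only on the tensor category $\mathcal M^A$ and not on the Hopf algebra structure; this is precisely the second bullet point after the definition of $\mathrm{cd}_{\rm GS}$ and is attributed to \cite{bic,bi16}, so it may be cited directly. Hence the proof is a two-line chain of (in)equalities, and one should simply write ``the proof is identical to that of \cite[Corollary 5.11]{bi16}, replacing the hypothesis $S^2 = \mathrm{id}$ by $S^4 = \mathrm{id}$ and \cite[Corollary 5.10]{bi16} by Theorem~\ref{thm:comparecd}.''
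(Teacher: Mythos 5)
Your proof is correct and is exactly the argument the paper intends: the paper simply says the corollary follows ``with the same proof'' as \cite[Corollary 5.11]{bi16}, namely chaining Theorem \ref{thm:comparecd} with the monoidal invariance $\max({\rm cd}(A),{\rm cd}(B))\leq {\rm cd}_{\rm GS}(A)={\rm cd}_{\rm GS}(B)$ recorded in Section \ref{sec:ydgs}. Nothing is missing.
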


\begin{example}
As an application  of Theorem \ref{thm:comparecd}, consider, for $m,n \geq 1$, the  $(m+n) \times (m+n)$ matrix
$$I_{m,n}= \begin{pmatrix}
            I_m & 0 \\
0 & -I_n
           \end{pmatrix}$$
We have $S^4={\rm id}$ for the Hopf algebra $H(I_{m,n})$, since $I_{m,n}^2=I_{m+n}$, and $S^2\not={\rm id}$. For $q$ satisfying $q^2-(m-n)q +1=0$, we have $\mathcal M^{H(I_{m,n})} \simeq^\otimes \mathcal M^{H(q)}$. Hence $H(I_{m,n})$ is cosemisimple if $|m-n|\geq 2$, and in this case we have
$${\rm cd}(H(I_{m,n}))= {\rm cd}_{\rm GS}(H(I_{m,n}))={\rm cd}_{\rm GS}(H(q))=3$$
while $I_{m,n}$ is not an asymmetry if $n$ is odd.
\end{example}

To conclude, it is interesting to note that the question of a generalization of Corollary \ref{cor:invcdS4} (positive answer to Question 1.1 in \cite{bi16}) is studied as well in the recent preprint \cite{wyz}, in the setting of Hopf algebras having an homological duality, with interesting partial positive answers.



\end{document}